\documentclass[11pt]{article}
\usepackage[margin=1.1in]{geometry}
\usepackage{amsmath,amssymb,amsthm}
\usepackage{booktabs,array}
\usepackage{graphicx}
\usepackage[hidelinks]{hyperref}
\hypersetup{pdftitle={Recurrent two-dimensional sequences over three and four letters simulating von Neumann constructions of finite ordinals},
pdfauthor={Bogdan Dumitru, Xiaofang Jiang, Mihai Prunescu}}
\newtheorem{thm}{Theorem}
\newtheorem{lem}[thm]{Lemma}
\newtheorem{prop}[thm]{Proposition}
\newtheorem{cor}[thm]{Corollary}
\theoremstyle{definition}
\newtheorem{defn}[thm]{Definition}
\theoremstyle{remark}
\newtheorem{rem}[thm]{Remark}

\newenvironment{alphenum}{\begin{enumerate}}{\end{enumerate}}
\newenvironment{romenum}{\begin{enumerate}}{\end{enumerate}}
\newenvironment{Denum}[1]{\begin{enumerate}\setcounter{enumi}{#1}}{\end{enumerate}}
\newenvironment{Fenum}[1]{\begin{enumerate}\setcounter{enumi}{#1}}{\end{enumerate}}
\newenvironment{Penum}{\begin{enumerate}}{\end{enumerate}}

\newcommand{\F}{\mathbb{F}_3}
\newcommand{\N}{\mathbb{N}}

\newcommand{\cl}[2]{[#1,#2]}
\newcommand{\Lam}{\Lambda}
\newcommand{\Gam}{\Gamma}
\newcommand{\B}{\mathsf{B}}
\newcommand{\cE}{\mathcal{E}}
\newcommand{\cB}{\mathfrak{B}}
\newcommand{\cx}{c_{x}}
\newcommand{\cy}{c_{y}}
\newcommand{\cz}{c_{z}}
\newcommand{\trip}[3]{(#1,#2,#3)}

\begin{document}
\title{Recurrent two-dimensional sequences over three and four letters simulating von Neumann constructions of finite ordinals} 
\author{Xiaofang Jiang\footnote{School of Mathematical Sciences and Hebei Center for Applied
Mathematics, Hebei Normal University, Shijiazhuang, China. E-mail: \texttt{jiangxf@hebtu.edu.cn}.}
\and Mihai Prunescu\footnote{Research Center for Logic, Optimization and
Security (LOS), Faculty of Mathematics and Computer Science, University of
Bucharest, Academiei 14, Bucharest (RO-010014), Romania.}\,\,\footnote{Simion
Stoilow Institute of Mathematics of the Romanian Academy, Research unit 5,
P. O. Box 1-764, Bucharest (RO-014700), Romania.
E-mail: \texttt{mihai.prunescu@imar.ro}.} \\
\and {Bogdan Dumitru}\footnote{University of Bucharest, Academiei 14,
Bucharest (RO-010014), Romania.\\
E-mail: \texttt{bogdan.dumitru@fmi.unibuc.ro}.}}
\date{}
\maketitle

\begin{abstract}
We study three recurrent double sequences whose coloured diagrams encode mirrored
versions of the finite-ordinal construction. Two sequences use three letters and
realise variants with two and three initial symbols. A four-letter sequence realises
the von Neumann construction from one initial symbol. For each sequence we describe
its cells explicitly and identify the ordered nesting of its rectangles. For four
letters, we distinguish ordinary and virtual boxes and prove their assembly rules.
The proofs proceed by induction along anti-diagonals. For the constructions with
one and three initial symbols, we reduce the recurrence to finite sets of local
configurations and verify those sets by computer. We derive
formulas for the numbers of nested boxes, including arithmetic-term representations, and we prove that none of the three sequences is $k$-automatic for any integer $k\ge2$.
\end{abstract}

\medskip
\textbf{Keywords:} recurrent double sequence, cellular automaton, self-similarity, finite ordinals, mirrored ordinals.\par
\textbf{2020 Mathematics Subject Classification:} 68Q80, 03E10, 11B85.

\section{Introduction}

A recurrent double sequence over a finite set $A$ is a function $t:\N^2\to A$ determined by
its values on the two axes and by a rule $t(x,y)=f(t(x-1,y),t(x-1,y-1),t(x,y-1))$ for
$x,y\ge1$, where $f:A^3\to A$. Read along the anti-diagonals $x+y=s$, such a sequence is the
space-time diagram of a one-dimensional cellular automaton with a boundary
\cite{Kari05,Willson84}. This paper studies two sequences over $\F=\{0,1,2\}$ whose pictures
encode constructions from two and three initial symbols, and one sequence over
four letters whose picture encodes the construction from one initial symbol, which is John von
Neumann's construction of the finite ordinals.

\subsection{Previous work}
Recurrent double sequences over finite sets can simulate Turing machines, so their global
behaviour is undecidable in general: there is no algorithm that decides whether such a
sequence is ultimately zero \cite{Pru08}, an analogue of the undecidability of tiling the
plane \cite{Robinson71}. When the rule $f$ is linear over a finite field, the sequence is
automatic in the sense of Cobham and of Allouche and Shallit
\cite{Cobham72,AS03,AvHPS96,AvHPPS97}, and more generally $\mathbb{F}_p$-affine recurrent
sequences over $\mathbb{F}_q$ are $p$-automatic \cite{Pru13}. Linear rules produce
self-similar sequences generated by systems of substitutions, as shown for linear rules over
finite fields \cite{Pru09}, for homomorphisms of finite abelian
$p$-groups \cite{Pru11a}, for linear rules over the matrix ring $M_2(\mathbb{F}_2)$
\cite{Pru11b}, and for the Pascal rule modulo $2$ with Thue--Morse boundary values
\cite{Pru11c}. Some non-linear rules are self-similar as well, and \cite{Pru10} proves that a
recurrent double sequence which agrees on a large enough initial square with a sequence
produced by context-free substitutions agrees with it everywhere. Other sequences are not
automatic at all, and
\cite{Pru14} gives such a sequence with values in $\{0,1\}$, whose rows are periodic with
periods that grow with the row. Between these two ends lie sequences whose structure is
recursive but not self-similar in the sense of substitutions. According to its abstract, the
preprint \cite{Pru15} displays sequences over $\{0,1,2\}$ with constant boundary values that
model binary counting, the successor function and the finite ordinals, and observes that three
values suffice for these purposes.

\subsection{The constructions that are simulated}\label{sec:constructions}
From one initial symbol $\emptyset$, define the words
\[
s_0=\emptyset,\qquad s_n=(s_{n-1}\cdots s_0\mid s_0\cdots s_{n-1})\quad(n\ge1).
\]
For $p\in\{2,3\}$, start instead with $p$ distinct atoms $s_i=\emptyset_i$ for
$1\le i\le p$ and put
\[
s_n=(s_{n-p}\cdots s_1\mid s_1\cdots s_{n-p})\quad(n>p).
\]
These are the \emph{mirrored ordinals} with one, two and three initial symbols.
Ignoring order, repetition and the bar in the first construction gives the von Neumann
ordinals $n=\{0,\ldots,n-1\}$ \cite{vN23,Cantor97}.

The \emph{occurrence tree} of a word has the word as its root. Each occurrence in the
defining list gives a separate child, in the listed order, and each child is expanded
by the same rule. Atoms are leaves.
A sequence \emph{realises} a construction if the ordered tree of its bordered rectangles
inside each main rectangle is isomorphic to the occurrence tree of the corresponding word.
Our examples are $t$ (three letters, two initial symbols), $\rho$ (three letters, three
initial symbols), and $\tau$ (four letters, one initial symbol).

\subsection{Results}
Each of the sequences $t$, $\rho$ and $\tau$ uses a finite alphabet $A$, a rule
$f:A^3\to A$ and a constant boundary value $c\in A$. The corresponding sequence
$\alpha:\N^2\to A$ is defined by
\[
\begin{aligned}
\alpha(x,0)=\alpha(0,y)&=c &&(x,y\ge0),\\
\alpha(x,y)&=f(\alpha(x-1,y),\alpha(x-1,y-1),\alpha(x,y-1)) &&(x,y\ge1).
\end{aligned}
\]

Theorem~\ref{thm:main} describes $t(x,y)$ from its coordinates independently
of the cell recurrence. Section~\ref{sec:desc} states the description after the
necessary definitions. To prove the description, we check that it has the prescribed
boundary values and satisfies the recurrence at each cell. Induction along the
anti-diagonals then gives equality with $t$, because each cell depends only on the
two preceding anti-diagonals. Section~\ref{sec:proof} divides the recurrence check into
finitely many cases, each requiring a single evaluation of $f$.

In coordinates, Theorem~\ref{thm:structure} lists the vertices of all white-bordered
rectangles, main and nested, the position of the switching square inside each rectangle, and
the structure of the transition zones between consecutive rectangles. Its
Corollary~\ref{cor:ordinals} identifies the ordered box trees with the occurrence trees of
the words $(s_n)$.

The rule $f$ itself is the subject of Theorem~\ref{thm:triples}. Only $23$ of the $27$ triples
of $\F^3$ occur as arguments of $f$ in the computation of $t$, so $f$ can be changed on the
other $4$ triples without changing $t$. This gives $81$ functions, and $27$ symmetric ones,
that generate the same sequence, and no smaller set of triples determines $t$.

The finite claims that were also verified by computer are stated in
Section~\ref{sec:computer} together with the ranges verified. The proofs in
Sections~\ref{sec:proof}--\ref{sec:triples} do not depend on these computations.

Section~\ref{sec:three-initial} gives the construction with three initial symbols. Its
polynomial rule and its description differ from those of $t$, but the proof follows the
same induction along anti-diagonals. Lemma~\ref{lem:three-initial-local} reduces its
local recurrence check to a finite catalogue, which is verified by computer.

Section~\ref{sec:four} treats the construction with one initial symbol.
Theorem~\ref{thm:four} gives a sequence over four letters whose white-bordered rectangles
realise this construction, and describes all its cells. Theorem~\ref{thm:three-none}
excludes the prescribed arrangement over three letters within the stated finite windows,
for symmetric rules with constant boundary values and the stated density and nesting conditions.

The boxes of the four-letter sequence are built recursively. A box of level $n\ge2$
contains two ordinary boxes of level $n-1$ and, between them, a virtual box of the same
level whose walls are dashed. At level $1$, the middle region is the entirely white
virtual box of level $0$ (Proposition~\ref{prop:triple}). Unfolding the recursion gives
the children $S_{n-1},\dots,S_0,S_0,\dots,S_{n-1}$ of a box of level $n$.
Along the diagonal, the main boxes begin with two boxes of level $1$. For each $j\ge0$,
the next group consists of the children of a virtual box $V_j$ of level $3j+3$,
followed by ordinary boxes of levels $3j+3$ and $3j+4$
(Proposition~\ref{prop:diagonal}).

As for $t$, the proof of Theorem~\ref{thm:four} proceeds by induction along the
anti-diagonals. The recursive structure reduces the recurrence check to the local
configurations in a bounded region, where they were verified by computer.

In all three constructions the nesting gives short
recurrences for the number of boxes, with the closed expressions of
Corollaries~\ref{cor:two-count}, \ref{cor:three-count} and~\ref{cor:four-count}.
For three initial symbols, Corollary~\ref{cor:three-count} gives both a binomial sum and
an arithmetic term obtained by extracting a coefficient from the generating function.

Theorem~\ref{thm:nonauto} shows that none of $t$, $\rho$ and $\tau$ is
$k$-automatic for any $k\ge2$. For each sequence, we choose a letter on the line
$y=x+1$ whose gaps are unbounded but have length at most a constant times the
logarithm of their starting position, for sufficiently large starting positions.
An automatic sequence cannot have this gap behaviour.

\section{Three letters and two initial symbols}
Throughout, $t:\N^2\to\F$ is the recurrent double sequence with $t(x,0)=t(0,y)=1$ for all
$x,y\ge0$ and
\begin{align*}
t(x,y)&=f\big(t(x-1,y),\,t(x-1,y-1),\,t(x,y-1)\big),\\
f(x,y,z)&=x^2y^2+y^2z^2+y^2+xz+2y+2\quad\text{in }\F.
\end{align*}
We call the value $0$ white, $1$ red and $2$ blue. Figure~\ref{fig:three-overview} shows the
corner and the first five main rectangles, with a monochrome view of their borders beside it.
Figure~\ref{fig:S5} shows the fifth rectangle in more detail.

\begin{figure}[ht]
\centering
\includegraphics[width=0.48\textwidth]{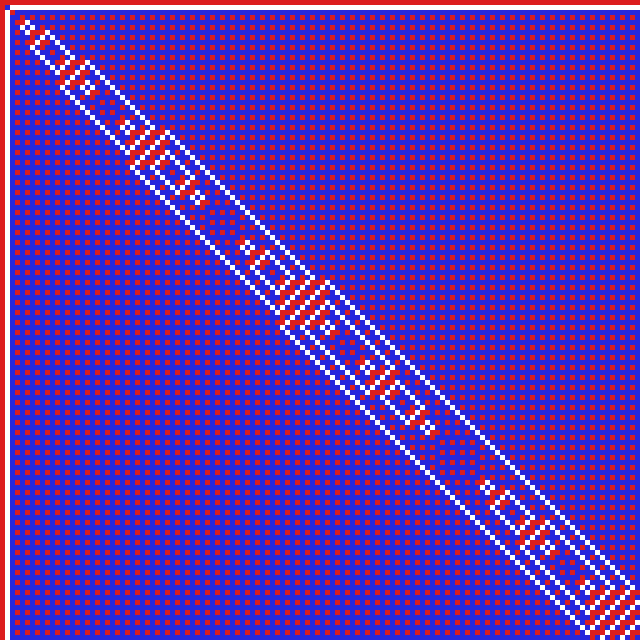}\hfill
\includegraphics[width=0.48\textwidth]{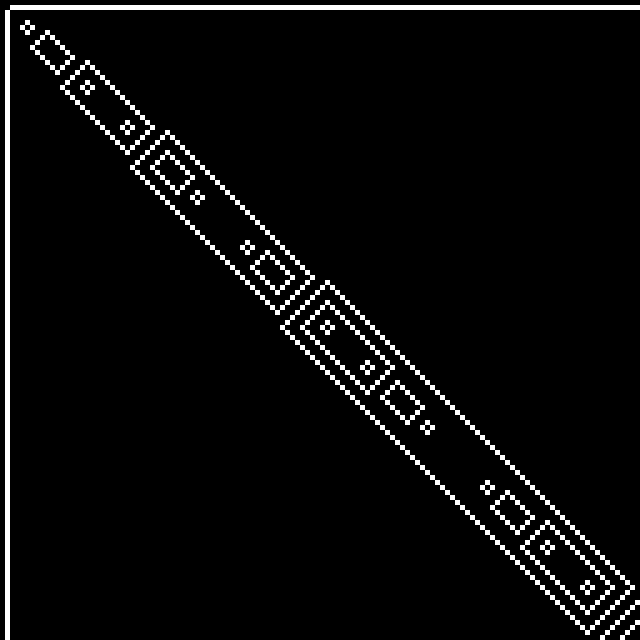}
\caption{The cells $(x,y)$ with $0\le x,y<128$ of the three-letter sequence $t$.
Left: white $0$, red $1$ and blue $2$. Right: the same cells, with red and blue drawn in black.
The main rectangles $S_1,\dots,S_5$ are shown completely. The opening of $S_6$ enters at the
bottom right. The monochrome view displays the white borders and their nesting.}
\label{fig:three-overview}
\end{figure}

\section{The rule and the rotated coordinates}\label{sec:rot}

For a cell $c=(x,y)$ with $x,y\ge1$ we write
\[
\cx=(x-1,y),\qquad \cy=(x-1,y-1),\qquad \cz=(x,y-1)
\]
for its three parents, so that $t(c)=f(t(\cx),t(\cy),t(\cz))$.

\begin{lem}\label{lem:rule}
For $x,y,z\in\F$,
\[
f(x,y,z)=\begin{cases}(x-z)^2-y & \text{if } y\ne0,\\ xz+2 & \text{if } y=0.\end{cases}
\]
Explicitly, $f(x,1,z)$ is $2$ if $x=z$ and $0$ otherwise, $f(x,2,z)$ is $1$ if $x=z$ and $2$
otherwise, and $f(x,0,z)$ is $2$ if $xz=0$, $0$ if $x=z\ne0$, and $1$ if $\{x,z\}=\{1,2\}$.
\end{lem}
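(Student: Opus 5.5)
Since $f(x,y,z)=x^2y^2+y^2z^2+y^2+xz+2y+2$ is a polynomial over $\F$, I will prove the lemma by splitting on the middle argument $y$ and simplifying with $y^2$, using the arithmetic of $\F$: $1+1=2$, $2\cdot 2=1$, $2=-1$, and $a^2\in\{0,1\}$ with $a^2=1$ exactly when $a\neq0$.

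\emph{Case $y=0$.} All terms containing $y$ vanish, leaving $f(x,0,z)=xz+2$, which is the second branch. For the explicit values: if $xz=0$ we get $2$. If $x=z\ne0$ then $xz=x^2=1$, so $f=1+2=0$. If $\{x,z\}=\{1,2\}$ then $xz=2$, so $f=2+2=1$.

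\emph{Case $y\ne0$.} Here $y^2=1$, so
\[
f(x,y,z)=x^2+z^2+1+xz+2y+2=x^2+xz+z^2+2y.
\]
Since $2=-1$ and $xz=-2xz$ in $\F$, we have $x^2+xz+z^2=x^2-2xz+z^2=(x-z)^2$, and $2y=-y$. Hence $f(x,y,z)=(x-z)^2-y$, which is the first branch. For the explicit values, $(x-z)^2$ equals $0$ if $x=z$ and $1$ otherwise. With $y=1$ this gives $-1=2$ when $x=z$ and $0$ otherwise. With $y=2$ it gives $-2=1$ when $x=z$ and $1-2=-1=2$ otherwise.

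Everything here is a direct computation. The only step that needs any care is recognising $x^2+xz+z^2$ as a perfect square in characteristic $3$. If desired, all $27$ triples can also be checked by direct tabulation as a sanity check.
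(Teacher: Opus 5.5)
Your proposal is correct and follows the paper's proof: split on $y$, use $y^2=1$ for $y\ne0$, and recognise $x^2+xz+z^2=(x-z)^2$ in characteristic $3$. The paper writes this as $(x-z)^2+3xz$, while you write it via $xz=-2xz$. You also spell out the explicit case values, which the paper leaves to the reader.
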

\begin{proof}
If $y\ne0$ then $y^2=1$, so $f=x^2+z^2+xz+2y+3=(x-z)^2+3xz+2y+3=(x-z)^2-y$ in $\F$. If $y=0$
then $f=xz+2$.
\end{proof}

Since $f(x,y,z)=f(z,y,x)$, induction on $x+y$ shows that $t(x,y)=t(y,x)$ for all $x,y$.

\subsection{Rotated coordinates}
For a cell $(x,y)$ put $s=x+y$ and $d=y-x$, and write $\cl sd$ for the cell
$\big(\tfrac{s-d}2,\tfrac{s+d}2\big)$. A pair $(s,d)$ is a cell if and only if $|d|\le s$ and
$s\equiv d\pmod2$. We call the set of cells with a fixed $s$ the \emph{row} $s$, since the
induction runs over $s$, and we think of $s$ as time and of $d$ as position. On odd rows all
positions are odd, and on even rows all positions are even. For $c=\cl sd$ with $|d|\le s-2$,
\[
\cx=\cl{s-1}{d+1},\qquad \cy=\cl{s-2}{d},\qquad \cz=\cl{s-1}{d-1},
\]
and the cells with $|d|=s$ are the cells on the axes, with value $1$. We write
$T\cl sd=t\big(\tfrac{s-d}2,\tfrac{s+d}2\big)$. The main diagonal $x=y$ is the line $d=0$, and
the symmetry $(x,y)\mapsto(y,x)$ is $d\mapsto-d$.

\begin{lem}[uniqueness]\label{lem:unique}
Let $D$ be a function on cells with $D\cl s{\pm s}=1$ for all $s$ and
$D(c)=f(D(\cx),D(\cy),D(\cz))$ for every cell $c=\cl sd$ with $|d|\le s-2$. Then $D=T$.
\end{lem}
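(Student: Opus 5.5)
We argue by strong induction on the row index $s$, showing that $D\cl sd=T\cl sd$ for every cell on row $s$. First we note that $T$ itself satisfies both hypotheses. On the axes $t(x,0)=t(0,y)=1$, and in rotated coordinates these are exactly the cells with $|d|=s$, so $T\cl s{\pm s}=1$. For a cell $c=\cl sd$ with $|d|\le s-2$ we have $x=\tfrac{s-d}2\ge1$ and $y=\tfrac{s+d}2\ge1$. The recurrence defining $t$ then reads $T(c)=f(T(\cx),T(\cy),T(\cz))$, once we use the identification $\cx=\cl{s-1}{d+1}$, $\cy=\cl{s-2}{d}$, $\cz=\cl{s-1}{d-1}$ established above.

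For the base cases, row $s=0$ consists only of the cell $\cl00$, which has $|d|=s$, so $D=T=1$ there. Row $s=1$ consists of $\cl1{\pm1}$, also boundary cells, so again both functions equal $1$.

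For the inductive step, fix $s\ge2$ and assume that $D$ and $T$ agree on all rows $s'<s$. Take a cell $c=\cl sd$ on row $s$. If $|d|=s$, both values are $1$ by hypothesis. Otherwise $|d|\le s-2$, since $d\equiv s\pmod 2$. Then the three parents $\cl{s-1}{d\pm1}$ and $\cl{s-2}{d}$ are genuine cells: $|d\pm1|\le s-1$ and $|d|\le s-2$, and the parities match. They lie on rows $s-1$ and $s-2$, so the induction hypothesis gives
\[
D(c)=f(D(\cx),D(\cy),D(\cz))=f(T(\cx),T(\cy),T(\cz))=T(c).
\]
This completes the induction, so $D=T$ on all cells.

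There is no real obstacle here. The only points needing care are that the case split $|d|=s$ versus $|d|\le s-2$ is exhaustive, which follows from the parity condition $s\equiv d\pmod 2$, and that the parents of an interior cell lie inside the cell set on strictly earlier rows. Both are immediate from the coordinate formulas.
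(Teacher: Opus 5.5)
Your proof is correct and follows the paper's argument: $D$ and $T$ satisfy the same recurrence with the same boundary values, so induction on the row index $s$ gives agreement, since each row depends only on the two preceding rows. The paper states this in one line. Your checks that $T$ satisfies the hypotheses, that the case split $|d|=s$ versus $|d|\le s-2$ is exhaustive, and that the parents lie on earlier rows are the routine details that the paper leaves implicit.
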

\begin{proof}
Both $D$ and $T$ satisfy the same recurrence with the same boundary values, so they agree on
row $s$ if they agree on rows $s-1$ and $s-2$. Induction on $s$.
\end{proof}

We say that $D$ is \emph{correct at} $c$ if $D(c)=f(D(\cx),D(\cy),D(\cz))$. If $D$ is
symmetric, that is $D\cl s{-d}=D\cl sd$, then $D$ is correct at $\cl sd$ if and only if it is
correct at $\cl s{-d}$, because $f$ is symmetric in $x$ and $z$. On the diagonal $d=0$ the
parents $\cx$ and $\cz$ are mirror images, so $D(\cx)=D(\cz)$ for a symmetric $D$.

\begin{figure}[ht]
\centering
\includegraphics[width=0.46\textwidth]{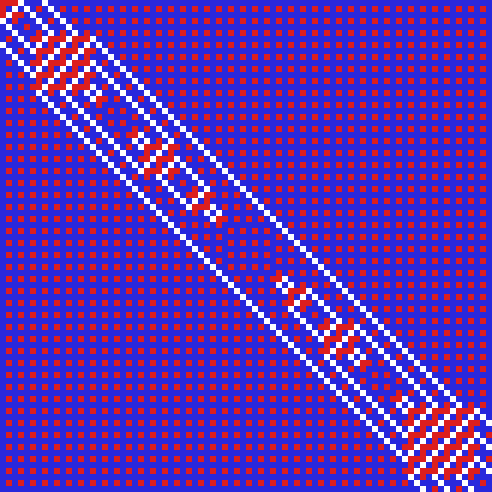}\hfill
\includegraphics[width=0.50\textwidth]{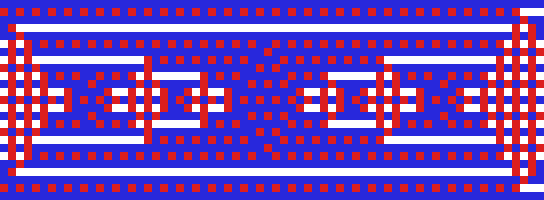}
\caption{Left: the cells $(x,y)$ with $50\le x,y<132$, where white is $0$, red is $1$ and blue
is $2$. The main rectangle $S_5$ occupies the middle, and the squares at its two ends are the
transition squares $T_4$ and $T_5$ of Proposition~\ref{prop:transition}. Right: the same
rectangle in the rotated coordinates of Section~\ref{sec:rot}, with the anti-diagonal index
$s=x+y$ grouped into successive pairs from left to right and the position $d=y-x$ increasing
from top to bottom. Each displayed column $p$ interleaves the two successive anti-diagonals
$s=2p$ and $s=2p+1$, using the first at even $d$ and the second at odd $d$. Reading down a
column therefore visits the two anti-diagonals alternately, tracing a staircase in the original
$(x,y)$ grid. The long white lines are the
walls of $S_5$, the short vertical white lines are opening and closing edges, and the diamond
of red cells in the middle is the switching square of Theorem~\ref{thm:structure}(iv).}
\label{fig:S5}
\end{figure} 

\section{The three lattices}\label{sec:lattices}

\begin{defn}\label{def:lattices}
For $\varepsilon\in\{0,2\}$ and a cell $\cl sd$ let
\[
\Lam_\varepsilon\cl sd=\begin{cases}1& \text{if } s \text{ is odd and } s\equiv|d|+\varepsilon \pmod 4,\\
2&\text{otherwise,}\end{cases}
\]
\[
\Gam\cl sd=\begin{cases}1& \text{if } s \text{ is even and } s\equiv d \pmod 4,\\ 2&\text{otherwise.}\end{cases}
\]
\end{defn}

In the coordinates $(x,y)$, $\Gam$ is red on the cells with $x$ and $y$ both even and blue
elsewhere. We call $\Gam$ the \emph{central lattice}. $\Lam_2$ is red on the cells with $x$ even and $y$ odd in the region $y<x$ and on the
cells with $x$ odd and $y$ even in the region $y>x$, because $s-|d|=2y$ for $y<x$ and $s-|d|=2x$
for $y>x$. $\Lam_0$ is red on the complementary cells of the odd rows. We call $\Lam_2$ the
\emph{normal environment} and $\Lam_0$ the \emph{reversed environment}. On even rows both are
blue, and on odd rows $\Lam_0$ is red where $\Lam_2$ is blue and conversely.

\begin{lem}\label{lem:lattices}
Let $c=\cl sd$ and let $\Pi$ be one of $\Gam,\Lam_0,\Lam_2$. Put $v=f(\Pi(\cx),\Pi(\cy),\Pi(\cz))$.
\begin{alphenum}
\item If $\Pi=\Gam$ then $v=\Gam(c)$.
\item If $\Pi=\Lam_\varepsilon$ and $d\ne0$ then $v=\Lam_\varepsilon(c)$.
\item If $\Pi=\Lam_\varepsilon$ and $d=0$ then $v=1$, while $\Lam_\varepsilon(c)=2$.
\end{alphenum}
\end{lem}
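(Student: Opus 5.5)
We argue by direct case analysis on the parity of $s$ and on $s\bmod 4$, using the explicit form of $f$ from Lemma~\ref{lem:rule}. Throughout we use the parent formulas $\cx=\cl{s-1}{d+1}$, $\cy=\cl{s-2}{d}$, $\cz=\cl{s-1}{d-1}$ of Section~\ref{sec:rot}. The key observation is that both lattices take only the values $1$ and $2$. Hence the middle argument $y$ is never $0$, and Lemma~\ref{lem:rule} reduces to: $f(x,y,z)=2$ if $y=1,x=z$; $0$ if $y=1,x\ne z$; $1$ if $y=2,x=z$; $2$ if $y=2,x\ne z$. So we only need to know $\Pi(\cy)$ and whether $\Pi(\cx)=\Pi(\cz)$.

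For (a), let $\Pi=\Gam$.
\begin{itemize}
\item If $s$ is odd, then $s-2$ is odd, so $\Gam(\cy)=2$. The parents $\cx,\cz$ lie on the even row $s-1$, at positions $d+1$ and $d-1$. These two positions differ by $2$, so exactly one of them is $\equiv s-1\pmod 4$. Thus $\Gam(\cx)\ne\Gam(\cz)$, so $v=2=\Gam(c)$.
\item If $s$ is even, then $\cx,\cz$ lie on an odd row, so both are $2$ and they are equal. Also $\Gam(\cy)=1$ iff $s-2\equiv d\pmod4$, i.e.\ iff $s\not\equiv d\pmod 4$. So if $s\equiv d$ we get $y=2$ and $v=1$; otherwise $y=1$ and $v=2$. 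In both cases $v=\Gam(c)$.
\end{itemize}

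For (b) and (c), let $\Pi=\Lam_\varepsilon$.
\begin{itemize}
\item If $s$ is even, then $\cy$ lies on an even row, so $\Lam_\varepsilon(\cy)=2$. The parents $\cx,\cz$ lie on the odd row $s-1$ at $|d+1|$ and $|d-1|$.
  \begin{itemize}
  \item For $d\ne0$ (here $d$ is even, so $|d|\ge2$), the absolute values $|d\pm1|$ are $|d|\pm1$. They differ by $2$, so exactly one satisfies the congruence, the values differ, and $v=2=\Lam_\varepsilon(c)$.
  \item For $d=0$, both parents sit at $|{\pm1}|=1$, so they are equal and $v=1$, whereas $\Lam_\varepsilon(c)=2$. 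This is (c).
  \end{itemize}
\item If $s$ is odd, then $\cx,\cz$ lie on an even row, both equal $2$, and $x=z$. Since $d$ is odd, $d\ne0$ automatically. Now $\Lam_\varepsilon(\cy)=1$ iff $s-2\equiv|d|+\varepsilon\pmod4$, i.e.\ iff $\Lam_\varepsilon(c)=2$. So $\Lam_\varepsilon(c)=1$ gives $y=2$ and $v=1$, and $\Lam_\varepsilon(c)=2$ gives $y=1$ and $v=2$. Either way $v=\Lam_\varepsilon(c)$.
\end{itemize}

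The only delicate point is the absolute value in the definition of $\Lam_\varepsilon$ on the even rows. Away from $d=0$, the positions $d\pm1$ have the same sign, so the mod-$4$ shift by $2$ separates them. At $d=0$ the reflection makes the two parents equal, and this produces the failure in (c). Also, at the edges $|d|=s$ the cell is on the boundary and the recurrence is not required, so we take $|d|\le s-2$ throughout, and all parents exist.
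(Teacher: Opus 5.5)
Your proof is correct and follows essentially the same route as the paper. Both arguments split on the parity of $s$ and use that one pair of parents lies on a row where the lattice is constantly blue. Lemma~\ref{lem:rule} then reduces everything to the value of $\Pi(\cy)$ and whether $\Pi(\cx)=\Pi(\cz)$. The case $d=0$ on even rows, where the mirror-image parents coincide, is exactly the failure in (c).
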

\begin{proof}
(a) Let $s$ be even. Then $\cx$ and $\cz$ lie on an odd row, so both are blue, and
$f(2,y,2)=1$ if and only if $y=2$ by Lemma~\ref{lem:rule}. Now $\Gam(\cy)=2$ if and only if
$s-2\not\equiv d\pmod 4$, which, as $s-d$ is even, is equivalent to $s\equiv d\pmod4$, that is
to $\Gam(c)=1$. Let $s$ be odd. Exactly one of $s-1-(d+1)$ and $s-1-(d-1)$ is divisible by
$4$, so one of $\cx,\cz$ is red and the other blue, $\cy$ is blue because $s-2$ is odd, and
$f(x,2,z)=2$ for $x\ne z$, which agrees with $\Gam(c)=2$.

(b) Let $s$ be odd. Then $\cx$ and $\cz$ are blue and $f(2,y,2)=1$ if and only if $y=2$, that
is if and only if $s-2\not\equiv|d|+\varepsilon\pmod4$, which is equivalent to
$s\equiv|d|+\varepsilon$, that is to $\Lam_\varepsilon(c)=1$. Let $s$ be even and $d\ne0$. Then
$|d|\ge2$, so $|d+1|$ and $|d-1|$ are $|d|+1$ and $|d|-1$ in some order, one of the two
odd-row parents is red and the other blue, $\cy$ is blue, and $f(x,2,z)=2$ for $x\ne z$.

(c) If $d=0$ then $\cx$ and $\cz$ have the same value and $\cy$ is blue, so $v=1$.
\end{proof}

Part (c) explains why the diagonal is never covered by an environment: at $d=0$ the two
lattices of the regions $y<x$ and $y>x$ meet and the recurrence produces a red cell instead of
the blue cell of the lattice.

\section{Boxes}\label{sec:boxes}

For $n\ge1$ put
\[
w_n=2n-1,\qquad L_n=2^{n+2}-6 .
\]

\begin{defn}\label{def:box}
For $n\ge1$ and odd $e\ge w_n$, the \emph{box of level $n$ opening at time $e$} is the set of cells
\[
\B(n,e)=\{\cl sd:\ e\le s\le e+L_n,\ |d|\le w_n\}.
\]
Its \emph{border} consists of the two \emph{walls} $|d|=w_n$, the \emph{opening edge} $s=e$ and
the \emph{closing edge} $s=e+L_n$. The other cells form its \emph{interior}. Its \emph{midpoint}
is $\mu=e+L_n/2=e+2^{n+1}-3$, its \emph{half-width} is $w_n$, and its \emph{vertex} is
$A=(u,v)=\big(\tfrac{e-w_n}2,\tfrac{e+w_n}2\big)$.
\end{defn}

In the coordinates $(x,y)$, $\B(n,e)$ is the closed rectangle with vertices $(u,v)$,
$(u+k,v+k)$, $(v+k,u+k)$ and $(v,u)$, where $k=L_n/2=2^{n+1}-3$. Its two edges parallel to the
diagonal have $k+1=2^{n+1}-2$ cells each and its two other edges have $v-u+1=2n$ cells each.

\begin{defn}[children]\label{def:children}
A box of level $1$ or $2$ has no children. For $n\ge3$ the \emph{children} of $\B(n,e)$ are the
$2(n-2)$ boxes
\[
\B(k,\,e+p_{n,k})\quad\text{and}\quad \B(k,\,e+q_{n,k}),\qquad 1\le k\le n-2,
\]
where
\[
p_{n,k}=2^{n+1}-2^{k+3}-2(n-k)+8,\qquad q_{n,k}=L_n-L_k-p_{n,k}=2^{n+1}+2^{k+2}+2(n-k)-8.
\]
In time order the children are the boxes of levels $n-2,n-3,\dots,1$, which form the
\emph{first half}, followed by the boxes of levels $1,2,\dots,n-2$, which form the \emph{second
half}.
\end{defn}

\begin{defn}\label{def:family}
The \emph{main boxes} are $S_n=\B(n,e_n)$ with $e_n=2^{n+2}-2n+3$ for $n\ge1$. The family
$\cB$ consists of the main boxes and of all boxes obtained from them by taking children
repeatedly.
\end{defn}

\begin{lem}\label{lem:timeline}
Let $\B(n,e)\in\cB$, $w=w_n$, $L=L_n$ and $\mu=e+L/2$.
\begin{alphenum}
\item $e\equiv w\pmod4$. Consequently $e$ is odd, $e+L\equiv w+2\pmod4$ and $\mu\equiv w+1\pmod4$.
\item Let $n\ge3$. The first child opens at $e+4$. Each child except the first copy of $S_1$ and
the last child closes $4$ rows before the next child opens. The first copy of $S_1$ closes at
$c_{\mathrm{gap}}=\mu-2n-1$, and the second copy opens at $e_{\mathrm{gap}}=\mu+2n+1$.
These times bound the switching gap. The last child closes at $e+L-4$.
The main box $S_n$ closes at $e_{n+1}-4$.
\item The children lie in the region $e+4\le s\le e+L-4$, $|d|\le w-4$. The first half lies in
$s\le\mu-2n-1$ and the second half in $s\ge\mu+2n+1$. Two boxes of $\cB$ are either nested or
disjoint, and two disjoint boxes are separated by at least $4$ rows.
\item Every cell of every box of $\cB$ satisfies $|d|\le s-8$.
\end{alphenum}
\end{lem}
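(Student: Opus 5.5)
The plan is to prove all four parts by induction over the construction of $\cB$: first for the main boxes $S_n$, then passing from a box $\B(n,e)$ to its children. Each part reduces to elementary arithmetic with the explicit quantities $w_n$, $L_n$, $e_n$, $p_{n,k}$ and $q_{n,k}$.

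For (a), the main boxes satisfy $e_n-w_n=2^{n+2}-4n+4\equiv0\pmod4$. For a child I assume $e\equiv 2n-1\pmod 4$ for the parent. Then
\[
e+p_{n,k}-w_k\equiv (2n-1)+2^{n+1}-2(n-k)-(2k-1)\equiv 2^{n+1}\equiv0 \pmod 4 .
\]
Similarly, $e+q_{n,k}-w_k\equiv 4n-4k\equiv 0\pmod 4$. The remaining congruences follow from $L_n\equiv2\pmod4$ and $L_n/2=2^{n+1}-3\equiv1\pmod4$ for $n\ge1$, and $e$ is odd because $w$ is.

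For (b), I compute directly from Definition~\ref{def:children}.
\begin{itemize}
\item First child: $p_{n,n-2}=4$, so it opens at $e+4$.
\item Gaps in the first half: $p_{n,k-1}-p_{n,k}-L_k=4$.
\item Gaps in the second half: $q_{n,k+1}-q_{n,k}-L_k=4$.
\item First copy of $S_1$: it closes at $e+p_{n,1}+L_1=e+2^{n+1}-2n-4=\mu-2n-1$.
\item Second copy of $S_1$: it opens at $e+q_{n,1}=e+2^{n+1}+2n-2=\mu+2n+1$.
\item Last child: it closes at $e+q_{n,n-2}+L_{n-2}=e+L_n-p_{n,n-2}=e+L-4$.
\item Main boxes: $e_n+L_n+4=2^{n+3}-2n+1=e_{n+1}$.
\end{itemize}

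For (c), the bound on $s$ is read off from (b), and the width bound is $w_k=2k-1\le 2n-5=w-4$ for $k\le n-2$. This already shows that every child lies inside its parent. For the nested-or-disjoint claim I would argue on the tree of $\cB$, whose roots are the main boxes. Two boxes either stand in an ancestor relation, and are then nested, or they descend from two distinct siblings (or from two distinct main boxes). By (b), and since the switching gap has length $4n+2\ge4$, distinct siblings occupy row ranges that are separated by at least $4$ rows. The same holds for consecutive main boxes, and hence for any two distinct main boxes. The descendants lie within these row ranges, so they are disjoint and at least $4$ rows apart.

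For (d), it suffices to treat the main boxes, because every box of $\cB$ is contained in a main box. A cell of $S_n$ has $|d|\le w_n$ and $s\ge e_n$. Since $e_n-w_n=2^{n+2}-4n+4\ge8$, with equality at $n=1$, we get $|d|\le w_n\le e_n-8\le s-8$.

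The only real obstacle is bookkeeping: I have to make sure that "separated by at least $4$ rows" is read consistently. Concretely, a box closing at row $c$ and the next one opening at $c+4$ must count as separated by $4$ rows, and the switching gap must not break this. Everything else is explicit verification of identities.
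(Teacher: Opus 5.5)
Your proposal is correct and follows essentially the same route as the paper: explicit congruence and offset arithmetic for (a), (b) and (d), and the nesting in (c) by induction along the child relation. The differences are cosmetic: you check the second-half gaps directly instead of through the mirror identity $q_{n,k}+L_k=L_n-p_{n,k}$, and you spell out the nested-or-disjoint argument via the tree of $\cB$, which the paper compresses into ``induction over the nesting''.
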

\begin{proof}
(a) For a main box, $e_n=2^{n+2}-2n+3\equiv 3-2n\equiv 2n-1\pmod4$. For $n\ge3$ and
$1\le k\le n-2$ we have $p_{n,k}\equiv q_{n,k}\equiv 2(n-k)\pmod4$ and $L_n\equiv L_k\equiv2\pmod4$,
so a child of level $k$ of a box opening at $e\equiv 2n-1$ opens at a time
$\equiv 2n-1+2(n-k)\equiv 2k-1\pmod4$. The other congruences follow from $L\equiv 2$ and
$L/2=2^{n+1}-3\equiv1\pmod4$.

(b) $p_{n,n-2}=4$ and $p_{n,k-1}-p_{n,k}=2^{k+2}-2=L_k+4$. The second half is the mirror image
of the first half in time, because $q_{n,k}+L_k=L_n-p_{n,k}$. The first copy of $S_1$ closes at
$e+p_{n,1}+2=e+2^{n+1}-2n-4=\mu-2n-1$, and by the mirror symmetry the second copy opens at
$\mu+2n+1$. Finally $e_n+L_n+4=2^{n+3}-2n+1=e_{n+1}$.

(c) Children have half-width $w_k\le w_{n-2}=w-4$, and the time bounds follow from (b) by
induction over the nesting.

(d) A cell of a box inside $S_n$ satisfies $s-|d|\ge e_n-w_n=2^{n+2}-4n+4\ge8$.
\end{proof}

\begin{defn}[edges and hosts]\label{def:edges}
An \emph{edge} is a pair $(e,a)$ of a time and a half-width. For a box $H=\B(n,e)\in\cB$, the set
$\cE(H)$ consists of the edges $(e,w_n)$ and $(e+L_n,w_n)$ of $H$ and of the edges $(e',w_k)$
and $(e'+L_k,w_k)$ of each child $\B(k,e')$ of $H$. For the \emph{global host} $\ast$,
\[
\cE(\ast)=\{(3,1)\}\cup\{(e_n,w_n),\ (e_n+L_n,w_n):n\ge1\}.
\]
The pair $(3,1)$, which we call the \emph{virtual edge}, stands for the two white cells
$t(1,2)$ and $t(2,1)$. The \emph{host} of a cell $c$ is the smallest box of $\cB$ whose interior
contains $c$, and $\ast$ if there is no such box.
\end{defn}

By Lemma~\ref{lem:timeline}, two consecutive elements of $\cE(H)$ are $4$ rows apart,
with four exceptions.
\begin{itemize}
\item The opening and closing edges of a box of level $1$ are $2$ rows apart.
\item The opening and closing edges of a box of level $2$ are $10$ rows apart.
\item In a box of level $n\ge3$, the first copy of $S_1$ closes $4n+2$ rows before
the second copy opens. We call this interval the \emph{switching gap}.
\item The edges $(3,1)$ and $(9,1)$ of $\cE(\ast)$ are $6$ rows apart.
\end{itemize}
When two consecutive edges $(e,a)$ and $(e+4,b)$ are
$4$ rows apart, one of the five cases of Table~\ref{tab:J} holds.
\begin{table}[ht]
\centering
\begin{tabular}{lll}
\toprule
case & situation & $b$\\
\midrule
J1 & opening of $H$, then opening of its first child & $a-4$\\
J2 & closing of a first-half child, then opening of the next child & $a-2$\\
J3 & closing of a second-half child, then opening of the next child & $a+2$\\
J4 & closing of the last child, then closing of $H$ & $a+4$\\
J5 & in $\cE(\ast)$: closing of $S_n$, then opening of $S_{n+1}$ & $a+2$\\
\bottomrule
\end{tabular}
\caption{The five cases of two consecutive edges $(e,a)$ and $(e+4,b)$ of a set $\cE(H)$ that
are $4$ rows apart, and the half-width $b$ of the second edge in terms of the half-width $a$
of the first.}
\label{tab:J}
\end{table}

\section{The description of the sequence}\label{sec:desc}

\begin{defn}[the description $D$]\label{def:D}
For a cell $c=\cl sd$, the value $D(c)$ is given by the first of the following rules that
applies.
\begin{Denum}{-1}
\item\label{D0} If $|d|=s$, then $D(c)=1$. If $|d|=s-2\ge1$, then $D(c)=0$.
\item\label{D1} If $c$ lies on the border of a box of $\cB$, then $D(c)=0$.
\end{Denum}
Otherwise let $H$ be the host of $c$.
\begin{Denum}{1}
\item\label{D2} If $s=e\pm1$ and $|d|\le a-1$ for some $(e,a)\in\cE(H)$, then $D(c)=\Gam(c)$.
\item\label{D3} If $s=e\pm2$ and $|d|\le a-2$ for some $(e,a)\in\cE(H)$, then $D(c)=1$.
\item\label{D4} If $H=\B(n,e)$ with $n\ge2$ and $|s-\mu_H|+|d|\le w_n-3$, then
$D(c)=\Gam(c)$.
\item\label{D5} Otherwise $D(c)=\Lam_0(c)$ if $H$ is a box and $s<\mu_H$, and
$D(c)=\Lam_2(c)$ if $H=\ast$ or $s\ge\mu_H$.
\end{Denum}
\end{defn}

The cells of \ref{D2} and \ref{D3} are the \emph{transition cells}. Those of \ref{D2} lie on
even rows next to an edge, and those of \ref{D3} are the red cells on odd rows two steps from
an edge. The set of \ref{D4} is the \emph{switching diamond} of $H$, of radius $r_H=w_n-3$
around $\cl{\mu_H}0$. In the coordinates $(x,y)$ it is a square with sides parallel to the
axes. The cells of \ref{D5} are the \emph{environment cells}. Rules \ref{D0} and \ref{D1}
never apply to the same cell, by Lemma~\ref{lem:timeline}(d). Rule \ref{D2} concerns only even
rows, since $e$ is odd, and rule \ref{D3} only odd rows. The diamond is at distance at least
$5$ from every edge of $\cE(H)$. The function $D$ is symmetric under $d\mapsto-d$.

\begin{thm}\label{thm:main}
For all $x,y\ge0$, $t(x,y)=D\cl{x+y}{y-x}$.
\end{thm}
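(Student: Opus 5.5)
By Lemma~\ref{lem:unique} it suffices to show two things: that $D\cl s{\pm s}=1$, which is rule~\ref{D0}, and that $D$ is correct at every cell $c=\cl sd$ with $|d|\le s-2$. Since $D$ is symmetric and $f$ is symmetric in its outer arguments, we may restrict to $d\ge0$. We then split according to the rule that determines $D(c)$, and within each rule according to which rules determine $D(\cx)$, $D(\cy)$ and $D(\cz)$. The guiding principle is locality. Away from edges and the diagonal, all three parents and $c$ are given by the same lattice, so correctness is exactly Lemma~\ref{lem:lattices}. The work lies in the finitely many local pictures where rules change.

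First we dispose of the generic cases. If $c$ and its three parents are all environment cells of the same host with the same lattice and $d\ne0$, then Lemma~\ref{lem:lattices}(b) applies. If they all lie in a switching diamond or all use $\Gam$, then Lemma~\ref{lem:lattices}(a) applies. Next we handle the boundary strip $|d|\in\{s-2,s-4,\dots\}$ near the axes. For small $s$ there are only finitely many cells below the first edge region: by Lemma~\ref{lem:timeline}(d) every box has $|d|\le s-8$, so only the virtual edge $(3,1)$ and the lattice $\Lam_2$ of $\ast$ interact with the axes. This gives a short finite check, plus a uniform check of the cells with $|d|=s-2$ and $s-4$ adjacent to the constant-$1$ axis against $\Lam_2$.

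The main part is the analysis around an edge $(e,a)\in\cE(H)$, organised by row offset $s-e\in\{-2,-1,0,1,2\}$ and by position relative to the endpoints $|d|=a$. We use Lemma~\ref{lem:timeline}(a) to know that $e\equiv a\pmod 4$, so every parity of $\Gam$, $\Lam_0$ and $\Lam_2$ on the rows $e\pm1,e\pm2$ is determined. On the edge row the cells are white. We verify, for example, that $f(\Gam,\,1,\,\Gam)$ or $f(\Lam,\,1,\,\Lam)$ at row $e-1$ produces $0$ via Lemma~\ref{lem:rule}, and that the row-$e+1$ and row-$e+2$ cells reproduce $\Gam$ and then $1$. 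The endpoints of the edge and the walls $|d|=w_n$ are treated separately, as is the corner where a wall meets an opening or closing edge. Because consecutive edges are at least $4$ rows apart, except in the listed exceptions, the transition zones of different edges overlap only in the configurations J1--J5 of Table~\ref{tab:J}. For each of these, the change of half-width $b-a\in\{\pm2,\pm4\}$ yields a bounded number of distinct local pictures, each checked by one evaluation of $f$. The four exceptional spacings are handled in the same way: boxes of levels $1$ and $2$, the switching gap, and the gap $(3,1)$ to $(9,1)$. In the switching gap the environment changes from $\Lam_0$ to $\Lam_2$ at $\mu_H$ through the diamond.

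The hardest step is the diagonal and the switching diamond. Lemma~\ref{lem:lattices}(c) says that the environment is never self-consistent at $d=0$, so we must show that, in every environment stretch, the diagonal cells are in fact covered by rules~\ref{D2}--\ref{D4}. This uses the fact that edges recur every $4$ rows except in the switching gap, whose length $4n+2$ is exactly filled by the diamond of radius $w_n-3$ together with the transitions of the two copies of $S_1$. We would verify this gap explicitly, including the diamond boundary $|s-\mu|+|d|=w_n-3$. There $\Gam$ must meet $\Lam_0$ before $\mu$ and $\Lam_2$ after $\mu$, and $\mu\equiv w+1\pmod 4$ makes the boundary cells consistent. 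Since all cases are uniform in $n$, $e$ and $a$ apart from these congruences, the verification reduces to finitely many evaluations of $f$, which completes the induction.
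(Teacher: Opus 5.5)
Your reduction to a local check through Lemma~\ref{lem:unique} is the route the paper takes. So are the use of Lemma~\ref{lem:lattices} for the generic cells and the case analysis around the edges organised by J1--J5. The step you single out as the hardest, however, rests on a false claim. Lemma~\ref{lem:lattices}(c) only says that a diagonal cell whose three parents carry the values of one environment lattice would come out red. It does not follow that every diagonal cell is covered by rules~\ref{D2}--\ref{D4}, and in fact this is false. Take a box of level $n\ge2$ with midpoint $\mu$, and look at the even rows between the last edge before the diamond and the first edge after it. They consist of the two transition rows $\mu\pm2n$ of rule~\ref{D2} and the diamond rows $|s-\mu|\le w_n-3=2n-4$. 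This leaves the two diagonal cells $\cl{\mu\pm(w_n-1)}{0}$, which are $\cl{e+3}{0}$ and $\cl{e+7}{0}$ when $n=2$. The gap $(3,9)$ of the global host likewise leaves $\cl{6}{0}$. These are environment cells of value $2$. So the switching gap is not ``exactly filled'', and the verification you propose there cannot succeed as stated.

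These cells are correct only because one of their parents is irregular. At $\cl{\mu-w_n+1}{0}$ the middle parent is the red transition cell $\cl{\mu-w_n-1}{0}$. It lies after the closing of the first copy of $S_1$, or, for $n=2$, after the opening of the box. The two side parents are red by~\ref{P5}, respectively by~\ref{P3}, so the triple is $(1,1,1)$ and $f=2$. At $\cl{\mu+w_n-1}{0}$ the middle parent is the last diamond cell on the diagonal, with $\Gam=1$, and the side parents have $\Lam_2=2$. The triple is $(2,1,2)$ and $f=2$. At $\cl{6}{0}$ the red cell $\cl{4}{0}$ plays the same role.

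You need these cells as a separate class, the paper's class (E1), and you need their values elsewhere as well. They are parents in the checks of the transition cells before the closing of a box of level $2$. They are also parents before the opening of the copy of $S_1$ after the switching gap, including the main box $S_1$ at $e=9$.

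A smaller slip: Lemma~\ref{lem:timeline}(a) gives $e\equiv a\pmod4$ only for opening edges. Closing edges, and the virtual edge, satisfy $e\equiv a+2$. With your congruence, two checks would fail. One is the outermost cells of rule~\ref{D2} before a closing. The other is the red cells of~\ref{P5} two rows after the closing corners of first-half children.
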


The picture has red axes, white lines $x=1$ and $y=1$, and the normal environment
$\Lam_2$ away from the diagonal. The main boxes $S_1,S_2,\dots$ lie along the diagonal.
For $n\ge3$, the interior of $S_n$ contains copies of $S_{n-2},\dots,S_1$ in the
reversed environment $\Lam_0$, followed by a switching square and copies of
$S_1,\dots,S_{n-2}$ in the normal environment. The first two boxes have no children.
Each transition cell lies one or two rows before or after an edge.

The phase of the environment inside a box does not depend on the depth of nesting: the
reversed environment fills the first half of each box and the normal environment the second
half. Remark~\ref{rem:copies} explains why this makes all copies of $S_k$ inside larger boxes
translates of the main box $S_k$.

\section{Proof of Theorem \ref{thm:main}}\label{sec:proof}

By Lemma~\ref{lem:unique} and rule \ref{D0} it suffices to prove that $D$ is correct at every
cell $c=\cl sd$ with $|d|\le s-2$. Each case below is one evaluation of $f$ by
Lemma~\ref{lem:rule}. We write the triple $(D(\cx),D(\cy),D(\cz))$ followed by its image
under $f$. By the symmetry of $D$ we may assume $d\ge0$ whenever convenient.

\subsection{Preliminary facts}
Throughout, $\B=\B(n,e)\in\cB$, $w=w_n$, $L=L_n$, and $\B'$ is the parent of $\B$ in $\cB$, or
$\ast$ if $\B$ is a main box. All congruences are modulo $4$, and Lemma~\ref{lem:timeline}(a)
is used without further mention.

\begin{Penum}
\item\label{P1} $D(c)=0$ if and only if $c$ is on a border or on a line $|d|=s-2\ge1$. Even rows
contain no border cells.
\item\label{P2} \emph{The columns beside a wall are blue.} Inside $\B$, $D\cl s{\pm(w-1)}=2$ for
even $s\in[e+1,e+L-1]$. Outside $\B$, $D\cl s{\pm(w+1)}=2$ for even $s\in[e-1,e+L+1]$.

Inside: the host is $\B$, because children have half-width at most $w-4$. Among the edges of
$\cE(\B)$ only the two edges of $\B$ itself reach $|d|=w-1$ in rule \ref{D2}, at the rows
$e+1\equiv w+1$ and $e+L-1\equiv w+1$, where $\Gam\cl s{w-1}=2$. The diamond has radius $w-3$,
and rule \ref{D5} gives $2$ on even rows. Outside: the host is $\B'$, and by
Lemma~\ref{lem:timeline}(c) the only elements of $\cE(\B')$ in the time interval $[e-2,e+L+2]$
are the two edges of $\B$, whose half-width $w$ is smaller than $w+2$. The diamond of $\B'$ lies
in the switching gap of $\B'$, and rule \ref{D5} gives $2$.
\item\label{P3} \emph{The transition rows.} Around the opening edge, $D\cl{e\pm1}d=\Gam\cl{e\pm1}d$
for $|d|\le w-1$ and $D\cl{e\pm2}d=1$ for $|d|\le w-2$. The same holds around the closing edge
with $e$ replaced by $e+L$. For the virtual edge, $D\cl20=2$ and $D\cl40=1$.

These cells are on no border, since the nearest other edges and walls are at least $4$ rows
or columns away. The host of each transition cell is $\B'$ before the opening and
after the closing, and $\B$ between the edges, because children open $4$ rows after
$\B$. The relevant edge belongs to $\cE$ of that host, so rule \ref{D2} applies on
even rows and rule \ref{D3} on odd rows.
\item\label{P4} $D\cl{e-2}{\pm w}=1$, that is, the two cells two rows before the corners of an
opening edge are red. The host is $\B'$ and the row is odd. If $\B$ is in the first half of $\B'$,
the element of $\cE(\B')$ preceding $(e,w)$ is $(e-4,w+4)$ in case J1 or $(e-4,w+2)$ in case J2,
and rule \ref{D3} applies. Otherwise rule \ref{D3} gives $1$ if it applies, and rule
\ref{D5} gives $\Lam_2\cl{e-2}w=1$ because $e-2\equiv w+2$.
\item\label{P5} $D\cl{e+L+2}{\pm w}=1$, that is, the two cells two rows after the corners of a
closing edge are red. If $\B$ is in the first half of $\B'$, so that the closing is followed by
case J2 or by the switching gap, rule \ref{D5} gives $\Lam_0\cl{e+L+2}{w}=1$ because
$e+L+2\equiv w$. Otherwise the next edge is $(e+L+4,b)$ with $b\in\{w+2,w+4\}$, by cases J3,
J4 and J5, and rule \ref{D3} applies.
\end{Penum}

\subsection{Border cells}
\begin{prop}\label{prop:borders}
$D$ is correct at every border cell.
\end{prop}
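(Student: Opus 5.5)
The plan is to check, case by case, that $f(D(\cx),D(\cy),D(\cz))=0$ at every border cell $c$, since $D(c)=0$ there by rule~\ref{D1}. By Lemma~\ref{lem:rule}, $f(x,y,z)=0$ exactly when either $y=1$ and $x\ne z$, or $y=0$ and $x=z\ne0$. So each case reduces to reading off the three parent values from \ref{P1}--\ref{P5}.

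First the parity bookkeeping. Let $c$ be a border cell of $\B=\B(n,e)$ with $w=w_n$ and $L=L_n$. Since $e$ and $w$ are odd, every border cell lies on an odd row. Hence $\cx$ and $\cz$ lie on an even row and $\cy$ on an odd row. By Lemma~\ref{lem:timeline}(d) we have $|d|\le s-8$, so none of the parents is governed by rule~\ref{D0}. By symmetry of $D$ we take $d\ge0$. I split the border into four parts: opening edge off the corner, closing edge off the corner, the corners, and the wall cells strictly between the edges.

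\emph{Edges, $|d|\le w-2$.} At $s=e$, \ref{P3} gives $D(\cy)=D\cl{e-2}d=1$. It also gives $D(\cx)=\Gam\cl{e-1}{d+1}$ and $D(\cz)=\Gam\cl{e-1}{d-1}$. On the even row $e-1$ the two positions $d\pm1$ differ by $2$, so exactly one of them is $\equiv e-1\pmod 4$. Thus exactly one of the two parents is red, the triple is $(x,1,z)$ with $x\ne z$, and $f$ gives $0$. The closing edge $s=e+L$ is identical, using the other half of \ref{P3} with $e$ replaced by $e+L$.

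\emph{Corners and walls.}
\begin{itemize}
\item At the opening corner $\cl e{w}$, the parents are as follows. $\cx=\cl{e-1}{w+1}$ is blue by \ref{P2} (outside). $\cy=\cl{e-2}{w}$ is red by \ref{P4}. $\cz=\cl{e-1}{w-1}$ equals $\Gam\cl{e-1}{w-1}$ by \ref{P3}, which is red because $(e-1)-(w-1)=e-w\equiv0$. This gives $(2,1,1)\mapsto0$.
\item At the closing corner $\cl{e+L}{w}$, $\cx$ is blue by \ref{P2} (outside) and $\cy=\cl{e+L-2}{w}$ is a wall cell, hence $0$. Also $\cz=\Gam\cl{e+L-1}{w-1}$ is blue, since $e+L-w\equiv2$ by Lemma~\ref{lem:timeline}(a). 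This gives $(2,0,2)\mapsto0$.
\item At a wall cell $\cl sw$ with $e<s<e+L$, $s$ odd, we have $s-2\ge e$, so $\cy$ is again a border cell with value $0$. Both $\cx=\cl{s-1}{w+1}$ and $\cz=\cl{s-1}{w-1}$ are blue by the two halves of \ref{P2}. This gives $(2,0,2)\mapsto0$.
\end{itemize}

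The only real obstacle is making sure the cited facts apply in the ranges used, and this is already settled by the preliminary facts. The ranges are $s-1\in[e+1,e+L-1]$ for the inside half of \ref{P2}, $s-1\in[e-1,e+L+1]$ for the outside half, and the range $|d|\le w-1$ of \ref{P3}. I would also note that the corner computations need the congruences $e\equiv w$ and $e+L\equiv w+2\pmod4$, and that these are exactly where a wrong phase would break the argument. Every case is a single evaluation of $f$, and together the cases cover every border cell.
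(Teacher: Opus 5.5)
Your proof is correct and follows essentially the same route as the paper: the triples $(1,1,2)$ or $(2,1,1)$ on the edge interiors, $(2,1,1)$ at the opening corner, and $(2,0,2)$ on the walls. The only difference is minor. The paper handles the closing corner together with the wall cells and takes the inner parent to be blue by the inside half of the fact that the columns beside a wall are blue. You read that parent off the transition row before the closing edge, using $e+L\equiv w+2\pmod 4$, which is the same computation that proves the column fact.
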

\begin{proof}
\emph{Wall cells} $c=\cl sw$ with $e+2\le s\le e+L$, which include the closing corner. By
\ref{P1} the parent $\cy$ is white, and by \ref{P2} the parents $\cx$ and $\cz$ are blue, so
the triple is $(2,0,2)$ and $f(2,0,2)=0$.

\emph{Inner cells of the opening edge}, $c=\cl ed$ with $|d|\le w-2$. The parent $\cy=\cl{e-2}d$
is red and $\cx,\cz$ are the $\Gam$-cells of row $e-1$ by \ref{P3}, of which one is red and the
other blue. The triple is $(1,1,2)$ or $(2,1,1)$, and $f=0$.

\emph{Opening corner} $c=\cl ew$. The parent $\cy=\cl{e-2}{w}$ is red by \ref{P4}, the parent
$\cz=\cl{e-1}{w-1}$ has $\Gam=1$ because $e-1\equiv w-1$, and the parent $\cx=\cl{e-1}{w+1}$ is
blue by \ref{P2}. The triple is $(2,1,1)$ and $f=0$.

\emph{Inner cells of the closing edge}, $c=\cl{e+L}d$ with $|d|\le w-2$. As for the opening
edge, using the rows $e+L-1$ and $e+L-2$ of \ref{P3}.
\end{proof}

\subsection{Transition cells}
\begin{prop}\label{prop:trans}
$D$ is correct at every transition cell.
\end{prop}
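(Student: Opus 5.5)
The plan is a direct case check, as in Proposition~\ref{prop:borders}. Fix a transition cell $c=\cl sd$ with host $H$ and the edge $(e,a)\in\cE(H)$ that triggers rule \ref{D2} or \ref{D3}. By symmetry assume $d\ge0$. Split according to the offset $s-e\in\{+1,+2,-1,-2\}$. In each case, identify the three parents from \ref{P1}--\ref{P5}, Lemma~\ref{lem:lattices} and the congruence $e\equiv w\pmod4$ of Lemma~\ref{lem:timeline}(a), and then evaluate $f$ once by Lemma~\ref{lem:rule}. The virtual edge $(3,1)$ is handled separately at the start by computing $D$ on rows $\le 5$ directly; this is a finite check using \ref{D0} and \ref{P3}.

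\emph{Rows after an edge.}
\begin{itemize}
\item For $s=e+1$ and $|d|\le a-1$, both $\cx$ and $\cz$ lie on the edge row with $|d\pm1|\le a$, so they are white by \ref{P1}. The parent $\cy$ lies on row $e-1$ and carries $\Gam$ by \ref{P3}. Since $\Gam\cl{e-1}d$ and $\Gam\cl{e+1}d$ are opposite on even rows, the values $f(0,1,0)=2$ and $f(0,2,0)=1$ give exactly $\Gam(c)$.
\item For $s=e+2$ and $|d|\le a-2$, the parent $\cy$ is white (an edge cell). The parents $\cx$ and $\cz$ are the $\Gam$-cells of row $e+1$ at positions $d\pm1$, which differ by $2$, so one is red and the other blue. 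Then $f(1,0,2)=1$, as required.
\end{itemize}
These two cases are uniform and need only \ref{P1} and \ref{P3}.

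\emph{Rows before an edge.}
\begin{itemize}
\item For $s=e-1$, the parents $\cx$ and $\cz$ lie on row $e-2$ and are red by \ref{P3} for $|d|\le a-3$. The parent $\cy$ lies on the even row $e-3$. There it is either an environment cell, hence blue by Lemma~\ref{lem:lattices}, or, when the preceding edge is $(e-4,b)$ as in Table~\ref{tab:J}, a $\Gam$-cell of row $e'+1$. In both cases the triple $(1,y,1)$ gives $f=2$ or $1$ according to $y$, and the required value is $\Gam\cl{e-1}d$. Agreement follows from comparing $\Gam$ on rows $e-3$ and $e-1$ modulo $4$.
\item For $s=e-2$, the parents $\cx$ and $\cz$ lie on the even row $e-3$ and $\cy$ on row $e-4$. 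The value $1$ requires the triple $(2,2,2)$, or $(1,y,2)$ with $y\ne0$, or $(0,0,0)$-type patterns coming from an edge at $e-4$. So I would subdivide by whether row $e-4$ is an environment row of phase $\Lam_0$ or $\Lam_2$, an edge row (cases J1--J5), or lies in the switching gap.
\end{itemize}
For each subcase, use the congruences $e\equiv w$, $\mu\equiv w+1$ and the half of $H$ containing $c$ to read off the colour of $\cy$ from Definition~\ref{def:lattices}.

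\emph{The main obstacle: extreme columns.} I expect the hardest part to be the cells with $|d|=a-1$ on row $e-1$ and $|d|=a-2$ on row $e-2$, especially before a closing edge. There one parent is a wall cell (white, by \ref{P1}) or a red corner-neighbour cell from \ref{P4}/\ref{P5}, rather than a transition cell. Similarly, after a J1 or J4 jump the half-widths $a$ and $b$ differ by $4$, so the two transition zones overlap only partially. For these I would list the triple explicitly in each of the cases J1--J5, using \ref{P2} for the blue columns beside walls and \ref{P4}, \ref{P5} for the red cells two rows from corners. The environment phase is checked through $e\equiv w\pmod4$ for each case, exactly as in the verification of \ref{P4} and \ref{P5}.
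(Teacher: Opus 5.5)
Your split into rules \ref{D2}/\ref{D3} before and after the responsible edge is the paper's. Your two after-edge cases are exactly its argument: the triple $(0,\Gam(\cy),0)$ on row $e+1$, and $(1,0,2)$ or $(2,0,1)$ on row $e+2$. The gap is the before-edge half. That half carries the substance of the proposition, and you only sketch it, partly on a wrong basis. Your criterion for the value $1$ on row $e-2$ is incorrect. By Lemma~\ref{lem:rule}, $f=1$ exactly when $y=2$ and $x=z$, or when $y=0$ and $\{x,z\}=\{1,2\}$. So $(1,y,2)$ with $y\ne0$ never gives $1$, since $f(1,1,2)=0$ and $f(1,2,2)=2$; and $f(0,0,0)=2$.

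What is missing is the reduction the paper uses. A cell $\cl{e-2}{d}$ that is also a \ref{D3}-cell after the preceding edge $(e-4,a')$ is already handled. In cases J1 and J2 that edge is wider, so nothing remains there, and only a short explicit list is left:
\begin{itemize}
\item In J3 and J5 at $|d|=a-2$, and in J4 at $|d|=a-4$, the cell lies two rows after the closing corner of the preceding box. Then $\cy$ is that white corner, $\cx$ is blue by \ref{P2}, and $\cz$ has $\Gam=1$, giving $(2,0,1)$. So the white parent from the edge at $e-4$ is the middle entry, not $x=z=0$.
\item In J4 at $|d|=a-2$, where $\cy$ has $\Lam_2=2$, the triple is $(2,2,2)$.
\item At the cells $\cl{e'+8}{\pm1}$ before the closing of a level-$2$ box $\B(2,e')$, the triple is also $(2,2,2)$.
\end{itemize}

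On row $e-1$ there are two further gaps.
\begin{itemize}
\item Your claim that a blue environment parent $\cy$ agrees ``by comparing $\Gam$ modulo $4$'' is not automatic. A blue $\cy$ gives $f(1,2,1)=1$, so you need $\Gam(\cy)=2$, and that depends on the position. You must locate the cells where $\cy$ escapes the \ref{D2}-rows of the preceding edge and check the congruence there. These are J4 at $|d|=a-3$, beside the last child's wall with $e-3\equiv a-1$, and $\cy=\cl{\mu+2}0$ before the closing of a level-$2$ box. Such cells are blue by rule \ref{D5} on an even row, not by Lemma~\ref{lem:lattices}.
\item Treating the outermost cells $|d|=a-1$ ``in each of the cases J1--J5'' misses the openings that are not preceded by a $4$-row junction. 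These are the second copy of $S_1$ after a switching gap, and $S_1$ itself at $e=9$. There $\cy$ is the diagonal environment cell $\cl{\mu+w_H-1}0$ or $\cl60$, and your finite check stops at row $5$.
\end{itemize}

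For openings the triple is $(1,2,1)$. Here $\cx$ is red by \ref{P4}, and $\cz$ is red by \ref{P3}, or by mirror symmetry when $a=1$. The blue $\cy$ comes from \ref{P3} in J1 and J2 (since $e-3\equiv a+1$), from \ref{P2} in J3 and J5, and from the two diagonal cells above otherwise. Before a closing the triple is $(0,2,1)$, with $\cx$ a wall cell, $\cz$ red by \ref{P3} and $\cy$ blue by \ref{P2}. None of these checks is hard. Until the leftover cells are listed and evaluated with the correct values of $f$, the before-edge half is not proved.
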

\begin{proof}
Let $(e,a)\in\cE(H)$ be the edge responsible for $c$ in rule \ref{D2} or \ref{D3}.

\emph{Rule \ref{D2} after an edge}: $c=\cl{e+1}d$ with $|d|\le a-1$. The parents $\cx$ and
$\cz$ lie on the edge and are white, and $\cy=\cl{e-1}d$ has the value $\Gam(\cy)$ by \ref{P3}.
By Lemma~\ref{lem:rule}, $f(0,1,0)=2$ and $f(0,2,0)=1$. Since $\Gam(\cy)=2$ if and only if
$e+1\equiv d$, which is the condition $\Gam(c)=1$, we get $D(c)=\Gam(c)$. The cell $\cl40$ is
of this kind, with the virtual edge.

\emph{Rule \ref{D2} before an edge}: $c=\cl{e-1}d$ with $|d|\le a-1$, and $c$ is not of the
previous kind. Then $(e,a)$ is the opening of a child of $H$, or the closing of $H$, in which
case $H$ has level at least $2$. The cell $\cl20$, before the virtual edge, is treated
directly: its parents are the axis cells $\cl1{\pm1}$ and $\cl00$, so the triple is $(1,1,1)$
and $f=2=\Gam\cl20$. In the remaining cases we distinguish the position of $c$.
\begin{romenum}
\item $|d|\le a-3$. Then $\cx$ and $\cz$ are red by \ref{P3} applied to row $e-2$, and
$f(1,y,1)$ is $2,1,0$ for $y=1,2,0$. It therefore suffices to show $D(\cy)=\Gam(\cy)$ for
$\cy=\cl{e-3}{d}$. In cases J1, J2, J3 and J5 the cell $\cy$ is a \ref{D2}-cell after the
previous edge $(e-4,a')$, and $a'\ge a-2$. In case J4 the same holds for $|d|\le a-5$, and for
$|d|=a-3$ \ref{P2} gives $D(\cy)=2=\Gam(\cy)$, since $e-3\equiv a-1$. Before the closing of a
box of level $2$, $\cy=\cl{\mu+2}0$ has $D(\cy)=2$ by class (E1) below and $\Gam(\cy)=2$.
\item $|d|=a-1$ with $d\ge0$, and $(e,a)$ is an opening. The parent $\cx=\cl{e-2}a$ is red by
\ref{P4}. The parent $\cz=\cl{e-2}{a-2}$ is red, by \ref{P3} if $a\ge3$ and as the mirror
image of $\cx$ if $a=1$. The parent $\cy=\cl{e-3}{a-1}$ is blue. In cases J1 and J2,
this follows from \ref{P3} because $\Gam=2$ as $e-3\equiv a+1$.
In cases J3 and J5, use \ref{P2}. After the switching gap,
$\cy=\cl{\mu+w_H-1}0$, and for $S_1$ at $e=9$, $\cy=\cl60$.
Both cells are blue by class (E1) below.
The triple is $(1,2,1)$, so $f=1=\Gam(c)$, since
$e-1\equiv a-1$.
\item $|d|=a-1$ with $d\ge0$, and $(e,a)$ is the closing of $H$, so $a\ge3$. The parent
$\cx=\cl{e-2}a$ is a wall cell, $\cz$ is red by \ref{P3}, and $\cy$ is blue by \ref{P2}. The
triple is $(0,2,1)$, so $f=2=\Gam(c)$, since $e-1\equiv a+1$.
\end{romenum}

\emph{Rule \ref{D3} after an edge}: $c=\cl{e+2}d$ with $|d|\le a-2$. The parent $\cy$ is on the
edge, and $\cx,\cz$ are $\Gam$-cells of row $e+1$, one red and one blue. The triple is
$(1,0,2)$ or $(2,0,1)$, and $f=1$.

\emph{Rule \ref{D3} before an edge}, not of the previous kind: $c=\cl{e-2}d$ with $|d|\le a-2$.
This happens in three situations. In cases J3 and J5 with $|d|=a-2$, and in case J4 with
$|d|=a-4$, the cell belongs to class (E3) below and is treated there. In case J4 with $|d|=a-2$,
the parent $\cy=\cl{e-4}{a-2}$ has $\Lam_2=2$, since $e-4\equiv a+2$, and $\cx,\cz$ are blue by
\ref{P2}, so the triple is $(2,2,2)$ and $f=1$. Before the closing of a box $\B(2,e')$ of level
$2$, $c=\cl{e'+8}{\pm1}$, the parent $\cy=\cl{e'+6}1$ has $\Lam_2=2$, the parent $\cz=\cl{e'+7}0$
is blue by class (E1), and $\cx$ is blue by \ref{P2}, so the triple is $(2,2,2)$ and $f=1$.
\end{proof}

\subsection{Switching diamonds}
\begin{prop}\label{prop:diamond}
$D$ is correct at every cell of a switching diamond.
\end{prop}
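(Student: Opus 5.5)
The plan is a case analysis on the position of the cell $c=\cl sd$ inside the switching diamond. Let $H=\B(n,e)$ with $n\ge2$, put $w=w_n$, $\mu=\mu_H$ and $r=w-3$, and assume $|s-\mu|+|d|\le r$ and $d\ge0$ by symmetry. The target value is $D(c)=\Gam(c)$, and the main tool is Lemma~\ref{lem:lattices}(a): if all three parents of $c$ carry their $\Gam$-values, then $D$ is correct at $c$. So the whole proof reduces to understanding the parents that leave the diamond.

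The first step is to pin down the value $D$ takes at every parent that lies outside the diamond. By Lemma~\ref{lem:timeline}(b),(c) the switching gap of $H$ is the row interval $(\mu-2n-1,\mu+2n+1)$, while the diamond occupies the rows $\mu\pm(2n-4)$. Hence every parent of a diamond cell lies in a row $s'$ with $\mu-2n+2\le s'\le\mu+2n-4$. Such a parent is therefore at least $3$ rows from the edges that bound the gap, and so it is neither a border cell nor a transition cell of rules \ref{D2} or \ref{D3}. Its host is $H$, since no child meets these rows. For $n=2$ the diamond is the single cell $\cl{\mu}0$ with $\mu=e+5$, and its parents lie in rows $e+3$ and $e+4$, which are again not transition rows. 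Consequently every parent outside the diamond is an environment cell. Its value is $\Lam_0$ if it lies in a row $s'<\mu$, and $\Lam_2$ if $s'\ge\mu$. Throughout I will use the congruence $\mu\equiv w+1\pmod4$ from Lemma~\ref{lem:timeline}(a).

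Next I split the diamond into three groups of cells.

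\emph{Interior cells.} These are the cells whose three parents all lie in the diamond, and Lemma~\ref{lem:lattices}(a) settles them.

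\emph{Cells on the lower rim.} These are the cells with $s\le\mu$ and $(\mu-s)+d=r$, possibly together with cells just inside it. At such a cell, $\cy$ and the outer parent $\cx=\cl{s-1}{d+1}$ fall outside the diamond and carry $\Lam_0$-values. The inner parent $\cz$ may still carry a $\Gam$-value. I will evaluate the parity of each of these parents using $s-d\equiv\mu-r\equiv w+1-(w-3)\equiv0\pmod 4$ along the rim, which shows that the rim lies on the central lattice in the right phase. I will then compute the triple and apply Lemma~\ref{lem:rule}. This requires separating even rows from odd rows, and treating the bottom vertex $\cl{\mu-r}0$ separately. At the bottom vertex $\cx$ and $\cz$ are mirror images of each other, so Lemma~\ref{lem:lattices}(c) must be handled by hand.

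\emph{Cells with $s>\mu$.} For these cells the parents $\cx$ and $\cz$ lie in row $s-1$ and $\cy$ lies in row $s-2$, so they are at most as far from $\cl{\mu}0$ as $c$ is, except in the rows $s=\mu+1$ and $s=\mu+2$. In those two rows the parents in rows below $\mu$ can be on the far side of the lower rim. I will check these few rows directly, again with $\Lam_0$ below $\mu$ and $\Gam$ inside.

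I expect the main obstacle to be the lower rim. There the reversed environment $\Lam_0$ must feed into $\Gam$ consistently on both parities of rows, and the diagonal cell at the bottom vertex is exactly the situation of Lemma~\ref{lem:lattices}(c), where the lattice recurrence fails. My first approach is to tabulate the four residues of $s$ modulo $4$ relative to $\mu$. For each residue I will write down the triple $(D(\cx),D(\cy),D(\cz))$, so that every rim cell becomes a single evaluation of $f$, in the same style as Propositions~\ref{prop:borders} and~\ref{prop:trans}.
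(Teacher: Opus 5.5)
Your plan matches the paper's proof. Lemma~\ref{lem:lattices}(a) handles every cell whose parents stay in the diamond. On the lower rim $s\le\mu$, $|s-\mu|+|d|=r$, where $s\equiv|d|\pmod 4$ and every outside parent carries $\Lam_0$, the triples are $(2,2,2)\mapsto1$ on even rows and $(2,2,1)\mapsto2$ on odd rows. At the bottom vertex, Lemma~\ref{lem:lattices}(c) in fact applies directly and gives $1=\Gam(c)$.

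Your separate check of the rows $\mu+1$ and $\mu+2$ is unnecessary. The quantity $\delta=|s-\mu|+|d|$ is always even, and for $s>\mu$ every parent satisfies $\delta\le\delta(c)$; at $s=\mu+1$, for instance, $\delta(\cy)=1+|d|=\delta(c)$. So no parent of such a cell leaves the diamond, and no cell ``just inside'' the rim needs separate attention.
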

\begin{proof}
Let $H=\B(n,e)$ with $n\ge2$, $\mu=\mu_H$, $r=w_n-3$, and let $\delta(c)=|s-\mu|+|d|$. Then
$\delta$ takes even values only and $\mu-r\equiv0$.

If the three parents of $c$ lie in the diamond, Lemma~\ref{lem:lattices}(a) applies. For
$s>\mu$ the parents satisfy $\delta\le\delta(c)$, and for $s\le\mu$ and $\delta(c)\le r-2$ they
satisfy $\delta\le\delta(c)+2\le r$. It remains to treat the cells with $s\le\mu$ and
$\delta(c)=r$, for which $s\equiv|d|\pmod4$.

The parents of such a cell that lie outside the diamond lie on rows $\ge\mu-r-2$, later than
the transition rows of the edge that precedes the diamond, so they are environment cells with
value $\Lam_0$. Take $d\ge0$. If $s$ is even, then $\Gam(c)=1$, the parent $\cx=\cl{s-1}{d+1}$
has $\Lam_0=2$ because $s\not\equiv d+2$, the parent $\cy$ is blue because its row is even,
and $\cz$ is a diamond cell on an odd row if $d\ge2$ and the mirror image of $\cx$ if $d=0$, so
blue in both cases. The triple is $(2,2,2)$ and $f=1$. If $s$ is odd, then $\Gam(c)=2$, the
parent $\cx$ is blue because its row is even, the parent $\cz=\cl{s-1}{d-1}$ is in the diamond
with $\Gam=1$, and $\cy=\cl{s-2}{d}$ has $\Lam_0=2$ because $s-2\not\equiv d$. The triple is
$(2,2,1)$ and $f=2$.
\end{proof}

\subsection{Environment cells}
Let $c=\cl sd$ be an environment cell with host $H$. Its \emph{phase} $\varepsilon$ is $0$ if $H$
is a box and $s\le\mu_H$, and $2$ otherwise. On the even row $s=\mu_H$ both lattices give
$2$, so the convention does not change any value. A parent $p$ of $c$ is \emph{irregular} if
$D(p)\ne\Lam_\varepsilon(p)$. If $d\ne0$ and no parent is irregular, $D$ is correct at $c$ by
Lemma~\ref{lem:lattices}(b). The \emph{successors} of a cell $\cl{s'}{d'}$ are the cells
$\cl{s'+1}{d'\pm1}$ and $\cl{s'+2}{d'}$, which have it as a parent.

\begin{lem}\label{lem:hosts}
Let $c$ be an environment cell and $p$ a parent of $c$ that is not white. Then $p$ has the
same host as $c$.
\end{lem}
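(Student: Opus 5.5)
We argue by contradiction: suppose $p$ and $c$ have different hosts, and show that $p$ must then be white. Write $c=\cl sd$. Each parent $p\in\{\cx,\cy,\cz\}$ is within one or two rows of $c$ and within one column of it: $|s_p-s|\le2$ and $|d_p-d|\le1$. The host of a cell is determined by the boxes of $\cB$ whose interior contains it. By Lemma~\ref{lem:timeline}(c) these boxes are nested or disjoint, and disjoint ones are at least $4$ rows apart. So if $p$ and $c$ have different hosts, there is a box $\B=\B(n,e)\in\cB$ whose interior contains exactly one of them. We fix such a $\B$ and show that $p$ lies on its border or within two rows or columns of it. The argument then splits into two cases.

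\emph{Case 1: $c$ lies in the interior of $\B$ and $p$ does not.} Then $p$ is on the border of $\B$ or outside it. If $p$ is on the border, it is white by \ref{P1}, contrary to the hypothesis. If $p$ is outside, then, since the interior of $\B$ is $e<s<e+L_n$, $|d|<w_n$, and each parent moves by at most $2$ in $s$ and $1$ in $d$, crossing from the interior to the exterior of $\B$ without touching the border would require a jump that is impossible. Concretely: $\cy$ jumps two rows, so it could skip the opening row only if $c$ were at $s=e+1$; but then $\cy$ is at $s=e-1$ with $|d|<w_n$, and $\cx,\cz$ lie on the opening row and are white. This is not a contradiction for $\cy$, so this case needs care. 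However, a cell with $s=e+1$ and $|d|\le w_n-1$ is a transition cell (rule \ref{D2}), not an environment cell. Likewise, an interior cell with $|d|=w_n-1$ has its outer parent on the wall. So whenever $c$ is an environment cell, its parents stay in the closed box, and any parent outside the interior is white.

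\emph{Case 2: $p$ lies in the interior of $\B$ and $c$ does not.} Then $c$ is one step after $p$. Either $c$ lies on the closing row or on a wall, hence on a border and not an environment cell, or $c$ is at $s=e+L_n+1$ with $p=\cy$ at $s=e+L_n-1$. In the latter case $c$ is a \ref{D2} transition cell after the closing edge of $\B$, since $(e+L_n,w_n)\in\cE(\B')$ for the host $\B'$ of $c$. Again $c$ is not an environment cell. The same reasoning applies to the global host $\ast$, using the edges in $\cE(\ast)$ and rule \ref{D0} near the axes together with Lemma~\ref{lem:timeline}(d).

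The main obstacle is making sure that rules \ref{D2} and \ref{D3} really exclude $c$ in every boundary-crossing configuration. In particular we need that the edges of $\B$ belong to $\cE$ of the relevant host (the parent of $\B$ outside, $\B$ itself inside), and that the cell widths match, $|d|\le w_n-1$. Definition~\ref{def:edges} provides this membership, and \ref{P3} provides the widths. The first thing to check is the corner situations, $|d|=w_n\pm1$ near $s=e\pm1$. There the problematic parent lands exactly on a corner of $\B$, which is a border cell and hence white.
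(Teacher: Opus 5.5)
Your proof is correct and takes the same route as the paper. It uses the same two cases: $p$ is interior to a box whose interior misses $c$, or $c$ is interior to a box whose interior misses $p$. In both, the only way a parent can leave the box without landing on its white border is $\cy$ skipping an edge row, which forces $s=e+1$ or $s=e+L_n+1$ and makes $c$ a rule \ref{D2} transition cell. You also make explicit two points the paper leaves implicit: why different hosts produce such a box, and why the skipped edge lies in $\cE$ of the host of $c$, which is the content of \ref{P3}.
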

\begin{proof}
Suppose first that $p$ lies in the interior of a box $X$ whose interior does not contain $c$.
If $p=\cy$, then $e_X<s-2<e_X+L_X$ and $|d|<w_X$. Since $c$ is not white, $c$ is not on the
border of $X$, so $s>e_X+L_X$, hence $s=e_X+L_X+1$ and $c$ is a \ref{D2}-cell after the closing
of $X$, not an environment cell. If $p=\cx$ or $p=\cz$, then $|d|\le|d\pm1|+1\le w_X$ and
$e_X<s\le e_X+L_X$, so $c$ lies in $X$, and since $c$ is not white it lies in the interior of
$X$, a contradiction.

Suppose next that $c$ lies in the interior of a box $X$ and $p$ does not. Then $p$ is on the
border of $X$ and white, or $p=\cy$ with $s-2<e_X$, in which case $s=e_X+1$ and $c$ is a
\ref{D2}-cell after the opening of $X$.
\end{proof}

\begin{lem}\label{lem:Eclasses}
Let $c$ be an environment cell that has an irregular parent or lies on the diagonal $d=0$.
Then $c$ belongs to one of the classes (E1) to (E6) listed in the proof of
Proposition~\ref{prop:env}.
\end{lem}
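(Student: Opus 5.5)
The plan is to classify environment cells $c=\cl sd$ by the nature of the irregular parent, using Lemma~\ref{lem:hosts} to keep everything inside one host $H$. By that lemma, a non-white parent $p$ of $c$ has the same host $H$. Such a $p$ is therefore either an environment cell of $H$, a transition cell of some edge in $\cE(H)$, or a cell of the switching diamond of $H$. A white parent is a border cell or lies on $|d|=s-2$. I take the classes (E1)--(E6) to be the following.

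\begin{itemize}
\item[(E1)] Diagonal environment cells $d=0$.
\item[(E2)] Cells adjacent to a wall, whose parent lies on a border.
\item[(E3)] Cells one or two rows after or before transition rows, whose parent is a transition cell.
\item[(E4)] Cells touching the switching diamond from outside.
\item[(E5)] Cells whose parents straddle the phase change at $s=\mu_H$.
\item[(E6)] Cells near the lines $|d|=s-2$, together with the cells near the virtual edge.
\end{itemize}

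I will go through the possible irregular parents $p$ and place $c$ accordingly.

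\emph{White parent.} Here $D(p)=0\ne\Lam_\varepsilon(p)$. If $p$ lies on $|d|=s-2$ by \ref{D0}, then $c$ is near the axes, giving (E6). Otherwise $p$ is on the border of a box $X$. Since $c$ is not a transition cell or border cell, $c$ must be outside $X$ and adjacent to a wall ($|d|=w_X+1$ or $w_X+2$), giving (E2), or adjacent to an edge corner. The corner case is excluded, because such cells are transition cells by \ref{D2}/\ref{D3}, or they are the cells of \ref{P4}/\ref{P5}, which I also put in (E2).

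\emph{Transition parent.} $p$ lies within two rows of some edge $(e',a')\in\cE(H)$, so $c$ lies within four rows of it. Since $c$ is not itself a transition cell, either $|d|$ exceeds the ranges of \ref{D2}/\ref{D3}, which puts $c$ near a corner, or $s=e'\pm3,e'\pm4$ with $|d|$ small. Both cases form (E3). The gap facts of Lemma~\ref{lem:timeline}(b) and Table~\ref{tab:J} bound which edges can be involved.

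\emph{Diamond parent.} $c$ borders the diamond, giving (E4).

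\emph{Environment parent.} The parent has the other phase, which happens only when $s$ is near $\mu_H$. This is (E5). Since $\mu_H$ lies in the switching gap and the diamond occupies $|s-\mu|\le w_n-3$, this class may be absorbed into (E4) or be empty off the diagonal.

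\emph{Phase parity check.} Before finalizing, I will confirm that an environment parent with the same phase is never irregular. This is immediate from the definition of \ref{D5}, since $\Lam_\varepsilon$ only depends on the cell, except for the convention on the row $s=\mu_H$, where both lattices agree.

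The main obstacle is the bookkeeping in the transition case. Edges of $\cE(H)$ are four rows apart, so a cell can be within reach of two edges, including the J1--J5 half-width jumps and the exceptional spacings of $2$, $6$ and $10$ rows. The first thing I will try is to fix the nearest preceding edge and split by the case of Table~\ref{tab:J}. For each case I will compute the possible $(s-e',|d|-a')$ offsets at which $c$ is an environment cell with a transition parent, and verify that these offsets form a finite list, which is then recorded as (E3).
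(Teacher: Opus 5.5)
\textbf{There is a gap.} Running through the kinds of irregular parent and listing the environment cells among their successors is the paper's strategy. But the proposal does not prove the lemma, because you replace the classes with your own. The classes (E1)--(E6) are the explicit families of Proposition~\ref{prop:env}, each checked there by one evaluation of $f$:
\begin{itemize}
\item $\cl{\mu\pm(w_n-1)}0$ and $\cl60$;
\item $\cl s{w-1}$ and $\cl s{w+1}$ on the stated even rows;
\item $\cl{e+L+2}{w}$;
\item $\cl{e+3}{w-3}$ and $\cl{e+4}{w-2}$ in case J1, and $\cl{c_0+3}{a-1}$ in case J2;
\item the cells with $s>\mu$ and $(s-\mu)+|d|=w_n-1$;
\item $\cl s{\pm(s-4)}$.
\end{itemize}
Defining (E1) as ``all diagonal environment cells'' or (E3) as ``cells with a transition parent'' turns the lemma into a restatement of its hypothesis. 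It also leaves Proposition~\ref{prop:env} unsupported. On $d=0$, for instance, Lemma~\ref{lem:lattices}(c) shows the lattice argument fails, so one must know that only the listed diagonal cells are environment cells.

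That identification is exactly the part you defer, and it needs the following arguments.
\begin{itemize}
\item \emph{Diagonal.} In a $4$-row gap of $\cE(H)$ the even rows $e'+1,e'+3$ are \ref{D2}-rows reaching $d=0$. In the switching gap, or $(e,e+10)$ for $n=2$, the even rows are two \ref{D2}-rows, the diamond rows and the two rows $\mu\pm(w_n-1)$. For $\ast$ the gap $(3,9)$ leaves only $\cl60$.
\item \emph{Transition parents.} Successors of cells before an edge are \ref{D2}-cells or edge cells. After an edge you must split by the next element of $\cE(H)$: J3, J4, J5 produce nothing, J2 produces $\cl{e+3}{\pm(a-1)}$, and J1 produces three pairs of cells. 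Boxes of levels $1$ and $2$, the switching gap and the virtual edge need separate treatment.
\item \emph{Diamond.} A successor on a row $s\le\mu$ has $\delta(c)\le\delta(p)$. This forces $s>\mu$ and $\delta(c)=w_n-1$.
\item \emph{Other phase.} This case is not ``possibly empty''. It consists of the cells $\cl{\mu+1}{\pm(w_n-2)}$, which lie in the below-diamond class.
\end{itemize}

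The white-parent case also contains an error. You claim $c$ must lie outside the box $X$, at $|d|=w_X+1$ or $w_X+2$. But for even $s$ with $e+3\le s\le e+L-3$, the cell $\cl s{w-1}$ inside $X$ is an environment cell with host $X$. Its parent $\cx=\cl{s-1}{w}$ is a wall cell. This is the inner half of (E2), verified by $f(0,2,z)=2$ for $z\ne0$. Conversely, $|d|=w_X+2$ never arises, since successors of a wall cell have $|d|\in\{w-1,w,w+1\}$. The successor $\cl{e+L+2}{w}$ of a closing corner is a separate class, (E3), with triple $(2,0,1)$, not a wall-adjacent cell.

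Finally, your list of non-white parents omits the axis cells of rule~\ref{D0}. They contribute nothing, since their successors lie on the axes or the white lines, but the case still has to be dismissed.
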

\begin{proof}
By Lemma~\ref{lem:hosts}, an irregular parent $p$ of $c$ is white, or lies in the host $H$ of
$c$ and is one of the following: a cell of an axis, a transition cell, a diamond cell, or an
environment cell of the other phase. We go through the successors of each kind of cell and
keep those successors that are environment cells.

\emph{Axis cells.} The successors of $\cl{s'}{\pm s'}$ lie on the axes or on the white lines.

\emph{White lines.} The successors of $\cl{s'}{s'-2}$ are $\cl{s'+1}{s'-1}$ on the white line,
and $\cl{s'+1}{s'-3}$ and $\cl{s'+2}{s'-2}$, which have $|d|=s-4$ and are $\cl40$, a
\ref{D2}-cell, or cells of class (E6).

\emph{Walls.} Let $p=\cl{s'}{w}$ be a wall cell of $\B=\B(n,e)$ with $s'\in[e,e+L]$. The
successor $\cl{s'+1}{w+1}$ lies outside $\B$ on a row of $[e+1,e+L+1]$ and belongs to class
(E2). The successor $\cl{s'+1}{w-1}$ is a \ref{D2}-cell of $\B$ if $s'+1$ is $e+1$ or $e+L-1$, a
\ref{D2}-cell after the closing if $s'+1=e+L+1$, and otherwise belongs to class (E2). The
successor $\cl{s'+2}{w}$ is a wall cell unless $s'=e+L$, in which case it belongs to class (E3).

\emph{Inner edge cells.} The successors of an inner cell $\cl{e'}{d'}$, $|d'|\le a-2$, of an
edge $(e',a)$ are \ref{D2}-cells and \ref{D3}-cells after that edge.

\emph{Transition cells} of an edge $(e,a)\in\cE(H)$. The successors of the cells before the
edge, on rows $e-1$ and $e-2$, lie on the rows $e-1$ with $|d|\le a-1$, $e$ with $|d|\le a$,
and $e+1$ with $|d|\le a-1$, so they are \ref{D2}-cells, edge cells, or \ref{D2}-cells after
the edge. The successors of the cells after the edge, on rows $e+1$ and $e+2$, lie on the
rows $e+2$ with $|d|\le a$, $e+3$ with $|d|\le a-1$ and $e+4$ with $|d|\le a-2$. On row $e+2$
the cells with $|d|\le a-2$ are \ref{D3}-cells and the cells $\cl{e+2}{\pm a}$ are wall cells if
$(e,a)$ is an opening and cells of class (E3) if it is a closing. For the rows $e+3$ and $e+4$
let $(e+g,b)$ be the next element of $\cE(H)$.
\begin{itemize}
\item In cases J3, J4 and J5, where $g=4$ and $b\ge a+2$, the rows $e+3$ with $|d|\le b-1$ and
$e+4$ with $|d|\le b$ consist of \ref{D2}-cells and edge cells.
\item In case J2, where $b=a-2$, the only remaining cells on row $e+3$ are $|d|=a-1$, of class
(E4), and on row $e+4$ all cells with $|d|\le a-2=b$ are edge cells.
\item In case J1, where $b=a-4$, the remaining cells are $\cl{e+3}{\pm(a-3)}$ of class (E4),
$\cl{e+3}{\pm(a-1)}$ of class (E2), and $\cl{e+4}{\pm(a-2)}$ of class (E4).
\item After the opening of a box of level $2$, where $a=3$, row $e+3$ gives $\cl{e+3}0$ of
class (E1) and $\cl{e+3}{\pm2}$ of class (E2). Row $e+4$ gives $\cl{e+4}{\pm1}$, whose
non-white parents on rows $e+2$ and $e+3$ are regular, because $D\cl{e+2}{\pm1}=1=\Lam_0$ as
$e+2\equiv1$, and $D\cl{e+3}{0}=D\cl{e+3}{\pm2}=2$.
\item After the opening of a box of level $1$, where $a=1$, the rows $e+1$ and $e+2$ belong to
the box, and the next cell $\cl{e+3}0$ is a \ref{D2}-cell after its closing.
\item After the closing of the first copy of $S_1$, before the switching gap, row $e+3$ gives
$\cl{e+3}0$ of class (E1). There are no \ref{D3}-cells after an edge of half-width $1$.
The cells $\cl{e+2}{\pm1}$ are regular, with value $1=\Lam_0$ by \ref{P5}, so their
successors on row $e+4$ give no further case.
\item For the virtual edge $(3,1)$, the successors of $\cl40$ are $\cl5{\pm1}$ of class (E6)
and $\cl60$ of class (E1).
\end{itemize}

\emph{Diamond cells.} Let $p$ be in the diamond of $H$, so $\delta(p)\le r=w_n-3$ in the
notation of Proposition~\ref{prop:diamond}, and let $c$ be a successor of $p$ outside the
diamond. If $c$ lies on a row $\le\mu$, then $\delta(c)\le\delta(p)$, which is impossible, so
$s>\mu$. Then $\delta(c)\le\delta(p)+2\le r+2$, and since $\delta$ is even and $\delta(c)>r$, we
get $\delta(c)=r+2=w_n-1$, which is class (E5).

\emph{Other phase.} With our convention, a parent of the other phase occurs only for $c$ on
row $\mu+1$ with $\cy$ on row $\mu-1$. Outside the diamond, row $\mu+1$ contains only the
cells $|d|=w_n-2$, which belong to class (E5).

\emph{The diagonal.} Let $c=\cl s0$ be an environment cell with host $H=\B(n,e)$, so that $s$ is
not in the time span of a child. In a gap of $4$ rows between consecutive elements
$(e',\cdot)$ and $(e'+4,\cdot)$ of $\cE(H)$, the rows $e'+1$ and $e'+3$ are \ref{D2}-rows that
reach $d=0$. In the switching gap $(c_{\mathrm{gap}},e_{\mathrm{gap}})$, or in the gap $(e,e+10)$ for $n=2$, the even
rows are the \ref{D2}-rows $c_{\mathrm{gap}}+1$ and $e_{\mathrm{gap}}-1$, the diamond rows from $\mu-r$ to $\mu+r$, and
the two rows $c_{\mathrm{gap}}+3=\mu-w_n+1$ and $e_{\mathrm{gap}}-3=\mu+w_n-1$, whose diagonal cells belong to class
(E1). In a box of level $1$ the only such row is a \ref{D2}-row. For $H=\ast$ the gaps between
consecutive edges are gaps of $4$ rows, except the gap $(3,9)$, whose even rows $2,4,8$ are
\ref{D2}-rows and whose row $6$ gives $\cl60$ of class (E1).
\end{proof}

\begin{prop}\label{prop:env}
$D$ is correct at every environment cell.
\end{prop}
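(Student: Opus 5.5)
The plan is to split on whether $c$ has an irregular parent or lies on the diagonal. If $d\ne0$ and every parent is regular, then $D(\cx),D(\cy),D(\cz)$ coincide with $\Lam_\varepsilon$ at the parents, and Lemma~\ref{lem:lattices}(b) gives $D(c)=\Lam_\varepsilon(c)$ directly. Otherwise Lemma~\ref{lem:Eclasses} places $c$ in one of the classes (E1)--(E6). The proof then defines these classes explicitly and checks each one by a single evaluation of $f$ via Lemma~\ref{lem:rule}, in the same style as Propositions~\ref{prop:borders}--\ref{prop:diamond}. The class definitions are read off from the proof of Lemma~\ref{lem:Eclasses}:
\begin{itemize}
\item[(E1)] environment cells on the diagonal $d=0$;
\item[(E2)] cells at distance $1$ from a wall, i.e.\ $|d|=w\pm1$, on odd rows inside the time span of a box;
\item[(E3)] the cells $\cl{e+L+2}{\pm w}$ two rows after a closing corner, together with the cells before an edge in cases J3, J4 and J5;
\item[(E4)] the cells at the ends of transition rows in cases J1 and J2;
\item[(E5)] the rim $\delta(c)=w_n-1$ of a switching diamond after its midpoint;
\item[(E6)] the cells with $|d|=s-4$ next to the white lines.
\end{itemize}

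\textbf{(E1), the diagonal.} Here $\cx,\cz$ are mirror images, so the triple is $(u,v,u)$. On an even row $s$ the cell $c$ should be blue, since $\Lam_\varepsilon=2$ on even rows. The two odd-row parents $\cl{s\mp1}{\pm1}$ are both environment cells with equal value, and $\cy=\cl{s-2}0$ lies in a \ref{D2}-row, the diamond, or another (E1) row. I would tabulate the few row positions modulo $4$ relative to $\mu_H$, using the gap analysis at the end of Lemma~\ref{lem:Eclasses}. These positions are $c_{\mathrm{gap}}+3$, $e_{\mathrm{gap}}-3$, the rows $e'+3$ inside four-row gaps, and $\cl60$, $\cl{e+3}0$. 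In each case one checks that $f(u,v,u)=2$, using the values $f(1,1,1)=2$, $f(2,1,2)=2$, $f(0,\cdot,0)$ and so on.

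\textbf{(E2), beside the walls.} Outside a wall the parents are a wall cell (white), a cell of the same phase, and an even-row blue cell. The triple is $(0,2,z)$ with $z=\Lam_\varepsilon$ of a cell at distance $2$ from the wall, and $f(0,2,z)$ is $1$ if $z=0$ and $2$ otherwise. This has to match $\Lam_\varepsilon(c)$; the parity relation $e\equiv w\pmod4$ of Lemma~\ref{lem:timeline}(a), with the phase fixed by first or second half, should make it work. Inside a wall the argument is symmetric.

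\textbf{(E3), (E4), (E6).} These are handled similarly using \ref{P2}--\ref{P5} and the congruences in Table~\ref{tab:J}. For (E6) one uses $D\cl{s-1}{s-3}=0$ together with the lattice value at the other parent.

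\textbf{(E5) and the main obstacle.} (E5) is the mirror of the rim argument in Proposition~\ref{prop:diamond}. For $s>\mu$ and $\delta(c)=r+2$, one parent is a diamond cell with value $\Gam$ and the others are $\Lam_2$. Since $\Gam$ and $\Lam_2$ agree on odd rows at the relevant residues, the check reduces to $(2,2,1)\mapsto2$ or $(2,2,2)\mapsto1$ as before. I expect (E1) to be the main obstacle. The diagonal meets every kind of gap, including the switching gap, the virtual edge, and level-$1$ and level-$2$ boxes, and correctness there is exactly where the environment fails by Lemma~\ref{lem:lattices}(c). So each diagonal row type needs its own residue computation, and I would organise it by the distance of $s$ from the nearest edge of $\cE(H)$.
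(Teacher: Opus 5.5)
Your overall plan is the paper's. Regular off-diagonal cells are handled by Lemma~\ref{lem:lattices}(b), and all other cells go through the classes (E1)--(E6) of Lemma~\ref{lem:Eclasses}, each closed by one evaluation of $f$. The proposition consists of nothing but those evaluations, and the gap is that several of yours are wrong while the rest are only announced.

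\textbf{(E5).} This is the clearest failure. Take even $s$ with $s>\mu$ and $(s-\mu)+|d|=w_n-1$. The parent $\cy=\cl{s-2}{d}$ lies in the diamond. From $\mu\equiv w_n+1\pmod4$ you get $s+|d|\equiv2\pmod4$, hence $\Gam(\cy)=1$. The triple is therefore $(2,1,2)$, with $f=2=\Lam_2(c)$. Your ``$(2,2,2)\mapsto1$ as before'' would make an even-row environment cell red, which is false. The odd case is $(2,2,1)\mapsto2$ as you say. However, at $s=\mu+1$ the parent $\cy$ is the phase-$0$ cell $\cl{\mu-1}{w_n-2}$, not a diamond or $\Lam_2$ cell.

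\textbf{(E2).} You have the parity reversed. Since $w$ is odd, the cells with $|d|=w\pm1$ lie on even rows, so the class as you define it is empty. With the right parity no phase matching is needed. The target is $2$, $\cy$ is blue by \ref{P2}, one side parent is a wall cell, and $f(0,2,z)=2$ for every $z\ne0$.

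\textbf{(E1).} This is not the main obstacle, and your list of its cells is wrong. The rows $e'+3$ inside a four-row gap are \ref{D2}-rows, not environment rows. After Lemma~\ref{lem:Eclasses}, the only diagonal environment cells are:
\begin{itemize}
\item $\cl{\mu-w_n+1}0$, which is $\cl{c_{\mathrm{gap}}+3}0$ for $n\ge3$ and $\cl{e+3}0$ for $n=2$;
\item the (E5) cell $\cl{\mu+w_n-1}0$;
\item $\cl60$.
\end{itemize}
Their side parents are $\cl{s-1}{\pm1}$, both on row $s-1$. The first kind gives $(1,1,1)\mapsto2$: the side parents are red by \ref{P5} or \ref{P3}, and $\cy$ is a \ref{D2}-cell with $\Gam=1$. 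The other two give $(2,1,2)\mapsto2$.

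\textbf{(E6).} Both $\cx$ and $\cy$ lie on the white line. The triple is therefore $(0,0,z)$ and $f=2=\Lam_2(c)$ whatever $z$ is. The lattice value of the remaining parent plays no role.

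\textbf{(E3) and (E4).} These still need their evaluations written out.
\begin{itemize}
\item (E3): $(2,0,1)\mapsto1$ at $\cl{e+L+2}{w}$. The corner is white, $\cx$ is blue by \ref{P2}, and $\cz$ is a \ref{D2}-cell with $\Gam=1$.
\item (E4): $(1,1,1)\mapsto2$ at $\cl{e+3}{w-3}$ in case J1 and at $\cl{c_0+3}{a-1}$ in case J2, and $(2,1,2)\mapsto2$ at $\cl{e+4}{w-2}$ in case J1.
\end{itemize}
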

\begin{proof}
By Lemma~\ref{lem:Eclasses} it suffices to treat the classes (E1) to (E6). All other
environment cells are covered by Lemma~\ref{lem:lattices}(b).

\textbf{(E1)} \emph{Environment cells on the diagonal.} These are $\cl{\mu-w_n+1}0$ and
$\cl{\mu+w_n-1}0$ in each box of level $n\ge2$, and the cell $\cl60$. For $n\ge3$,
$\cl{\mu-w_n+1}0=\cl{c_{\mathrm{gap}}+3}0$, where $c_{\mathrm{gap}}$ is the closing time of the first copy of $S_1$. Its
parents are $\cl{c_{\mathrm{gap}}+2}{\pm1}$, red by \ref{P5}, and $\cl{c_{\mathrm{gap}}+1}0$, which has $\Gam=1$ since
$c_{\mathrm{gap}}\equiv3$. The triple is $(1,1,1)$ and $f=2=\Lam_0$. For $n=2$ the same triple arises,
using \ref{P3}. The cell $\cl{\mu+w_n-1}0$ belongs to class (E5). For $\cl60$ the parents
$\cl5{\pm1}$ have $\Lam_2=2$ and $\cl40$ is red, so the triple is $(2,1,2)$ and $f=2$.

\textbf{(E2)} \emph{Next to walls.} Inside: $c=\cl s{w-1}$ with $s$ even and $e+3\le s\le e+L-3$.
The parent $\cx$ is a wall cell, $\cy$ is blue by \ref{P2}, and $\cz$ is not white, so
$f(0,2,z)=2$ for $z\ne0$, which is $D(c)$. Outside: $c=\cl s{w+1}$ with $s$ even and
$e+1\le s\le e+L+1$. The parent $\cz$ is a wall cell or a corner, $\cy$ is blue by \ref{P2}, and
$\cx$ is not white, so $f(x,2,0)=2$ for $x\ne0$, which is $D(c)$ by \ref{P2}.

\textbf{(E3)} \emph{After a closing corner}: $c=\cl{e+L+2}{w}$, with $D(c)=1$ by \ref{P5}. The
parent $\cy$ is the corner and white, $\cx$ is blue by \ref{P2}, and $\cz=\cl{e+L+1}{w-1}$ has
$\Gam=1$ since $e+L+1\equiv w-1$. The triple is $(2,0,1)$ and $f=1$.

\textbf{(E4)} \emph{Behind transition rows.} In case J1, with the opening of $H$ at $e$ and
$a=w$, the cell $c=\cl{e+3}{w-3}$ has the parents $\cl{e+2}{w-2}$ and $\cl{e+2}{w-4}$, both red,
and $\cl{e+1}{w-3}$ with $\Gam=1$. The triple is $(1,1,1)$ and $f=2$. Next, $c=\cl{e+4}{w-2}$
has the parents $\cl{e+3}{w-1}$ and $\cl{e+3}{w-3}$, both blue, and $\cl{e+2}{w-2}$, red. The
triple is $(2,1,2)$ and $f=2=\Lam_0$, since $e+4\equiv w$. The cell $\cl{e+3}{w-1}$ belongs to
class (E2). In case J2, with the closing of a child $X$ at $c_0$ and half-width $a$, the cell
$c=\cl{c_0+3}{a-1}$ has the parents $\cl{c_0+2}a$, red by \ref{P5}, $\cl{c_0+2}{a-2}$, red, and
$\cl{c_0+1}{a-1}$ with $\Gam=1$. The triple is $(1,1,1)$ and $f=2$. In cases J3, J4 and J5
the rows behind the transition rows are transition rows or edges of the next edge, so there is
nothing to check. After the opening of a box of level $2$ and in the switching gap, the cells
in question belong to classes (E1) and (E2).

\textbf{(E5)} \emph{Below a diamond}: $c$ with $s>\mu$ and $(s-\mu)+|d|=w_n-1$, with $d\ge0$.
If $s$ is even, then $s\equiv d+2$, the parent $\cy$ is in the diamond with $\Gam=1$, the
parent $\cx$ has $\Lam_2=2$, and $\cz$ is a diamond cell on an odd row, or the mirror image of
$\cx$ when $d=0$, so blue in both cases. The triple is $(2,1,2)$ and $f=2=\Lam_2(c)$. If $s$ is
odd, then $s\equiv d$, the parent $\cz$ is in the diamond with $\Gam=1$, the parent $\cx$ is
blue, and $\cy$ is a diamond cell on an odd row if $s\ge\mu+3$, while for $s=\mu+1$ it is the
environment cell $\cl{\mu-1}{w_n-2}$ with $\Lam_0=2$. The triple is $(2,2,1)$ and
$f=2=\Lam_2(c)$.

\textbf{(E6)} \emph{Next to the white lines}: $c=\cl s{\pm(s-4)}$ with $s\ge5$, that is, $x=2$ or
$y=2$. The parents $\cx$ and $\cy$ are on the white line, so $f(0,0,z)=2=\Lam_2(c)$.
\end{proof}

\subsection{The white lines}
For $c=\cl s{s-2}$ with $s\ge4$, the parents $\cx$ and $\cy$ are on the red axis and $\cz$ is on
the white line, so the triple is $(1,1,0)$ and $f=0$. For $c=\cl31$, the parent $\cz=\cl20$ is
blue, so the triple is $(1,1,2)$ and $f=0$.

Propositions~\ref{prop:borders} to \ref{prop:env} and the white lines cover every cell with
$|d|\le s-2$, so $D$ is correct everywhere and Theorem~\ref{thm:main} follows from
Lemma~\ref{lem:unique}. \qed

\begin{rem}[why inner boxes are exact copies]\label{rem:copies}
The recurrence determines the interior of a box $\B(n,e)$ from its opening edge and
the neighbouring cells. By \ref{P3}, the preceding row $\cl{e-1}{d}$, $|d|\le w-1$,
is coloured by the central lattice $\Gam$. By \ref{P2}, the columns $|d|=w\pm1$ are
blue. These conditions do not depend on the position of the box, so every box of
level $k$ in $\cB$ is a translate of $S_k$. In particular, the environment inside a
box has the same phase at every depth of nesting.
\end{rem}

\section{The structure in coordinates}\label{sec:coords}

An \emph{ordinal} of a double sequence is a set of cells that forms a closed rectangle with
vertices $(u,v)$, $(u+k,v+k)$, $(v+k,u+k)$ and $(v,u)$, where $u<v$ and $k\ge1$, all of whose
border cells are white.

\begin{cor}[ordinals are boxes]\label{cor:boxes}
The ordinals of $t$ are the boxes of $\cB$. The ordinals contained in no other are the
main boxes $S_n$, $n\ge1$, and the ordinals directly inside a box, with no ordinal in between,
are its children.
\end{cor}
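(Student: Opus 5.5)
By Theorem~\ref{thm:main} we may replace $t$ by $D$ and work in the rotated coordinates $\cl sd$. In these coordinates a rectangle with vertices $(u,v)$, $(u+k,v+k)$, $(v+k,u+k)$, $(v,u)$ and $u<v$ is exactly a set
\[
\{\cl sd:\ e\le s\le e+2k,\ |d|\le a\},
\]
where $a=v-u$ and $e=u+v$. Its border consists of the two walls $|d|=a$ and the two edges $s=e$ and $s=e+2k$. The plan has two parts. First, every box of $\cB$ is an ordinal: by rule \ref{D1} its border is white. Second, and this is the main step, every ordinal is a box of $\cB$. After that, the statements about maximal ordinals and about children follow from the nesting properties in Lemma~\ref{lem:timeline}(c).

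For the converse, we use the characterisation of white cells in \ref{P1}. A cell is white exactly when it lies on a border of a box of $\cB$ or on one of the white lines $|d|=s-2$. Let $R$ be an ordinal with parameters $(e,a,k)$.

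\textbf{Excluding the white lines.} The white lines are excluded first. A wall of $R$ consists of the cells $\cl s a$ for $k+1\ge2$ consecutive values of $s$ of the same parity. A white line contains only one cell with a given $d$, so the wall cells cannot all lie on the white lines. Moreover, every cell of a box satisfies $|d|\le s-8$ by Lemma~\ref{lem:timeline}(d). Since a wall is a connected segment, if one wall cell lies on a box border then so does every wall cell, apart possibly from an endpoint, which must be checked separately. The same reasoning applies to the edge rows $s=e$ and $s=e+2k$. These rows are odd, because $e=u+v$ is odd when $a=v-u$ is odd; if $a$ were even, the edge rows would be even, and even rows contain no border cells by \ref{P1}, a contradiction. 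An edge row contains the $a+1\ge2$ cells $|d|\le a$ with $d\equiv s$, and these cannot all lie on the white lines, which meet each row in only two cells.

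\textbf{Matching the border.} The key claim is that a vertical white segment $\{\cl sa: s_0\le s\le s_1\}$ of length at least two, lying off the white lines, is contained in a wall of a single box. Walls of different boxes are disjoint and separated by at least $4$ rows by Lemma~\ref{lem:timeline}(c). An edge row of a box meets the line $d=a$ in at most one cell, and consecutive wall cells differ by $2$ in $s$. Together these show that the walls of $R$ lie inside the walls of a box $\B(n,e')$ with $w_n=a$. The edge row $s=e$ of $R$ is a white segment of length $2a+1$ in the $d$-direction. Horizontal white segments are pieces of edges. An edge $(e'',a'')$ with $a''\ne a$ either does not reach $|d|=a$ or passes through the interior of the wall $d=a$, where its cells are not corners. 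We therefore expect $e$ to be an edge time $e'$ or $e'+L_n$ of $\B(n,e')$, and similarly for $e+2k$. Since $k\ge1$, the two edge times are different, so $R=\B(n,e')$. The delicate point, which we expect to be the main obstacle, is ruling out two walls of the same half-width belonging to different boxes whose edge rows happen to combine. This cannot happen: the walls of $R$ are continuous white segments, and between two boxes of equal half-width there are at least $4$ rows at $|d|=a$ that are not white, because they are transition or environment cells (\ref{P2}, \ref{P4}, \ref{P5} give non-white values there). This is where the gap of $4$ rows from Lemma~\ref{lem:timeline}(b),(c) is used.

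\textbf{Maximal ordinals and children.} By Lemma~\ref{lem:timeline}(c), two boxes of $\cB$ are nested or disjoint, and children lie strictly inside their parent. A box is contained in no other exactly when it is not a child, that is, when it is a main box $S_n$. Every box inside $\B(n,e)$ is obtained from the children by iterating the children relation, so the boxes directly inside $\B(n,e)$ are its children.
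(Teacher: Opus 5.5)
Your outline matches the paper's proof: reduce white cells to box borders and the white lines via \ref{P1}, discard the white lines, place the walls of $R$ in the wall of a single box of half-width $a$ using \ref{P4} and \ref{P5}, then identify the edges. However, the central step is not justified.

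In ``Matching the border'' you infer that the walls of $R$ lie in the walls of one box $\B(n,e')$ with $w_n=a$. You derive this from two facts: an edge row meets the column $d=a$ in one cell, and boxes of equal half-width are $4$ rows apart. By \ref{P1}, though, the white cells of that column also include the inner cells $\cl{e''}{a}$ of all edges $(e'',a'')$ with $a''>a$. Your use of \ref{P4} and \ref{P5} shows that a white vertical segment containing a wall cell of a box of half-width $a$ stays inside that wall. But nothing in your argument excludes a wall of $R$ made only of such inner edge cells, taken from several edge rows. The missing fact is \ref{P3}: the cells two rows before and after an inner edge cell ($|d|\le a''-2$) are red. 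So such a cell cannot lie in a wall with at least two cells. This is how the paper concludes that the walls of $R$ consist of wall cells of boxes of half-width $a$.

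The identification of the edges of $R$ is only stated as an expectation, and ``horizontal white segments are pieces of edges'' is not proved. What you need is that no row of $\B$ strictly between its two edges is white on all of $|d|\le a$:
\begin{itemize}
\item for $a\ge3$, the cells with $|d|=a-2$ on those odd rows lie on no border, because the children have half-width at most $a-4$;
\item for $a=1$, there is no odd row strictly between the edges.
\end{itemize}
Since the walls of $R$ lie in the wall of $\B$, the two edge rows of $R$ lie in the time span of $\B$. So they are its edges, and $R=\B$.

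Two smaller points. First, your claim that an edge row of $R$ cannot lie entirely on the white lines fails for $a=1$: row $3$ on $|d|\le1$ consists of the white-line cells $t(1,2)$ and $t(2,1)$. Second, the endpoint case you postpone is immediate. The only white-line cell of the column $d=a$ is $\cl{a+2}{a}$. Its column neighbours are $\cl aa$, which is on an axis, and $\cl{a+4}{a}$, which is on no white line and, by Lemma~\ref{lem:timeline}(d), in no box; neither is white. This column argument, not the row count, is what excludes the white lines, including the row-$3$ case.

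Your derivation of the maximal ordinals and of the children from Lemma~\ref{lem:timeline}(c) is fine.
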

\begin{proof}
Let $R$ be an ordinal with half-width $W=v-u$ and time span from $s_0$ to $s_0+2k$. By
\ref{P1} the cells of its walls are border cells of boxes or cells of the white lines. A cell of
a white line has no white cell two rows before or after it in the same column, so for $k\ge1$
the wall cells are box cells. A wall of $R$ cannot contain an inner cell of an edge of a box
wider than $R$, because the cells two rows before and after such a cell are red by \ref{P3}. So
the walls of $R$ consist of wall cells of boxes of half-width $W$. Two such boxes are at least
$4$ rows apart, and the cells between them in the columns $|d|=W$ are not white by \ref{P4}
and \ref{P5}, so all wall cells of $R$ belong to one box $\B$. Inside $\B$ no row except the two
edges is white on all of $|d|\le W$, because children have half-width at most $W-4$, and for
$W=1$ the only odd rows of $\B$ are its edges. Hence $R=\B$.
\end{proof}

\begin{thm}[structure]\label{thm:structure}
The sequence $t$ is symmetric and consists of the following parts.
\begin{romenum}
\item The axes $x=0$ and $y=0$ are red. The lines $x=1$ and $y=1$ are white except for
$t(1,0)=t(0,1)=1$ and $t(1,1)=2$.
\item The main ordinal $S_n$, $n\ge1$, has vertices $A_n$, $B_n$, $\sigma(B_n)$, $\sigma(A_n)$
with $\sigma(u,v)=(v,u)$ and
\[
A_n=(2^{n+1}-2n+2,\ 2^{n+1}+1),\qquad B_n=(2^{n+2}-2n-1,\ 2^{n+2}-2).
\]
Its edges parallel to the diagonal have $2^{n+1}-2$ cells and its other edges have $2n$ cells.
The vertices of every ordinal have coordinates congruent to $(0,1)$, $(1,0)$, $(0,1)$, $(1,0)$
modulo $2$. $S_{n+1}$ follows $S_n$ with $B_n=A_{n+1}-(1,3)$.
\item Let an ordinal of level $n\ge3$ have vertex $A=(u,v)$. Its children are translates of
$S_k$, $1\le k\le n-2$, with vertices
\begin{align*}
&A+\big(2^n-2^{k+2}+4,\ \ 2^n-2^{k+2}-2(n-k)+4\big) &&\text{(first half)},\\
&A+\big(2^n+2^{k+1}+2(n-k)-4,\ \ 2^n+2^{k+1}-4\big) &&\text{(second half)}.
\end{align*}
\item An ordinal of level $n\ge2$ with vertex $(u,v)$ contains the \emph{switching square}
$[u+2^n,\,u+2^n+2n-4]^2$ of side $2n-3$, coloured by the central lattice: a cell of the square is
red if and only if both its coordinates are even. Each side of the square contains $n-1$ red
and $n-2$ blue cells. The two diagonal cells $(u+2^n-2,u+2^n-2)$ and $(u+2^n+2n-2,u+2^n+2n-2)$
are red. For $S_n$ the switching square is $[3\cdot2^n-2n+2,\ 3\cdot2^n-2]^2$.
\item Outside the main ordinals, cells $(x,y)$ with $\min(x,y)\ge2$ have the colour
$\Lam_2$, except in the transition squares $T_n$ of Proposition~\ref{prop:transition}
and at $(2,2)$ and $(4,4)$.
Inside an ordinal of level $n\ge3$, exclude its children, its transition squares,
its switching square and the two red diagonal cells of (iv). The remaining cells
have the colour $\Lam_0$ before the switching square and $\Lam_2$ after the square.
\item The white cells of $t$ are the borders of the ordinals and the two white lines.
\end{romenum}
\end{thm}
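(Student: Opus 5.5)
The proof will translate Theorem~\ref{thm:main} and the description $D$ of Definition~\ref{def:D} from rotated coordinates $\cl sd$ into the coordinates $(x,y)$. Corollary~\ref{cor:boxes} will identify ordinals with boxes of $\cB$. Symmetry was already noted after Lemma~\ref{lem:rule}. I will take the items in order.

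For (i), rule \ref{D0} gives red axes and white lines $|d|=s-2$, that is $x=1$ or $y=1$, with $s\ge3$. The exceptions are $t(1,0)=t(0,1)=1$ (axis) and $t(1,1)=\cl20$, which equals $2$ by \ref{P3}.

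For (ii), I will compute the vertex of $S_n$ from Definition~\ref{def:box}. With $e_n=2^{n+2}-2n+3$ and $w_n=2n-1$, the vertex is $A_n=((e_n-w_n)/2,(e_n+w_n)/2)=(2^{n+1}-2n+2,\,2^{n+1}+1)$. The vertex $B_n=A_n+(k,k)$ with $k=2^{n+1}-3$ gives the stated formula. The edge lengths were noted after Definition~\ref{def:box}. The parity claim follows from $e\equiv w_n\pmod4$ (Lemma~\ref{lem:timeline}(a)): $(e-w)/2$ is even, $(e+w)/2$ is odd, and $k$ is odd. The relation $B_n=A_{n+1}-(1,3)$ is a direct check.

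For (iii), I will convert $p_{n,k}$ and $q_{n,k}$ of Definition~\ref{def:children}. A child $\B(k,e+p)$ has vertex $A+\big((p-w_k+w_n)/2,\,(p+w_k-w_n)/2\big)$. Substituting $w_n-w_k=2(n-k)$ gives the two formulas. The translate property comes from Remark~\ref{rem:copies}, or more directly from the fact that $D$ inside a box depends only on relative position. Relative position determines $\cE$, the diamond, and the phases, and the lattices $\Gam,\Lam_\varepsilon$ are invariant under translations preserving $s\bmod4$. By Lemma~\ref{lem:timeline}(a), all boxes of level $k$ have $e\equiv w_k\pmod 4$.

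For (iv), the diamond $|s-\mu|+|d|\le w_n-3$ corresponds in $(x,y)$ to a square of side $w_n-2=2n-3$ centred at $(\mu/2,\mu/2)$. Here $\mu=e+2^{n+1}-3$, so $\mu/2=u+n-1+2^n-1$ with $u=(e-w_n)/2$, which gives the interval $[u+2^n,u+2^n+2n-4]$. Colouring by $\Gam$ means red exactly when both coordinates are even. The side counts $n-1$ and $n-2$ follow from $u$ even. The two red diagonal cells are the (E1) cells $\cl{\mu\mp(w_n-1)}0$ and the \ref{P3} values adjacent to them; I will check their values against $D$. Specialising to $u=2^{n+1}-2n+2$ gives the square for $S_n$.

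For (v) and (vi), I will read off rules \ref{D2}--\ref{D5} together with \ref{P1}. The transition squares are the \ref{D2}/\ref{D3} cells between $S_n$ and $S_{n+1}$, and $(2,2),(4,4)$ are $\cl40$ and $\cl80$. The main obstacle will be bookkeeping in (v): deciding exactly which transition cells differ from $\Lam_\varepsilon$ and matching them with Proposition~\ref{prop:transition}. I would handle this by comparing $\Gam$ and the red value $1$ with $\Lam_\varepsilon$ row by row, using the congruences $e\equiv w$.
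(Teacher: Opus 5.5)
Parts (i)--(iii), (vi) and the geometry of the switching square in (iv) follow the paper: you translate Theorem~\ref{thm:main}, Corollary~\ref{cor:boxes} and Lemma~\ref{lem:timeline}(a) into $(x,y)$. Your formulas for $A_n$, $B_n$, the child vertices and $[u+2^n,u+2^n+2n-4]^2$ are correct.

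\textbf{The gap is in (v), which you defer as bookkeeping.} Part (v) needs every \ref{D2}- or \ref{D3}-cell to lie in the transition square of some junction, apart from the named exceptions. For host $\ast$ this is a short check on the rows $c_0+1,c_0+2,c_0+3$ between $S_n$ and $S_{n+1}$, plus $\cl20,\cl40,\cl80$. The second sentence of (v), however, concerns transition rows of edges nested arbitrarily deep inside a box. Your plan of comparing $\Gam$ and the value $1$ with $\Lam_\varepsilon$ row by row does not say where these rows lie. On even rows $\Lam_\varepsilon$ is blue while $\Gam$ is red at $d\equiv s\pmod4$. On odd rows $\Lam_\varepsilon$ alternates while a \ref{D3}-row is entirely red. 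So, apart from the narrowest rows, every transition row disagrees with the environment and must be shown to lie in a square. Proposition~\ref{prop:transition} does not supply this, since it only describes the contents of $T_n$. Also, $T_n$ is not the set of \ref{D2}/\ref{D3}-cells between $S_n$ and $S_{n+1}$; it extends into both main boxes.

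The missing idea is the chain argument of Lemma~\ref{lem:transinsquares}:
\begin{itemize}
\item For a junction $(c_0,a)$, $(c_0+4,b)$, follow last children from the box closing at $c_0$, and first children from the box opening at $c_0+4$.
\item By Lemma~\ref{lem:timeline}(b), the $j$th boxes close at $c_0-4j$ with half-width $a-4j$, and open at $c_0+4+4j$ with half-width $b-4j$.
\item Evaluating $|s-s_J|+|d|$ at the outermost cells of their transition rows gives at most $R_J$.
\end{itemize}
Every edge lies in such a chain except the closing of the first-half copy of $S_1$ before a switching gap, the opening of the second-half copy after it, the opening of the main $S_1$, and the virtual edge. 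These produce exactly the excepted cells.

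\textbf{There is also a concrete error in (iv).} The cells $(u+2^n-2,u+2^n-2)$ and $(u+2^n+2n-2,u+2^n+2n-2)$ are $\cl{\mu\mp2n}0$. They are not the (E1) cells $\cl{\mu\mp(w_n-1)}0=\cl{\mu\mp(2n-2)}0$. Those are $(u+2^n-1,u+2^n-1)$ and $(u+2^n+2n-3,u+2^n+2n-3)$, with odd coordinates, and $D=2$ there, so checking them against $D$ would refute your claim. The red cells are the \ref{D2}-cells $\cl{c_{\mathrm{gap}}+1}0$ and $\cl{e_{\mathrm{gap}}-1}0$ beside the two copies of $S_1$ bounding the switching gap. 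For $n=2$ they are $\cl{e+1}0$ and $\cl{e+9}0$, beside the box's own edges. They are red because $\mu\equiv w_n+1=2n\pmod4$ gives $\mu\pm2n\equiv0\pmod4$. These are exactly the two exceptional cells of Lemma~\ref{lem:transinsquares} inside the box, which is why (v) excludes them separately.
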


\begin{proof}
All parts restate Theorem~\ref{thm:main} in the coordinates $(x,y)$. For (ii),
$A_n=\big(\tfrac{e_n-w_n}2,\tfrac{e_n+w_n}2\big)$ and $B_n=A_n+(2^{n+1}-3)(1,1)$, and the
parities follow from Lemma~\ref{lem:timeline}(a), since $u=(e-w)/2$ is even, $v=u+w$ is odd and
$k=L/2$ is odd. For (iii), the vertex of a child opening at $e+p$ with half-width $w_k$ is
$A+\big(\tfrac p2+(n-k),\ \tfrac p2-(n-k)\big)$, and one inserts $p=p_{n,k}$ or $q_{n,k}$. For
(iv), the diamond of rule \ref{D4} has centre $\mu/2=u+2^n+n-2$ on the diagonal and half-side
$r/2=n-2$, and $u$ is even. The two red diagonal cells are the \ref{D2}-cells $\cl{\mu-2n}0$
and $\cl{\mu+2n}0$, which are red because $\mu\pm2n\equiv0\pmod4$. For (v), the cells in
question are given by rules \ref{D2}, \ref{D3} and \ref{D5}, and Lemma~\ref{lem:transinsquares}
below places all \ref{D2}- and \ref{D3}-cells in transition squares, apart from the cells
named. Part (vi) is \ref{P1}.
\end{proof}

\begin{cor}[the mirrored ordinals]\label{cor:ordinals}
For each $n\ge1$, the boxes contained in $S_n$, including $S_n$ itself, form an ordered
rooted tree under the child relation. This tree is isomorphic to the occurrence tree of
$s_n$: the root corresponds to the whole word and each child to its particular occurrence
in the defining list. The family $\cB$ is the disjoint union of these trees over the main
boxes. In a box of level at least $3$, the switching square corresponds to the bar of
its composite word.
\end{cor}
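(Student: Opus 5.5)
The plan is to reduce the corollary to the combinatorics of Definition~\ref{def:children} together with Corollary~\ref{cor:boxes}, by induction on the level $n$. First, the tree structure. By Corollary~\ref{cor:boxes} the ordinals are exactly the boxes of $\cB$, and by Lemma~\ref{lem:timeline}(c) two boxes of $\cB$ are nested or disjoint. Hence every non-main box has a unique parent, namely the smallest box strictly containing it. By the same corollary, this parent is the box of which it is a child in Definition~\ref{def:children}. So the boxes inside $S_n$ form a rooted tree with root $S_n$. The main boxes are pairwise disjoint, since $S_n$ closes at $e_{n+1}-4$ by Lemma~\ref{lem:timeline}(b), and every box of $\cB$ descends from exactly one main box. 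This gives the disjoint union. The order on the children is the time order of their opening edges, which is well defined because distinct children are separated by at least $4$ rows.

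Next, the isomorphism, by strong induction on $n$. The key observation is that the subtree below a box $\B(n,e)$ depends only on $n$: its children are $\B(k,e+p_{n,k})$ and $\B(k,e+q_{n,k})$, and the offsets depend only on $n$ and $k$, so the whole subtree is a translate of the subtree of $S_n$ (compare Remark~\ref{rem:copies}). The levels match the indices of the two-initial-symbol words once we identify level $1$ and level $2$ boxes with the atoms $s_1=\emptyset_1$ and $s_2=\emptyset_2$. Both are leaves, which matches the fact that level $1$ and $2$ boxes have no children.

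For $n\ge3$, the word is $s_n=(s_{n-2}\cdots s_1\mid s_1\cdots s_{n-2})$. By Definition~\ref{def:children} the children of a level-$n$ box in time order have levels $n-2,\dots,1,1,\dots,n-2$. Matching the $i$-th child with the $i$-th occurrence in the list and applying the induction hypothesis to each child gives an order-preserving isomorphism of rooted ordered trees. One point needs care: an atom $s_1$ or $s_2$ is a leaf, but a box of level $\ge3$ is not, so the levels must genuinely correspond to indices. This holds because $p=2$ and the indices run from $1$ to $n-2$ exactly as the levels do.

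Finally, the bar. By Lemma~\ref{lem:timeline}(b),(c) the first half lies in $s\le\mu-2n-1$ and the second half in $s\ge\mu+2n+1$. The switching diamond of rule~\ref{D4} has radius $w_n-3=2n-4$ around $\cl{\mu}0$, so it lies strictly inside the switching gap. It therefore sits exactly between the last occurrence of $s_1$ in the first half and the first occurrence of $s_1$ in the second half, that is, at the position of the bar. By Theorem~\ref{thm:structure}(iv) this diamond is the switching square.

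I expect the main obstacle to be making precise that the child relation of Definition~\ref{def:children} coincides with the ordinal containment relation. That step rests on Corollary~\ref{cor:boxes} and on nestedness, and everything after it is bookkeeping with the formulas for $p_{n,k}$.
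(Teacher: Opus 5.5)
Your proposal is correct and follows essentially the paper's proof. In both, the children of a level-$n$ box have levels $n-2,\dots,1,1,\dots,n-2$, levels $1$ and $2$ are leaves (matching $s_n$ with atoms $s_1,s_2$), and Lemma~\ref{lem:timeline}(c) places the switching square between the two copies of $S_1$; you merely add detail the paper leaves implicit, namely uniqueness of parents via nestedness and Corollary~\ref{cor:boxes}, and the check that the diamond of radius $2n-4$ fits inside the switching gap.
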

\begin{proof}
By Definition~\ref{def:children} the children of a box of level $n\ge3$ are, in time order,
boxes of levels $n-2,\dots,1,1,\dots,n-2$, and boxes of level $1$ and $2$ have no children,
which is the definition of $s_n$ with $s_1=\emptyset_1$ and $s_2=\emptyset_2$. By
Lemma~\ref{lem:timeline}(c), for levels at least $3$, the switching square lies between the
two copies of $S_1$.
\end{proof}

\begin{cor}[number of boxes]\label{cor:two-count}
Let $N_n$ be the number of boxes contained in $S_n$, including $S_n$ itself and all its
descendants. Then
\[
N_n=\frac{2^n-(-1)^n}{3}\qquad(n\ge1).
\]
\end{cor}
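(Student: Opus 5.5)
The plan is to derive a recurrence for $N_n$ from the child structure in Definition~\ref{def:children} and then solve it. By Corollary~\ref{cor:boxes} and Remark~\ref{rem:copies}, every box of level $k$ in $\cB$ is a translate of $S_k$. Its subtree of descendants is therefore isomorphic to that of $S_k$, and contains exactly $N_k$ boxes counting itself. Boxes of levels $1$ and $2$ have no children, so $N_1=N_2=1$. For $n\ge3$ the children of $S_n$ are two boxes of each level $k$ with $1\le k\le n-2$. By Lemma~\ref{lem:timeline}(c), boxes of $\cB$ are nested or disjoint, so the subtrees below distinct children are disjoint. This gives
\[
N_n=1+2\sum_{k=1}^{n-2}N_k\qquad(n\ge3).
\]

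Next I turn this into a two-term recurrence. For $n\ge4$, subtracting the identity for $n-1$ gives $N_n-N_{n-1}=2N_{n-2}$, that is, $N_n=N_{n-1}+2N_{n-2}$. I check $n=3$ directly: $N_3=1+2N_1=3=N_2+2N_1$. So the recurrence $N_n=N_{n-1}+2N_{n-2}$ holds for all $n\ge3$, with initial values $N_1=N_2=1$.

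Finally I check the closed form. The characteristic roots of the recurrence are $2$ and $-1$, and $(2^n-(-1)^n)/3$ satisfies it. At $n=1$ and $n=2$ it gives $(2+1)/3=1$ and $(4-1)/3=1$. Induction on $n$ then gives the formula, which is the Jacobsthal numbers.

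The only point needing care is the claim that each child contributes exactly $N_k$ boxes. This rests on the child relation being defined purely by level, as in Definition~\ref{def:children}, so it is immediate from the recursive definition of $\cB$. No geometry is needed beyond the disjointness in Lemma~\ref{lem:timeline}(c).
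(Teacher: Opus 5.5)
Your proof is correct and follows the paper's argument: the child list gives $N_1=N_2=1$ and $N_n=1+2\sum_{k=1}^{n-2}N_k$, subtraction gives $N_n=N_{n-1}+2N_{n-2}$, and the closed form matches the recurrence and the initial values. The appeal to translates via Corollary~\ref{cor:boxes} and Remark~\ref{rem:copies} is not needed, because, as you note at the end, the count follows from Definition~\ref{def:children} alone.
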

\begin{proof}
The children give $N_1=N_2=1$ and $N_n=1+2\sum_{k=1}^{n-2}N_k$ for $n\ge3$.
Subtracting consecutive equations gives $N_n=N_{n-1}+2N_{n-2}$ for $n\ge3$.
The displayed expression has the same initial values and satisfies this recurrence.
\end{proof}

\subsection{Transition squares}

Consecutive edges of a host that are $4$ rows apart produce a square of cells with a simple
structure. A \emph{junction} is a pair of consecutive elements $(c_0,a)$, a closing, and
$(c_0+4,b)$, an opening, of some $\cE(H)$, that is, a case J2, J3 or J5. Its \emph{transition
square} is the set
\[
\{\cl sd:\ |s-s_J|+|d|\le R_J\},\qquad (s_J,R_J)=\begin{cases}(c_0+1,\ a+1)&\text{in case J2},\\
(c_0+3,\ a+3)&\text{in cases J3 and J5},\end{cases}
\]
which is a square with sides parallel to the axes in the coordinates $(x,y)$. For the junction
between $S_n$ and $S_{n+1}$ this is the square
\[
T_n=[\,2^{n+2}-2n-1,\ 2^{n+2}+1\,]^2
\]
of side $2n+3$, whose top left corner $(u_B,u_B)$ has the first coordinate of $B_n=(u_B,v_B)$.

\begin{lem}\label{lem:transinsquares}
Every transition cell lies in the transition square of some junction, with the following
exceptions: the diagonal cells $\cl{c_{\mathrm{gap}}+1}0$ and $\cl{e_{\mathrm{gap}}-1}0$ on the two sides of each
switching gap $(c_{\mathrm{gap}},e_{\mathrm{gap}})$, and the three cells $\cl20$, $\cl40$ and $\cl80$, which are
$(1,1)$, $(2,2)$ and $(4,4)$.
\end{lem}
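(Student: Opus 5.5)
Every transition cell is a \ref{D2}- or \ref{D3}-cell attached to an edge $(e,a)\in\cE(H)$ of its host $H$. The plan is to classify these cells by the position of the responsible edge in the time-ordered list $\cE(H)$. Then, for each kind of edge, we check that the cells on rows $e\pm1$ with $|d|\le a-1$ and on rows $e\pm2$ with $|d|\le a-2$ satisfy $|s-s_J|+|d|\le R_J$ for a suitable junction $J$. Everything reduces to the consecutive-edge catalogue of Table~\ref{tab:J}, the exceptional gaps listed before it, and Lemma~\ref{lem:timeline}(b). The key observation is that in every case J1--J5 the edges come in chains whose neighbours are at distance $4$. Moreover, each opening of a child except the first is preceded by a closing, and each closing except that of the last child (and of the first copy of $S_1$) is followed by an opening.

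\emph{Cells around a junction edge.} Let $J=((c_0,a),(c_0+4,b))$ be a junction. Take first case J2, with $b=a-2$, $s_J=c_0+1$ and $R_J=a+1$. The cells after the closing lie on rows $c_0+1$ and $c_0+2$ with $|d|\le a-1$ and $|d|\le a-2$, so $|s-s_J|+|d|\le a$. The cells before the opening lie on rows $c_0+3$ and $c_0+2$ with $|d|\le b-1=a-3$ and $|d|\le a-4$, giving at most $a-1$. Now take cases J3 and J5, with $b=a+2$, $s_J=c_0+3$ and $R_J=a+3$. The cells after the closing give $|s-s_J|+|d|\le 2+(a-1)$ and $1+(a-2)$. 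The cells before the opening give $0+(a+1)$ and $1+a$. Both bounds are at most $R_J$. In each case the point is just that the diamond inequality is linear, so one checks the two extreme rows.

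\emph{Cells around non-junction edges.} These are the edges in J1 and J4 and the boundary of the switching gap. In J1 (opening of $H$ followed by its first child), the cells after the opening of $H$ belong to $\cE(H')$ for the host outside $H$ only on rows $e-1$ and $e-2$. That edge $(e,w)$ is itself the second element of a junction of $\cE(H')$ (J2, J3 or J5), or it is the opening of $S_1$ at $e=9$, or it follows a switching gap. The rows $e+1$ and $e+2$ have host $H$ and precede the first child. We then treat $(e,w)$ and $(e+4,w-4)$ as forming a configuration of the J2 shape. We must show that these cells fall in the square of a junction, which is the delicate point. The edge list of $H$ begins with the opening of $H$ and the first child; the junction responsible will have to be the one immediately preceding $H$ in $\cE(H')$. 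Symmetrically, J4 is matched with the junction following the closing of $H$. We check the inequalities as above, using $w_H$ versus $w_{H'}$ and the distances of Lemma~\ref{lem:timeline}(b).

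\emph{Exceptions.} Next to a switching gap, the closing of the first $S_1$ at $c_{\mathrm{gap}}$ has half-width $1$. It contributes only the \ref{D2}-cells $\cl{c_{\mathrm{gap}}\pm1}0$. The cell $\cl{c_{\mathrm{gap}}-1}0$ is covered by the junction J2 preceding it, and $\cl{c_{\mathrm{gap}}+1}0$ is an exception; symmetrically $\cl{e_{\mathrm{gap}}-1}0$ is an exception. The virtual edge $(3,1)$ and the opening of $S_1$ at $9$ give $\cl20,\cl40,\cl80$, while $\cl{10}0$ lies in $T_1$. The main obstacle is the bookkeeping for J1 and J4 and for levels $1$ and $2$, where $L_n$ is small (gaps of $2$ and $10$ rows). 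I expect to handle these by direct inspection, listing the few boxes of levels $1$ and $2$ explicitly.
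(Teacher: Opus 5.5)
Your count for the cells lying between the two edges of a junction is correct, and handing the remaining cells to the consecutive pairs J1 and J4 is a reasonable decomposition. However, the J1/J4 step, which carries the content of the lemma, does not go through as you state it.

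You claim that, in a J1 pair, the opening of $H$ is preceded in $\cE(H')$ by one of three things: a junction, the opening of $S_1$ at $9$, or a switching gap. You then take the responsible junction to be the one immediately preceding $H$ in $\cE(H')$. The last two alternatives cannot occur, since a box with a first child has level at least $3$. Meanwhile the case that actually needs an argument is missing: $H$ may itself be the first child of $H'$, which may be a first child in turn. For example, take $n\ge4$. The first child $Y_1$ of the main box $S_{n+1}$ is a copy of $S_{n-1}$ and has a first child of its own. The element preceding the opening of $Y_1$ in $\cE(S_{n+1})$ is the opening of $S_{n+1}$, which is not a junction. Put $c_0=e_n+L_n$. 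The transition cell $\cl{c_0+9}{w_n-3}$ after the opening of $Y_1$ is covered by the square $T_n$ of the J5 junction two levels up. It lies on the boundary of that square: $|s-s_J|+|d|=6+(w_n-3)=w_n+3=R_J$. The same happens for J4 with chains of last children. It also happens for the boxes of levels $1$ and $2$ at the bottom of such chains; these occur infinitely often, so they cannot be handled by listing them.

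The missing idea is the pair of chains attached to each junction $((c_0,a),(c_0+4,b))$.
\begin{itemize}
\item Follow last children downward from the box closing at $c_0$. This gives boxes $X_j$ closing at $c_0-4j$ with half-width $a-4j$.
\item Follow first children downward from the box opening at $c_0+4$. This gives boxes $Y_j$ opening at $c_0+4+4j$ with half-width $b-4j$.
\end{itemize}
Conversely, start from any box and go up through \emph{last child of} (resp.\ \emph{first child of}) until you reach a box that is not a last (resp.\ first) child. This shows that every closing (resp.\ opening) edge lies in such a chain, with a few exceptions: the closing of the first-half copy of $S_1$ before a switching gap, the opening of the second copy, the opening of the main box $S_1$, and the virtual edge. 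These exceptions produce exactly the listed cells.

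The inequality must then be verified at every depth $j$, and it is sharp. The shift by $4j$ in time cancels exactly the loss of $4j$ in half-width, so the outermost transition cells on the outer side of every chain edge reach $R_J$ exactly. For instance, in cases J3 and J5 on row $c_0+5+4j$ one gets $(4j+2)+(a+1-4j)=a+3$. A one-level comparison of $w_H$ with $w_{H'}$ has no slack to replace this.

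A smaller point concerns $\cl{c_{\mathrm{gap}}-1}0$. It comes from a J2 junction only when $n\ge4$. When $n=3$ the first copy of $S_1$ is a first child, and the cell is covered through an opening chain instead.
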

\begin{proof}
For a junction $J$, let $X_0$ be the box closing at $c_0$ and let $Y_0$ be the box
opening at $c_0+4$. The \emph{closing chain} $X_0\supset X_1\supset\cdots$ follows
last children, so $X_{j+1}$ is the last child of $X_j$ as long as $X_j$ has level at
least $3$. The \emph{opening chain} $Y_0\supset Y_1\supset\cdots$ follows first
children in the same way. By Lemma~\ref{lem:timeline}(b), $X_j$ closes
at $c_0-4j$ with half-width $a-4j$, and $Y_j$ opens at $c_0+4+4j$ with half-width $b-4j$.

Every closing edge lies in a closing chain, with one exception. Given a box $X$, go up
through the relation \emph{last child of} until a box $Z$ is reached that is not a last child.
Then $Z$ is followed by a sibling, which is case J2 or J3, or $Z$ is a main box, which is case
J5, or $Z$ is the first-half copy of $S_1$ before a switching gap, in which case $Z=X$ because
$S_1$ has no children. Symmetrically, every opening edge lies in an opening chain, except the
opening of the copy of $S_1$ that follows a switching gap and the opening of the main box $S_1$.

The transition rows of the chain edges lie in the square. In case J2, with $s_J=c_0+1$ and
$R_J=a+1$, the value of $|s-s_J|+|d|$ at the outermost cells of the transition rows is as
follows. For the closing of $X_j$: $a+1$ on the row $c_0-4j-1$ with $|d|\le a-4j-1$, $a+1$ on
the row $c_0-4j-2$ with $|d|\le a-4j-2$, and at most $a-1$ on the rows $c_0-4j+1$ and
$c_0-4j+2$. For the opening of $Y_j$: $a-1$ and $a-3$ on the rows $c_0+3+4j$ and $c_0+2+4j$,
and $a+1$ on the rows $c_0+5+4j$ with $|d|\le a-3-4j$ and $c_0+6+4j$ with $|d|\le a-4-4j$. In
cases J3 and J5, with $s_J=c_0+3$, $R_J=a+3$ and $b=a+2$, the same computation gives $a+3$ and
$a+3$ before the closing of $X_j$, $a+1$ and at most $a$ after it, $a+1$ and at most $a+1$
before the opening of $Y_j$, and $a+3$ and $a+3$ after it. All these values are at most $R_J$.

The edges outside these chains account for the exceptions in the statement.
Before a switching gap, the closing of the first-half copy of $S_1$ contributes
$\cl{c_{\mathrm{gap}}+1}0$. After the gap, the opening of the second copy contributes $\cl{e_{\mathrm{gap}}-1}0$.
The opening of the main box $S_1$ contributes $\cl80$, and the virtual edge
contributes $\cl20$ and $\cl40$. The interior
cell of each such copy of $S_1$ is also a transition cell of the other edge of the copy, which
lies in a chain.
\end{proof}

\begin{prop}[transition squares]\label{prop:transition}
Let $n\ge1$.
\begin{alphenum}
\item In $T_n$, every odd anti-diagonal is entirely white or entirely red, and white and red
alternate. The white anti-diagonals are the closing edges of $S_n\supset S_{n-2}\supset\cdots$
and the opening edges of $S_{n+1}\supset S_{n-1}\supset\cdots$, each as wide as $T_n$ at its
row. Every even anti-diagonal of $T_n$ is coloured by the central lattice.
\item Consequently, the four corner cells of $T_n$ are blue. Along the two sides that meet at
the top left corner, read from that corner, the colours repeat with period $4$ as blue, red,
blue, white for $n$ even and as blue, white, blue, red for $n$ odd. Along the two other sides,
read towards the bottom right corner, the colours repeat as blue, white, blue, red. The white
cells lie on the diagonal segments joining corresponding cells of the two initial sides, and
of the two final sides. The red cells form staircases joining corresponding red cells of the
sides, with runs of $2,3,3,\dots,3,2$ cells. All other cells are blue.
\end{alphenum}
\end{prop}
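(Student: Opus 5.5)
The plan is to read everything off Theorem~\ref{thm:main} and the description $D$, working in rotated coordinates. The junction between $S_n$ and $S_{n+1}$ is of type J5, with $c_0=e_n+L_n$, $a=w_n$, $b=w_{n+1}=a+2$. So $T_n$ is the diamond $|s-s_J|+|d|\le R_J$ with $s_J=c_0+3$ and $R_J=w_n+3=2n+2$.

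First I would list the elements of $\cE$ met inside $T_n$. By Lemma~\ref{lem:timeline}(b), the closing chain $S_n\supset X_1\supset\cdots$ closes at $c_0-4j$ with half-width $w_n-4j$. The opening chain $S_{n+1}\supset Y_1\supset\cdots$ opens at $c_0+4+4j$ with half-width $w_n+2-4j$. At row $s$ the diamond has half-width $R_J-|s-s_J|$. For the closing of $X_j$ this is $2n+2-(4j+3)=w_n-4j$. For the opening of $Y_j$ it is $2n+2-(4j+1)=w_{n+1}-4j$. So each of these edges is exactly as wide as $T_n$ at its row, and all its cells are white by \ref{P1}; the chains end where the half-width drops below $1$.

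The remaining odd rows of $T_n$ are $c_0-4j\pm2$ and $c_0+4+4j\pm2$. By \ref{P3}, the \ref{D3}-cells of the adjacent chain edges make these rows red, and I would check that their width $a-2$ matches the diamond width there. The two odd rows adjacent to the middle junction need separate treatment: row $c_0+2$, with $|d|\le w_n$, and the rows just outside the last chain edges. For these I would use \ref{P5}, the J-case analysis, and the cells $\cl{e-2}{\pm w}$ of \ref{P4}. This gives the white/red alternation of (a). For the even rows, the \ref{D2}-cells on both sides of each chain edge carry $\Gam$ on width $a-1$. The even rows that reach the full diamond width are covered by the computation in Lemma~\ref{lem:transinsquares}. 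At the tips, \ref{P2} supplies blue cells, which agree with $\Gam$ because of the parity congruences of Lemma~\ref{lem:timeline}(a).

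The main obstacle is showing that \emph{every} cell of $T_n$ is a border or transition cell of a chain edge, with no interference from environment or diamond cells. I would handle this with an exact accounting, row by row, of the maximal $|d|$ covered by rules \ref{D1}--\ref{D3}, compared against $R_J-|s-s_J|$. This would reuse the values already computed in the proof of Lemma~\ref{lem:transinsquares}, which showed those transition rows lie inside the square, and I would now check equality at the boundary. The small cases $n=1,2$, where the chains are short, I would check directly.

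Part (b) then follows by translating (a) to $(x,y)$ coordinates. The corners of $T_n$ are the tips $d=\pm R_J$, which lie on even rows and are blue because $\Gam$ is red only when $s\equiv d\pmod 4$. Along a side, the cells run through even rows (blue, since they are off the $\Gam$-red residue at the extreme $|d|$) and odd rows (white or red, alternating). The phase of the period-$4$ pattern is fixed by the parity of the first odd row, namely whether $c_0-4j$ at the top left lands on an edge. Using $c_0\equiv w_n+2\pmod 4$ and $w_n=2n-1$, this gives the two patterns according to the parity of $n$. In $(x,y)$ coordinates the white and red anti-diagonals become diagonal segments and staircases respectively. The run lengths $2,3,\dots,3,2$ come from counting how many $\Gam$-red cells on the adjacent even rows join each red odd row into a connected staircase.
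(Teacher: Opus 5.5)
Your route is the paper's: match the closing chain of $S_n$ and the opening chain of $S_{n+1}$ to the widths of $T_n$, get the red odd rows from \ref{D3} of the neighbouring edges and the even rows from \ref{D2}, then read off (b) in $(x,y)$. The gap is in the step you single out as the main obstacle, which is false as stated. $T_n$ is \emph{not} exhausted by border and transition cells, so the boundary equality you plan to check fails on many rows.
On row $c_0+2$, \ref{D3} from the opening of $S_{n+1}$ reaches only $|d|\le w_n$, while $T_n$ reaches $w_n+2$. The cells $\cl{c_0+2}{\pm(w_n+2)}$ are environment cells, red because $\Lam_2=1$ there; this is \ref{P4}. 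On each even row $c_0-4j+1$, \ref{D2} reaches $|d|\le w_n-4j-1$, but $T_n$ reaches $w_n-4j+1$. On each row $c_0+3+4j$, \ref{D2} reaches $w_n-4j+1$ against $w_n-4j+3$. The end cells of these rows, including the side corners $\cl{c_0+3}{\pm(w_n+3)}$, are environment cells. They are blue by \ref{P2}, and $\Gam=2$ there because $s-|d|\equiv c_0-w_n\equiv2\pmod 4$. So the row-by-row accounting must show that the leftover cells carry the values claimed in (a), not that there are none. This is how the paper argues.

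One further cell is not reached by any of the facts you cite. $T_n$ has four corners, not only $\cl{s_J}{\pm R_J}$. The other two are the diagonal tips $\cl{s_J\pm R_J}0$, i.e.\ $(u_B,u_B)$ and $(u_B+2n+2,u_B+2n+2)$, which lie in the interiors of the innermost boxes of the two chains. For every $n$, one chain ends in a box of level $2$: the closing chain if $n$ is even, the opening chain if $n$ is odd. With $\mu$ the midpoint of that box, the tip there is $\cl{\mu+2}0$ or $\cl{\mu-2}0$ respectively. This is an environment cell of class (E1), not a transition cell and not covered by \ref{P2}, and it is blue by (E1). The other tip is the interior \ref{D2}-cell of a box of level $1$, blue by \ref{P3}. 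Because this situation occurs for all $n$, checking $n=1,2$ directly does not cover it. It is needed both for the even-row statement of (a) and for the four blue corners in (b). With these two repairs your outline coincides with the paper's proof, including the phase argument from $c_0\equiv w_n+2\pmod 4$ and the staircases built from the red cells of $\Gam$.
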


\begin{proof}
(a) Let $c_0=e_n+L_n$. The rows $c_0-4j$ with $j\ge0$ are the closing edges of the chain of last
children, of half-width $w_n-4j$, which is the width of $T_n$ at that row. The rows $c_0+4+4j$
are the opening edges of the chain of first children of $S_{n+1}$, of half-width $w_n+2-4j$,
again the width of $T_n$. The rows $c_0+2-4j$ and $c_0+6+4j$ are red on their full width in $T_n$
by rule \ref{D3} of the two neighbouring edges, except at the two outermost cells
$\cl{c_0+2}{\pm(w_n+2)}$, which are environment cells with $\Lam_2=1$. On even rows, $D=\Gam$ by
rule \ref{D2}, or $D=2=\Gam$ at the cells that rule \ref{D2} does not reach, which are blue by
\ref{P2} or rule \ref{D5} and have $\Gam=2$ by the congruences of Lemma~\ref{lem:timeline}(a).
The top and bottom corner cells lie in the interiors of the innermost boxes of the two chains,
where $D=\Gam=2$ by class (E1) and \ref{P3}.

(b) In the coordinates $(x,y)$ a red anti-diagonal is joined to the red cells of the central
lattice on the two neighbouring even anti-diagonals. At a cell $(x,y)$ with $x$ even the path
turns through $(x,y-1)$, and with $x$ odd through $(x+1,y)$, which produces the staircases
with runs of $3$ cells, and of $2$ cells at the two ends. The colours along the sides follow
from (a) and from the congruences $c_0\equiv w_n+2$ and $e_{n+1}\equiv w_n+2\pmod4$.
\end{proof}

The same structure occurs at every junction inside a box. Suppose a copy of $S_k$
with closing vertex $(u_B,v_B)$ is followed $4$ rows later by a sibling.
If the sibling is a copy of $S_{k+1}$, the transition square is
$[u_B,\,u_B+2k+2]^2$, of side $2k+3$. If the sibling is a copy of $S_{k-1}$,
the square is $[u_B,\,u_B+2k]^2$, of side $2k+1$. The proof of (a) applies with
the chains of the junction in place of the chains of $S_n$ and $S_{n+1}$.

\section{The argument triples}\label{sec:triples}

\begin{thm}\label{thm:triples}
Let $\Theta=\{(t(x-1,y),t(x-1,y-1),t(x,y-1)) : x,y\ge1\}$ be the set of triples at which $f$ is
evaluated in the computation of $t$.
\begin{alphenum}
\item $\Theta=\F^3\setminus\{\trip000,\trip101,\trip012,\trip210\}$, so $\Theta$ has $23$ elements.
\item For $g:\F^3\to\F$, the double sequence with rule $g$ and the same boundary values equals
$t$ if and only if $g=f$ on $\Theta$. Consequently $3^4=81$ functions generate $t$, of which
$3^3=27$ are symmetric in $x$ and $z$. Agreement with $f$ on a proper subset of
$\Theta$ does not force equality of the sequences.
\item The triples $\trip001$ and $\trip100$ occur at $(2,3)$ and $(3,2)$ only. The triples
$\trip002$ and $\trip200$ occur at $(2,y)$ with $y\ge4$ and at $(x,2)$ with $x\ge4$ only. The
triples $\trip110$ and $\trip011$ occur at $(1,y)$ with $y\ge3$ and at $(x,1)$ with $x\ge3$
only. Each of the remaining $17$ triples occurs at infinitely many cells with $x,y\ge3$.
\end{alphenum}
\end{thm}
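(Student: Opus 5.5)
The plan uses Theorem~\ref{thm:main}: every cell is given explicitly by $D$, so $\Theta$ is the set of parent triples $(D(\cx),D(\cy),D(\cz))$ over all cells $c=\cl sd$ with $|d|\le s-2$. The proof of Theorem~\ref{thm:main} already lists, for each class of cells, the triple that occurs there. I will reuse that case analysis as a catalogue of triples rather than only checking the value of $f$ at each.

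\textbf{Part (c) and the inclusion $\Theta\subseteq\F^3\setminus\{\dots\}$.} The white lines give $(1,1,0)$ at $\cl s{s-2}$ with $s\ge4$, that is at $(1,y)$ with $y\ge3$, and the mirror triple $(0,1,1)$ on the other side. The cell $\cl31$ gives $(1,1,2)$. Class (E6), the cells with $x=2$ or $y=2$ and $s\ge5$, gives $(0,0,z)$ or $(x,0,0)$, and I will check that the third entry there is $\Lam_2=2$ for $y\ge4$. The exceptional cells $(2,3)$ and $(3,2)$ give $(0,0,1)$ and $(1,0,0)$, because one parent is $t(2,2)=1$. Then I show that away from $\min(x,y)\le2$, that is for $x,y\ge3$, no triple has two white entries, and none is $(1,0,1)$, $(0,1,2)$ or $(2,1,0)$. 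Two white parents of a single cell would need two border cells adjacent in the pattern $\cx,\cy$ or $\cy,\cz$. By \ref{P1} and Lemma~\ref{lem:timeline}(c), borders are isolated lines whose neighbours on even rows are non-white, so this pattern occurs only along the white lines. The triple $(1,0,1)$ would need a white $\cy$ with both odd-row neighbours red. Going through the successors of border cells in Propositions~\ref{prop:borders} and~\ref{prop:env} (wall cells give $(2,0,2)$, (E3) gives $(2,0,1)$, \ref{D3} after an edge gives $(1,0,2)$ or $(2,0,1)$) shows this never happens, and the same pass excludes $(0,1,2)$ and $(2,1,0)$. These case tables are finite and already written down, so this step is a bookkeeping pass over Lemma~\ref{lem:Eclasses} and the propositions.

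\textbf{Occurrence of the remaining $17$ triples.} For each triple I will exhibit one class of cells that occurs in every box $S_n$ for $n\ge3$, which gives infinitely many occurrences with $x,y\ge3$. The sources are: the lattice cells of Lemma~\ref{lem:lattices}, giving $(2,2,2)$, $(2,1,2)$, $(1,2,2)$, $(2,2,1)$ and similar; the transition rows, giving $(0,1,0)$, $(0,2,0)$, $(1,1,1)$, $(1,2,1)$ and $(1,1,2)$; the edges, giving $(1,0,2)$ and $(2,0,2)$; (E2), giving $(0,2,z)$; and the closing case of Proposition~\ref{prop:trans}(iii), giving $(0,2,1)$. I also need to find $(1,0,0)$-free sources for $(2,0,1)$, $(0,2,2)$, $(1,2,0)$ and the rest. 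I expect this matching to be tedious but mechanical, and a computer scan of a finite window can confirm the list.

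\textbf{Part (b).} If $g=f$ on $\Theta$, the same anti-diagonal induction as in Lemma~\ref{lem:unique} shows that the $g$-sequence equals $t$, since each evaluation lands in $\Theta$. This leaves $3^4$ free choices. For symmetric $g$, the four excluded triples form two mirror pairs, $\{(0,0,0)\}$, $\{(1,0,1)\}$ and $\{(0,1,2),(2,1,0)\}$, giving $3^3$. For the converse and for minimality, take any $\theta\in\Theta$ and change $g$ at $\theta$ only, keeping $g=f$ elsewhere. At the first cell (in anti-diagonal order) whose parent triple is $\theta$, the earlier cells still agree with $t$, so that cell now differs from $t$. The main obstacle is the exhaustive exclusion argument in (a)/(c), especially confirming that $(1,0,1)$ never occurs and that the list of cells with a white parent from Lemma~\ref{lem:Eclasses} is complete.
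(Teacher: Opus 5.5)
Your route is the paper's. You read the parent triples off $D$ via Theorem~\ref{thm:main}, and you exclude the four triples by a case analysis on which parent is white (a border cell or a white-line cell). You collect the occurring triples from the case analysis of Section~\ref{sec:proof}, and you prove (b) by looking at the first anti-diagonal where a changed triple is used. Two steps, however, do not work as written.

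\textbf{The claim about two white entries is false.} You assert that for $x,y\ge3$ no triple has two white entries, and your justification forgets the pair $(\cx,\cz)$. At a \ref{D2}-cell after an edge both side parents lie on the edge. This gives $(0,1,0)$ at $(5,5)$ and $(0,2,0)$ at $(6,6)$, triples you yourself list as occurring. What you need, and what your parity argument actually proves, is the weaker statement: for $x,y\ge3$ the parent $\cy$ is never white together with $\cx$ or $\cz$. These two parents lie on consecutive anti-diagonals, and the even one contains no border cell by \ref{P1}. So one of them is on a white line (a white-line cell cannot neighbour a box cell, by Lemma~\ref{lem:timeline}(d)), and this forces $\min(x,y)\le2$. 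That statement is what confines $(0,0,z)$ and $(z,0,0)$ to $x=2$ or $y=2$ and excludes $(0,0,0)$.

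\textbf{The ``only'' part of (c) for $(1,1,0)$ and $(0,1,1)$ is missing.} Part (c) asserts that these two triples occur \emph{only} on the white lines, and nothing in your exclusions for $x,y\ge3$ rules them out. The paper gets this from the case analysis on a white $\cz$ that excludes $(2,1,0)$, run with a stronger conclusion. If $\cz$ is a border cell, then $\cy$ is blue or $\cx$ is white. If $\cz$ is on a white line with $d-1=-(s-3)$, then $\cy$ is white or blue. Hence a red $\cy$ with a non-white $\cx$ forces $d-1=s-3$, that is, $c$ on the line $x=1$. This pass runs over the successors $\cl{s'+1}{d'\pm1}$ of white cells, not only over the successors $\cl{s'+2}{d'}$ that you list for $(1,0,1)$.

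\textbf{Smaller corrections.}
\begin{itemize}
\item $(1,1,2)$ has $f=0$, so it arises at inner cells of edges, not at transition cells.
\item $(1,0,2)$ arises at \ref{D3}-cells after an edge, and $(2,0,2)$ at wall cells, not at edges.
\item When $\cy$ is a border cell, $\cx$ and $\cz$ lie on an even row, not an odd one.
\item The excluded triples form three mirror classes, not two pairs.
\item The converse in (b) concerns an arbitrary $g$ differing from $f$ on a nonempty $\Theta'\subseteq\Theta$. Handle it via the first anti-diagonal on which some triple of $\Theta'$ occurs; single-point changes only give the minimality claim.
\item The mirror triples $(2,1,1)$, $(2,0,1)$, $(1,2,0)$, $(2,2,0)$ come for free from the symmetry of $D$.
\item Since every box of level $k$ is a translate of $S_k$, one occurrence inside some $S_k$ already gives infinitely many.
\end{itemize}
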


\begin{proof}
(a) We first show that the four triples do not occur. By \ref{P1} a white cell is a border
cell or a cell of a white line.

The triples $\trip000$ and $\trip101$ require $\cy$ to be white. If $\cy$ is on a white line,
then $c=\cl s{\pm(s-4)}$ with $s\ge5$, a cell of class (E6), and the triple is $\trip00z$ or
$\trip z00$ with $z=D\cl{s-1}{\pm(s-5)}\ne0$, namely $z=1$ for $s=5$ and $z=2$ for $s\ge6$. If
$\cy$ is a border cell, its row is odd, so $\cx$ and $\cz$ lie on an even row and are not white.
If $\cy$ is a wall cell or the corner of a closing edge, then $\cx$ or $\cz$ is blue by
\ref{P2}, and if $\cy$ is an inner cell of an edge, then $\cx$ and $\cz$ are consecutive
$\Gam$-cells, one red and one blue.

The triple $\trip012$ requires $\cx$ white, $\cy$ red and $\cz$ blue. If $\cx$ is on a white
line, then $d+1=\pm(s-3)$. For $d+1=s-3$ we have $c=\cl s{s-4}$, and $\cy$ is on a white line
for $s\ge5$ while $\cy=\cl20$ is blue for $s=4$. For $d+1=-(s-3)$ the cell $c$ is on a white
line and $\cz=\cl{s-1}{-(s-1)}$ is red. If $\cx$ is a wall cell at $d+1=w$, then $\cy$ is blue by
\ref{P2}, or $\cz$ is white, at $s=e+1$ or $s=e+L+1$. If $\cx$ is a wall cell at $d+1=-w$, then
$\cy$ is blue by \ref{P2}. If $\cx$ is an inner cell of an edge, then $\cz$ is white as well. By
symmetry $\trip210$ does not occur either.

The other $23$ triples occur. Table~\ref{tab:triples} lists a first occurrence of each.

(b) If $g=f$ on $\Theta$, induction on $x+y$ shows that the two sequences agree. Conversely,
suppose that $g$ differs from $f$ on a nonempty set $\Theta'\subseteq\Theta$ and let $s^\ast$
be the least anti-diagonal on which some triple of $\Theta'$ occurs in $t$. On the
anti-diagonals before $s^\ast$ the two sequences agree, and at an occurrence of a triple of
$\Theta'$ on $s^\ast$ they differ. The same argument shows that agreement on a proper subset of $\Theta$ does not
force equality. Among the $81$ functions that agree with $f$ on $\Theta$, a function is
symmetric if and only if it takes the same value at $\trip012$ and $\trip210$, which leaves
$27$ choices.

(c) By the proof of (a), $\cx=\cy=0$ forces $\cy$ onto a white line with $c=\cl s{s-4}$ and
$s\ge5$, that is $c=(2,s-2)$, which gives $\trip001$ for $s=5$ and $\trip002$ for $s\ge6$. The
mirror statement gives $\trip100$ and $\trip200$. For $\trip110$, the parent $\cz$ is white and
$\cx,\cy$ are red. If $\cz$ is a border cell, the analysis of $\trip012$ with $x$ and $z$
exchanged shows that $\cy$ is blue or $\cx$ is white. If $\cz$ is on a white line with
$d-1=-(s-3)$, then $c=\cl s{-(s-4)}$ and $\cy$ is white or blue. Hence $d-1=s-3$, so $c$ is on
the white line $x=1$, where the triple is $\trip110$ for $s\ge4$. The mirror statement gives
$\trip011$. Each of the remaining $17$ triples occurs at a cell of the kind named in the last
column of Table~\ref{tab:triples}. Cells of each listed kind occur inside every main
box $S_n$ with $n\ge5$: at the openings, closings or walls of its children, in its
switching square, or in its environment.
\end{proof}

\begin{table}[ht]
\centering\small
\begin{tabular}{ccc>{\raggedright\arraybackslash}p{5.6cm}}
\toprule
$(x,y,z)$ & $f$ & first occurrence $(x,y)$ & where it occurs\\
\midrule
$\trip001$ & 2 & $(2,3)$ & only there\\
$\trip100$ & 2 & $(3,2)$ & only there\\
$\trip002$ & 2 & $(2,4)$ & $x=2$, $y\ge4$\\
$\trip200$ & 2 & $(4,2)$ & $y=2$, $x\ge4$\\
$\trip110$ & 0 & $(1,3)$ & white line $x=1$\\
$\trip011$ & 0 & $(3,1)$ & white line $y=1$\\
$\trip111$ & 2 & $(1,1)$, $(7,7)$ & \ref{D2}-cells before an edge, classes (E1), (E4)\\
$\trip112$ & 0 & $(1,2)$, $(5,4)$ & edges\\
$\trip211$ & 0 & $(2,1)$, $(4,5)$ & edges\\
$\trip010$ & 2 & $(5,5)$ & \ref{D2}-cells after an edge with $\cy$ red\\
$\trip020$ & 1 & $(2,2)$, $(6,6)$ & \ref{D2}-cells after an edge with $\cy$ blue\\
$\trip121$ & 1 & $(4,4)$ & outermost \ref{D2}-cells before an opening\\
$\trip102$ & 1 & $(7,6)$ & \ref{D3}-cells after an edge, class (E3)\\
$\trip201$ & 1 & $(6,7)$ & mirror image of the previous line\\
$\trip202$ & 0 & $(5,6)$ & walls\\
$\trip021$ & 2 & $(6,4)$ & class (E2), \ref{D2}-cells before a closing\\
$\trip120$ & 2 & $(4,6)$ & mirror image of the previous line\\
$\trip022$ & 2 & $(7,5)$ & class (E2)\\
$\trip220$ & 2 & $(5,7)$ & mirror image of the previous line\\
$\trip122$ & 2 & $(4,8)$ & lattices\\
$\trip221$ & 2 & $(3,5)$ & lattices\\
$\trip212$ & 2 & $(3,3)$ & lattices\\
$\trip222$ & 1 & $(3,4)$ & lattices, \ref{D3}-cells in case J4 and in boxes of level $2$\\
\midrule
$\trip000,\trip101,\trip012,\trip210$ & & never & \\
\bottomrule
\end{tabular}
\caption{The $23$ triples $(t(x-1,y),t(x-1,y-1),t(x,y-1))$ that occur in the computation of
$t$, the value of $f$ at each, a cell $(x,y)$ at which the triple first occurs, and the kind of
cell at which it occurs. Where two cells are given, the second is the first occurrence with
$x,y\ge3$.}
\label{tab:triples}
\end{table}

\clearpage
\section{Three letters and three initial symbols}\label{sec:three-initial}

The construction with two initial symbols has the following analogue with three. Put
\[
s_1=\emptyset_1,\qquad s_2=\emptyset_2,\qquad s_3=\emptyset_3,\qquad
s_n=(s_{n-3}\cdots s_1\mid s_1\cdots s_{n-3})\quad(n\ge4).
\]
The initial symbols are distinct atoms. We give a three-letter sequence whose rectangles,
now bordered in red, realise these words.

Let $\rho:\N^2\to\F$ have boundary values $\rho(x,0)=\rho(0,y)=0$ and recurrence rule
\begin{equation}\label{eq:three-initial-rule}
h(x,y,z)=(y-1)^2(x^2+z^2)+y(y+1)(x+z)+xz(x+z+2xz)+2y+1
\quad\text{in }\F.
\end{equation}
As before, $0$ is white, $1$ red and $2$ blue. Figure~\ref{fig:three-initial} shows the
beginning of the sequence. For the proof it is convenient to exchange $0$ and $1$, fixing
$2$. Thus $u=1-\rho$ has boundary value $1$ and the rule
\begin{equation}\label{eq:three-initial-aux}
h_0(x,y,z)=2+2y+2y^2+2y^2(x^2+z^2)+2xz(x+z)+x^2z^2.
\end{equation}
Indeed $h(x,y,z)=1-h_0(1-x,1-y,1-z)$ in $\F$. All pattern values below refer to $u$.
Subtracting these values from $1$ in $\F$ gives the colours of $\rho$.

\begin{figure}[ht]
\centering
\includegraphics[width=0.48\textwidth]{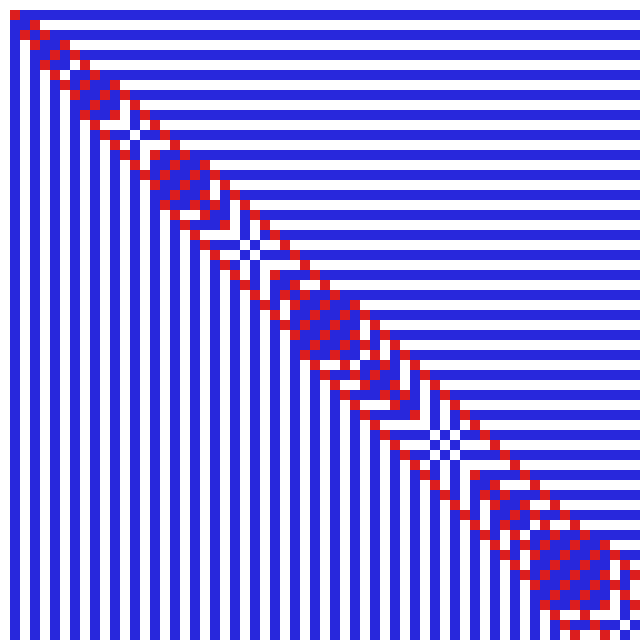}\hfill
\includegraphics[width=0.48\textwidth]{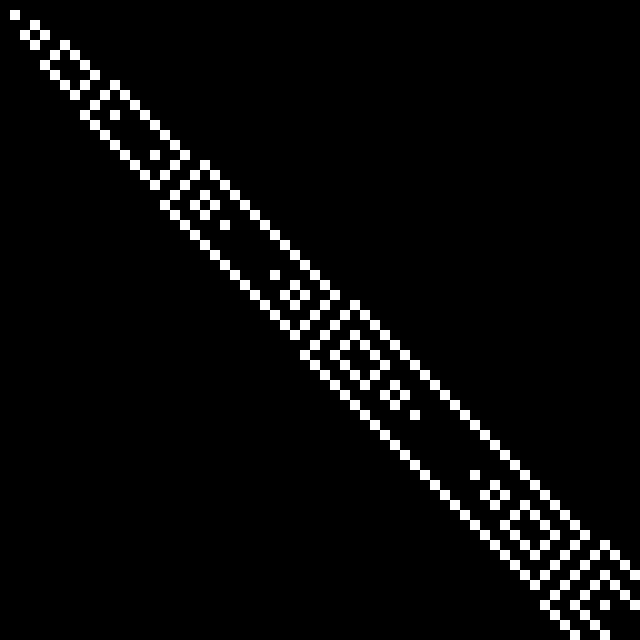}
\caption{The cells $(x,y)$ with $0\le x,y<64$ of $\rho$.
Left: white $0$, red $1$ and blue $2$. Right: red is drawn in white, and the other two
colours in black. The first five main boxes are complete, and the sixth begins at the
bottom right. The first three boxes have no children. The fourth contains two copies of
the first, and the fifth contains copies of the second, first, first and second.}
\label{fig:three-initial}
\end{figure}

\subsection{The box patterns}
Define their durations by
\[
b_1=2,\quad b_2=6,\quad b_3=14,\qquad
b_n=b_{n-1}+2b_{n-3}+8\quad(n\ge4),
\]
and their main opening times by $c_1=5$, $c_{n+1}=c_n+b_n+3$. The box region of level $n$
opening at $e$ is
\[
C(n,e)=\{\cl sd:e\le s\le e+b_n,\ |d|\le n\},\qquad e\ge n,\quad e\equiv n\pmod2.
\]
The main regions are $C_n=C(n,c_n)$. A region of level $n\le3$ has no children. For $n\ge4$
its children are $C(k,e+\alpha_{n,k})$ and $C(k,e+\beta_{n,k})$, $1\le k\le n-3$, where
\[
\alpha_{n,k}=3+\sum_{j=k+1}^{n-3}(b_j+3),\qquad
\beta_{n,k}=b_n-b_k-\alpha_{n,k}.
\]
Write $\mathcal H$ for the main regions and all their descendants.

The first child opens $3$ rows after its parent, consecutive children within each half
are separated by $3$ rows, and the last child closes $3$ rows before its parent. The gap
between the two children of level $1$ has length $2n+8$. These statements follow from
\begin{equation}\label{eq:three-initial-length}
b_n=2\sum_{k=1}^{n-3}b_k+8n-10\quad(n\ge3).
\end{equation}
The sum is empty for $n=3$. Induction also gives
\[
b_n\equiv2\pmod4,\quad c_n\equiv n\pmod4,\quad
\alpha_{n,k}\equiv n-k,\quad\beta_{n,k}\equiv k-n\pmod4,
\]
and $b_n/2\equiv2n+1\pmod4$ for $n\ge3$.

\begin{defn}[the pattern $H_n$]\label{def:three-initial-pattern}
Let $0\le r\le b_n$, $|d|\le n$ and $n+r\equiv d\pmod2$.
The value $H_n(r,d)$ is given by the first applicable rule below. An edge $(a,k)$ means
$(0,n)$ or $(b_n,n)$, or an opening or closing edge of a child, at relative time $a$
and with half-width $k$.
\begin{romenum}
\item On the border, $H_n(r,d)=0$.
\item In a child of level $k$ opening at offset $a$, use $H_k(r-a,d)$.
\item If $1\le|r-a|\le2$ and $|d|\le k-|r-a|$ for an edge $(a,k)$, use $2$.
\item If $n\ge3$, $d=0$ and $|r-b_n/2|=n+1$, use $0$.
\item If $n\ge3$ and $|r-b_n/2|+|d|\le n-1$, use $1$ for even $d$ and $2$ for odd $d$.
\item Otherwise use $1$ if
$\min(r,b_n-r)+|d|\equiv n-2\pmod4$, and $2$ otherwise.
\end{romenum}
\end{defn}

Rules (iv) and (v) describe the two exceptional diagonal cells and the switching diamond.
For $n=1,2$ these rules do not apply. In particular the first three patterns, although
they contain no child boxes, are distinct.

Define $U$ on the whole quadrant as follows. Set $U\cl20=0$. Inside $C_n$ use
$H_n(s-c_n,d)$. Outside the main regions, put $U\cl sd=2$ if
$1\le|s-a|\le2$ and $|d|\le n-|s-a|$ for a main edge $(a,n)$. At every remaining cell put
\[
U\cl sd=\begin{cases}1,&s-|d|\equiv0\pmod4,\\2,&\text{otherwise.}\end{cases}
\]
In particular the axes have value $1$.

\begin{lem}[local verification]\label{lem:three-initial-local}
The function $U$ satisfies the recurrence with rule $h_0$.
\end{lem}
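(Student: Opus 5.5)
The check is whether $U(c)=h_0(U(\cx),U(\cy),U(\cz))$ holds at every cell $c=\cl sd$ with $|d|\le s-2$. With the boundary this gives $U=u$ through Lemma~\ref{lem:unique}, applied with $h_0$ in place of $f$. The plan is to show that correctness at $c$ depends only on the values of $U$ in a bounded window around $c$. That window is determined by a small amount of data: which level-independent rule of Definition~\ref{def:three-initial-pattern} applies, the local offsets to the nearest edges, and the residues modulo $4$. Then one shows that only finitely many such window configurations occur, and verifies those by computer. This is the sense of the abstract's "finite sets of local configurations".

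First I record the local shape. The value at $c$ and at its three parents is a function of a few quantities:
\begin{itemize}
\item the host region and the rule (i)--(vi) that applies;
\item $s$ and $|d|$ modulo $4$, using $c_n\equiv n$, $b_n\equiv 2$, $\alpha_{n,k}\equiv n-k$, $\beta_{n,k}\equiv k-n$ and $b_n/2\equiv 2n+1$;
\item the offsets to the nearest edges, walls and diamond, each truncated at a constant such as $6$.
\end{itemize}
Since consecutive edges in a host are $3$ rows apart (or farther in the switching gap), the first child opens $3$ rows after its parent, and children have half-width at most $n-3$, the window of radius about $3$ around $c$ sees a bounded number of edges. Their relative half-widths are constrained to the finitely many jump patterns of the analogue of Table~\ref{tab:J}, namely $\pm3$, $\pm1$ and the like, which follow from \eqref{eq:three-initial-length}.

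Second, I prove a translation lemma analogous to Remark~\ref{rem:copies}. The configuration seen from a cell depends on the level $n$ only through $\min(n,N_0)$ for an explicit $N_0$, say $N_0=8$, and through $n\bmod 4$. Above that level, enlarging $n$ only widens walls and the diamond, and stretches the switching gap of length $2n+8$, without creating new local pictures. The reason is that the parity rule (vi) uses $\min(r,b_n-r)+|d|\equiv n-2$, and by the congruences above this agrees with a fixed global lattice up to phase. The configurations near the quadrant corner, outside all main regions, and near $U\cl20=0$ form a finite initial segment $s\le s_0$. So the set of window configurations is exactly the set realised in the cells $s\le S$ of the main regions $C_1,\dots,C_{N_0+4}$ and their surroundings. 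That finite set is enumerated and checked against $h_0$ by computer.

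The main obstacle is this finiteness and realisation claim: that every window at a large level already occurs, up to translation and reflection $d\mapsto-d$, at a small level. I would prove it by an induction over the nesting of $\mathcal H$. Inside a child, rule (ii) reduces the question to $H_k$, except in the two-row transition bands near child edges. There the window contains one parent edge and one child edge, and the relevant data are their half-widths, their separation $3$, and residues modulo $4$. These are bounded in type once half-widths exceed a constant, because walls farther than the window radius are invisible. The remaining danger is the diamond and the two exceptional diagonal cells at $|r-b_n/2|=n+1$. Their neighbourhoods are determined by $n\bmod 4$ together with the truncated distances, so they too reduce to small levels.
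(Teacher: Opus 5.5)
Your proposal follows essentially the same route as the paper. Both reduce every window to a finite catalogue by lowering levels in steps of $4$, keeping the residues modulo $4$ and the truncated distances to edges, walls, the diamond and the exceptional points, and then check that catalogue by exact evaluation of the description. The paper carries out your ``main obstacle'' explicitly: five box contexts (main box, first or other first-half child, last or other second-half child) fix the exterior phase, and separate level-lowering maps handle central, horizontal-edge and wall windows. Your induction sketch omits the wall-straddling windows away from edges, and the paper's cutoffs ($3\le n\le15$ and $K\le19$) are larger than your tentative $N_0=8$.
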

\begin{proof}
For a centre $\cl sd$ with $|d|\le s-2$, its ordered window consists of the values at
\[
\cl sd,\quad \cl{s-1}{d+1},\quad \cl{s-2}d,\quad \cl{s-1}{d-1}.
\]
We reduce these windows to a finite catalogue and check the catalogue by exact
evaluation of Definition~\ref{def:three-initial-pattern}. Reflection exchanges the
two side parents, and $h_0$ is symmetric in those arguments, so it suffices to use
centres with $d\ge0$. The reduction has three parts: windows meeting a central
diamond or exceptional point, windows meeting a border or transition row, and
windows following a single lattice rule.

\emph{The five contexts of a box.} A level-$k$ box is a main box, a child in the
first half of its parent's list, or a child in the second half. In the first half
we distinguish the first child, and in the second half the last child. These give
the following representatives, all taken from the explicit description $U$:
\[
\begin{array}{l|l}
\text{context of the level-$k$ box}&\text{opening time of its representative}\\ \hline
\text{main box}&c_k\\
\text{first child in the first half}&c_{k+3}+3\\
\text{other child in the first half}&c_{k+4}+b_{k+1}+6\\
\text{last child in the second half}&c_{k+3}+b_{k+3}-b_k-3\\
\text{other child in the second half}&c_{k+4}+b_{k+4}-b_k-b_{k+1}-6.
\end{array}
\]
These representatives also have the required lattice outside the box. If $r$ is
measured from the child's opening, substitution of $\alpha_{n,k}\equiv n-k$ in
the parent's first-half lattice gives value $1$ when $r+|d|\equiv k-2\pmod4$.
Substitution of $\beta_{n,k}\equiv k-n$ in its second-half lattice gives value $1$
when $b_k-r+|d|\equiv k-2\pmod4$. The main exterior has the latter phase because
$c_k\equiv k\pmod4$. Thus the outside phase depends on the listed context, not
on the level of a more distant ancestor.

\emph{Windows at the centre of a box.} Put $h=b_n/2$, $u=r-h$ and $a=|d|$.
Suppose a window meets the diamond $|u|+a\le n-1$ or one of the points
$d=0$, $|u|=n+1$, where $n\ge3$. On all four cells of that window the values are
given, in the displayed order of priority, by
\[
J_n(u,d)=\begin{cases}
0,&a=n,\\
2,&a=0,\quad |u|=n+3,\\
0,&a=0,\quad |u|=n+1,\\
1,&|u|+a\le n-1,\quad d\text{ even},\\
2,&|u|+a\le n-1,\quad d\text{ odd},\\
1,&|u|-a\equiv n-1\pmod4,\\
2,&\text{otherwise.}
\end{cases}
\]
To verify this formula, the adjacent children of level $1$ close and open at
$u=-(n+4)$ and $u=n+4$. Their transition cells at $d=0$, $|u|=n+3$ have value $2$.
For $n=3$ the same cells are transitions of the box's own edges. No other child
reaches the window. The remaining interior cells follow rule (vi), where
$\min(r,b_n-r)=h-|u|$ and $h\equiv2n+1\pmod4$ give the stated congruence.
A window can also reach the wall $a=n$, of value $0$, and the exterior column
$a=n+1$. On this exterior column the two possible outside phases agree at the
permitted parity, and give the last two lines of the formula.

The possible nonnegative centres satisfy
\[
-n-1\le u\le n+3,\qquad 0\le d\le n,\qquad n+1+u\equiv d\pmod2,
\]
with at least one of the four cells in the diamond or at an exceptional point.
If all four cells lie in the diamond, the parent triple is $(2,1,2)$ or
$(1,2,1)$, giving the required value $1$ or $2$. For any other such window with
$n\ge16$, the following map lowers the level by $4$ and preserves all four values:
\[
(n,u,d)\longmapsto
\begin{cases}
(n-4,u,d-4),&d\ge6,\\
(n-4,u-4\operatorname{sgn}(u),d),&d\le5.
\end{cases}
\]
In the first case all four absolute positions decrease by $4$. Their distances
from the wall, the diamond inequality and the residues modulo $4$ are preserved,
and no cell reaches $d=0$. In the second case, a cell outside the diamond implies
$|u|\ge n-7\ge9$. All four time offsets therefore have the same sign and their
absolute values decrease by $4$. This preserves the diamond and exceptional-point
tests, including the two transition cells in $J_n$. All positions are at most
$6$, below the new wall at $n-4\ge12$. Iteration reduces the check to $3\le n\le15$.

\emph{Windows near horizontal edges.} Exclude the central windows just treated.
If an edge at time $e$ or one of its transition cells belongs to a window, its
centre satisfies $-2\le s-e\le4$. Hence the relevant edge times lie in
$[s-4,s+2]$. Choose the edge of greatest half-width $K$ in this interval.
Then $d\le K+1$. Outside the three small patterns, successive nearby edges are
separated by $3$ rows. The changes in half-width are $-3$ from a parent's opening
to its first child's opening, $-1$ between siblings in the first half, $1$ between
siblings in the second half, $3$ from the last child's closing to its parent's
closing, and $1$ between successive main boxes. An interval of length $6$ meets
at most three such edges, whose widths differ by at most $6$.

The context table specifies these neighbours. A first or last child has parent
level $K+3$, and a nonextreme child has the neighbouring sibling of level $K+1$
on the relevant side. The box's own first and last children have level $K-3$.
Further ancestors cannot contribute an edge in this interval without being
included in the choice of $K$. Their walls are at least three columns beyond
the selected wall, so they cannot reach the window. Their remaining contribution
is the outside lattice phase already calculated above. Consequently the five
contexts cover all horizontal-edge configurations.

For $K\ge20$, put $K'=16+((K-16)\bmod4)$. Use the same context at level $K'$,
preserving the centre's time offset from the selected opening or closing edge.
Replace its position by
\[
d'=\begin{cases}
d,&d\le1,\\
2+((d-2)\bmod4),&2\le d\le K-9,\\
d-(K-K'),&d\ge K-8.
\end{cases}
\]
All relevant levels decrease by $K-K'$, a multiple of $4$. Their edge spacings
remain $3$, and the endpoint phases remain unchanged because $b_k\equiv2\pmod4$.
In the last case, all four positions decrease by the same amount, preserving
wall distances and transition cutoffs. In the first two cases all four original
positions are at most $K-8$, within every relevant transition cutoff. The target
positions are at most $6$, while the smallest target cutoff is at least
$K'-8\ge8$. Horizontal rows therefore keep their values. Lattice values keep
their residues modulo $4$, including the two sides of $d=0$. Thus it suffices
to check levels $1\le k\le19$ in the five contexts, at each opening and closing,
with centre offsets $-2,\dots,4$ and $0\le d\le k+1$.

\emph{Windows near a wall.} After the central and horizontal-edge cases, a
window meeting a wall of half-width $k$ has centre $d\in\{k-1,k,k+1\}$. Its
inside and outside cells follow the two lattice phases determined by the context.
Children lie at least three columns further in and cannot meet this window.
A change between the two inside phases would meet the diamond, since the diamond
reaches $|d|=k-1$ on the middle row. Thus the only data needed are the side of
the middle row, the context, $s\bmod4$ and $d-k$.
For $k\ge6$, the same time residue can be represented by a relative row in
$6,\dots,9$ on the first side, or $b_k-9,\dots,b_k-6$ on the second side.
These rows avoid edges and the middle row. Reducing $k\ge20$ to $K'$ as above
and preserving $d-k$ reduces the width as well. For $k\le5$ we include all
relative rows $0,\dots,b_k$ in the catalogue.

\emph{The remaining windows and the finite check.} The initial cell $\cl20$
and windows containing it are checked directly, at centres $\cl20$, $\cl31$
and $\cl40$, together with their reflections. Any remaining window follows
one lattice phase throughout. Indeed, on the diagonal the children, borders
and transitions cover the noncentral parts of a box. In the central gap the
remaining diagonal cells are the two exceptional points and the diamond.
This follows from the gap $2n+8$, and is immediate in levels $1,2,3$ from
their definitions. The gaps of length $3$ between main boxes are covered by
their transitions. Thus no remaining window is centred on the diagonal.
Off the diagonal a single lattice phase has parent triple $(1,2,2)$ or $(2,2,1)$
when the value is $1$, and $(2,1,1)$ or $(1,1,2)$ when the value is $2$.
Substitution in \eqref{eq:three-initial-aux} verifies these four cases.

For the finite catalogue, discard centres of wrong parity and centres with
$d>s-2$. Evaluate the four entries from the explicit description, then check
that the first equals $h_0$ applied to the last three. This computation checks
$7930$ horizontal-edge windows, $1570$ wall windows and $858$ central windows,
with no failures.
These counts include repeated value quadruples in different contexts.
Table~\ref{tab:three-initial-local} summarises the resulting triples by cell type.
The reductions above establish coverage at arbitrary levels, so this finite
check proves the lemma.
\end{proof}

\begin{table}[ht]
\centering\small
\begin{tabular}{lcl}
\toprule
kind of cell & $U(c)$ & argument triples, up to reversal\\
\midrule
opening edge & $0$ & $222$\\
closing edge & $0$ & $102,\ 222$\\
wall, away from corners & $0$ & $102$\\
transition cell & $2$ & $011,\ 012,\ 020,\ 121,\ 202$\\
exceptional point before the diamond & $0$ & $222$\\
exceptional point after the diamond & $0$ & $111$\\
diamond, even position & $1$ & $022,\ 101,\ 122,\ 212$\\
diamond, odd position & $2$ & $011,\ 112,\ 121$\\
interior lattice & $1$ & $021,\ 022,\ 122$\\
interior lattice & $2$ & $011,\ 012,\ 112,\ 202$\\
exterior lattice & $1$ & $022,\ 122$\\
exterior lattice & $2$ & $011,\ 112$\\
initial cell $\cl20$ & $0$ & $111$\\
\bottomrule
\end{tabular}
\caption{The local check for $U$. Here $abc$ denotes the triple $(a,b,c)$, and reversal
means interchanging its first and third entries. Edge and wall cases refer to the
smallest box whose border contains the cell.}
\label{tab:three-initial-local}
\end{table}

\begin{thm}[three initial symbols]\label{thm:three-initial}
For all $x,y\ge0$, $\rho(x,y)=1-U\cl{x+y}{y-x}$ in $\F$.
The rectangles centred on the diagonal with red borders are exactly the boxes of
$\mathcal H$. Their main boxes are $C_1,C_2,\dots$, and the boxes directly inside $C_n$
have levels $n-3,\dots,1,1,\dots,n-3$ for $n\ge4$. The first three levels have no children.
Consequently their ordered nesting realises the mirrored construction with three initial symbols.
\end{thm}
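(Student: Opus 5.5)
The first assertion follows the pattern of Theorem~\ref{thm:main}. First I check that $u=1-\rho$ is the recurrent double sequence with boundary value $1$ and rule $h_0$. This follows from the identity $h(x,y,z)=1-h_0(1-x,1-y,1-z)$ by induction on $x+y$. The rule $h_0$ is symmetric in $x$ and $z$, and the argument of Lemma~\ref{lem:unique} uses only the recurrence, the boundary and this symmetry, so the uniqueness lemma applies verbatim with $h_0$ in place of $f$.

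Next I check that $U$ has the right boundary values. Every axis cell $\cl s{\pm s}$ lies outside the main regions, because $C_n$ only contains cells with $|d|\le n$, $s\ge c_n$ and $c_n>n+2$. No such cell is near a main edge in the sense of the transition rule. The lattice rule then gives $U=1$ there, since $s-|d|=0$. The single exceptional value $U\cl20=0$ is not an axis cell.

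Lemma~\ref{lem:three-initial-local} gives correctness of $U$ at every cell with $|d|\le s-2$. Hence $U=u$, and $\rho=1-U$ in $\F$.

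For the structural part I would proceed as in Corollary~\ref{cor:boxes}. In $\rho$, red corresponds to the value $0$ of $u$, so the red cells are the cells with $U=0$. By Definition~\ref{def:three-initial-pattern} and the definition of $U$, these are the borders of regions of $\mathcal H$ (rule (i)), the two exceptional diagonal points of each box of level at least $3$ (rule (iv)), and the cell $\cl20$. Every other rule yields $1$ or $2$.

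Let $R$ be a red-bordered rectangle centred on the diagonal, with half-width $W$. Its wall cells must be wall cells of regions of half-width $W$, that is of level $W$, because isolated diagonal points cannot form a wall of length at least $2$. Two such regions are separated in time by at least $3$ rows, and the columns $|d|=W$ between them carry transition or lattice values $1,2$. So the walls of $R$ come from a single region $C(W,e)$. Inside it no row except its two edges is red across $|d|\le W$, because children have half-width at most $W-3$. The central exceptional points lie at $d=0$ only, and the diamond carries values $1,2$. Hence $R=C(W,e)$.

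It remains to establish the nesting. Main regions are pairwise disjoint and are not contained in one another, since $c_{n+1}=c_n+b_n+3$. The children of $C(n,e)$ are, in time order, of levels $n-3,\dots,1,1,\dots,n-3$, as read off from $\alpha_{n,k}$ and $\beta_{n,k}$ together with \eqref{eq:three-initial-length}. Regions of levels $1,2,3$ have no children. Together this gives the isomorphism with the occurrence tree of $s_n$, with atoms $\emptyset_1,\emptyset_2,\emptyset_3$ at levels $1,2,3$, exactly as in Corollary~\ref{cor:ordinals}.

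The main obstacle I expect is the identification of red rectangles with boxes. There I must rule out spurious red rectangles, for example ones using the exceptional diagonal points or the transition zones as walls. I would handle this by a direct inspection of the finitely many local shapes near a box, using the value table of Lemma~\ref{lem:three-initial-local}.
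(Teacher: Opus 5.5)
Your first half is the paper's argument: the conjugacy $h(x,y,z)=1-h_0(1-x,1-y,1-z)$, Lemma~\ref{lem:three-initial-local} and uniqueness give $u=U$. Your explicit check of the axis values of $U$ is a useful detail that the paper leaves implicit. The structural half also follows the paper's outline, but its key step has a gap.

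You assert that the wall cells of a red rectangle $R$ of half-width $W\ge1$ are wall cells of regions of half-width $W$. The only reason you give is that diagonal points cannot form a wall, but those points have $d=0$ and never lie on a column $|d|=W$ anyway. The zero cells of $U$ that actually need excluding are the interior cells of the opening and closing edges of every wider box, of half-width $n>W$ with $n\equiv W\pmod 2$, because such an edge is zero on all of $|d|\le n$. Nothing in your argument stops a wall of $R$ from running partly or wholly through such edge cells. The ``direct inspection of finitely many local shapes'' that you defer to is not specified, and it is not finite without a reduction over the levels. The transition zones you mention are not red, so they are not the danger.

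The missing observation, which the paper uses, is short. Let $\cl{a}{d}$ be an interior cell of an edge of half-width $n$. Then $|d|\le n-2$ by parity. The transition rule, rule (iii) of Definition~\ref{def:three-initial-pattern} or the exterior transition rule of $U$ at main edges, gives $U\cl{a\pm2}{d}=2$. No border takes priority at these cells, because edges of distinct boxes are at least $3$ rows apart. Every wall cell of $R$ has another wall cell two rows before or after it in the same column, so no wall cell of $R$ is such an edge cell. Hence all of them are wall or corner cells of boxes of half-width $W$. Two consecutive wall cells of $R$ then lie in the same box, since distinct boxes of half-width $W$ have edge times of the same parity and so are at least $4$ rows apart.

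This also makes unnecessary your claim that the column $|d|=W$ between two such boxes carries only the values $1$ and $2$. That claim is false in general, because wider edges can cross the column between them. For example, the opening edge of $C_5$ at $s=65$ meets $|d|=1$ in a zero cell, between the level-$1$ child of $C_4$ opening at $57$ and the level-$1$ child of $C_5$ opening at $77$.

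With this fix, the rest of your argument agrees with the paper: no row strictly inside the box is entirely zero, and the nesting is read off from the child offsets.
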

\begin{proof}
Lemma~\ref{lem:three-initial-local} and uniqueness of the recurrence give $u=U$, and
the conjugacy of the rules gives $\rho=1-U$.

The zero cells of $U$ are precisely the borders in $\mathcal H$, the two exceptional
diagonal cells in each box of level at least $3$, and $\cl20$.
The latter cells have $d=0$, so none can lie on a wall of a rectangle of positive half-width.
An interior cell of an edge of half-width $n$ has value $2$ two rows before and after it
in its column. Hence a wall of a zero-bordered rectangle must follow a wall of a box of
the same half-width. Distinct such boxes have a nonzero gap in that column: their time
separation is at least $3$, and the transition and lattice rules give nonzero values
between their walls. A wall therefore belongs to a single box. Between its opening and
closing, no row of the appropriate parity is entirely zero: for width $n\ge2$ the
positions $|d|=n-2$ are nonzero, and the width-$1$ box has only its two border rows of
that parity. Thus the rectangle is the whole box. The child positions are those used
in its definition, so the asserted nesting follows.
\end{proof}

\begin{cor}[number of boxes]\label{cor:three-count}
Let $M_n$ count all boxes in $C_n$, including itself. Then
\[
M_1=M_2=M_3=1,\qquad M_n=M_{n-1}+2M_{n-3}\quad(n\ge4),
\]
and two representations of these numbers, valid for every $n\ge1$, are
\[
M_n=\sum_{j=0}^{\lfloor(n-1)/3\rfloor}2^j\binom{n-1-2j}{j},\qquad
M_n=\left\lfloor\frac{3^{n^2+2n}}{3^{3n}-3^{2n}-2}\right\rfloor\bmod 3^n.
\]
The second expression is an arithmetic term, using the least nonnegative remainder.
\end{cor}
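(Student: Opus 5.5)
The plan has three steps: derive the recurrence from the child structure, solve it by counting compositions, and turn the generating function into an arithmetic term.

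\emph{The recurrence.} By Theorem~\ref{thm:three-initial}, a box of level $n\le3$ has no children, and a box of level $n\ge4$ has two children of each level $k=1,\dots,n-3$. Hence
\[
M_1=M_2=M_3=1,\qquad M_n=1+2\sum_{k=1}^{n-3}M_k\quad(n\ge4).
\]
For $n\ge5$, subtracting the equation for $n-1$ from the one for $n$ gives $M_n-M_{n-1}=2M_{n-3}$. At $n=4$ we have $M_4=1+2M_1=3=M_3+2M_1$. So the recurrence holds for all $n\ge4$.

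\emph{The binomial sum.} Put $F(x)=\sum_{n\ge1}M_nx^{n-1}$. The recurrence with $M_1=M_2=M_3=1$ gives $F(x)=1/(1-x-2x^3)$. To check the initial terms, the numerator is $(1-x-2x^3)F$ truncated, and the coefficients of $x^1$ and $x^2$ vanish because $M_2-M_1=0$ and $M_3-M_2=0$.

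Expand $F=\sum_m(x+2x^3)^m$. The coefficient of $x^{n-1}$ collects the words in parts $1$ and $3$ with total $n-1$, each part $3$ weighted by $2$. With $j$ parts equal to $3$ there are $n-1-3j$ parts equal to $1$. These form $n-1-2j$ parts in total, so there are $\binom{n-1-2j}{j}$ arrangements. This gives the sum. Alternatively, one can check directly that the sum satisfies Pascal's rule in the form $M_n=M_{n-1}+2M_{n-3}$.

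\emph{The arithmetic term.} This is the main obstacle. Substitute $x=3^{-n}$ and set $B=3^n$. Multiplying $F(1/B)=\sum_{m\ge0}M_{m+1}B^{-m}$ by $B^3$ gives
\[
\frac{B^{3}}{B^3-B^2-2}=\sum_{m\ge0}M_{m+1}B^{2-m}.
\]
Multiplying further by $B^{n-1}$ gives
\[
\frac{3^{n^2+2n}}{3^{3n}-3^{2n}-2}=\sum_{m\ge0}M_{m+1}B^{n+1-m}.
\]
The term $m=n-1$ is $M_nB^2$. That is not $M_n$ times a unit, so the exponent needs rechecking: the statement uses $3^{n^2+2n}=B^{n+2}$. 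Multiplying $F(1/B)$ by $B^{n+2}$ instead, the term with $B^{n+2-m}$ equal to $B^{0}$ would be $m=n+2$, giving $M_{n+3}$. So I must match exponents carefully.

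Write the left side as $B^{n+2}/(B^3-B^2-2)=B^{n-1}F(1/B)=\sum_m M_{m+1}B^{n-1-m}$. The unit term is $m=n-1$, which is $M_n$. The terms with $m<n-1$ are multiples of $B$ and vanish modulo $B$. The tail $\sum_{m\ge n}M_{m+1}B^{n-1-m}$ must lie in $[0,1)$, so that taking the floor removes it exactly. I also need $0\le M_n<B$, so that reducing modulo $3^n$ returns $M_n$ itself.

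Both facts come from a crude growth bound. The characteristic root of $x^3=x^2+2$ is about $1.7$, so induction gives $M_k\le2^{k-1}$. Then $M_n\le2^{n-1}<3^n$. The tail is at most $\sum_{i\ge1}2^{n-1+i}3^{-ni}$, which is less than $1$ for $n\ge1$. The case $n=1$ needs a direct check, since there the bound is tight: the tail is $\sum_{i\ge1}M_{1+i}3^{-i}$ with $M_2=M_3=1$ and $M_4=3$. This requires a sharper bound $M_k\le c\,1.7^k$ for the first terms, or else an explicit computation of the small cases $n=1,2$ with the floor evaluated numerically.
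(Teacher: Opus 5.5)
Your route is the paper's: the child count $M_n=1+2\sum_{k=1}^{n-3}M_k$, the generating function $1/(1-z-2z^3)$ expanded as $\sum_m(z+2z^3)^m$, and evaluation at $3^{-n}$. Your $B^{n-1}F(1/B)=\sum_{r\ge1}M_rB^{n-r}$ is exactly the paper's $3^{n^2}G(3^{-n})$ with $M_0=0$. The recurrence and the binomial sum are fine.

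The last step does not yet cover all $n\ge1$. With $M_k\le 2^{k-1}$, the tail $\sum_{i\ge1}M_{n+i}3^{-ni}$ is bounded by $2^{n-1}\sum_{i\ge1}(2/3^n)^i=2^n/(3^n-2)$. This bound is $<1$ for $n\ge2$; at $n=2$ it is already $4/7$, so $n=2$ needs no separate check. At $n=1$, however, it equals $2$. So your sentence ``less than $1$ for $n\ge1$'' is false there, and the bound is not tight at $n=1$ but simply useless. You leave this case open. It closes in one line in either of two ways:
\begin{itemize}
\item directly: $\lfloor 27/16\rfloor=1$ and $1\bmod 3=1=M_1$;
\item uniformly, as in the paper, by shifting the bound one index. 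Use $M_r\le 2^{r-2}$ for $r\ge2$, with strict inequality for $r\ge3$: check $r=2,3,4$, then $M_r\le 2^{r-3}+2\cdot2^{r-5}<2^{r-2}$. This gives tail $<2^{n-1}/(3^n-2)\le 1$ for every $n\ge1$.
\end{itemize}

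Two smaller repairs. First, the first two displays of your last part are wrong. $F(1/B)$ already equals $B^3/(B^3-B^2-2)$, so both displays are off by a factor $B^2$. Replace them by the identity $B^{n+2}/(B^3-B^2-2)=\sum_m M_{m+1}B^{n-1-m}$ that you reach later. Second, add the short justification that the expression is an arithmetic term. The denominator $3^{3n}-3^{2n}-2$ is positive for $n\ge1$, so the subtractions may be read as truncated subtraction, and $a\bmod m=a-m\lfloor a/m\rfloor$.
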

\begin{proof}
The children give $M_n=1+2\sum_{k=1}^{n-3}M_k$ for $n\ge4$. Subtracting successive
equations gives the recurrence, including $n=4$ by the initial values. Its generating
function is $\sum_{n\ge1}M_nz^{n-1}=1/(1-z-2z^3)$. In the expansion
$\sum_{m\ge0}(z+2z^3)^m$, choosing $j$ factors $2z^3$ and $n-1-3j$ factors $z$
gives the displayed coefficient.

For the arithmetic term, we use the generating-function extraction method of
\cite[Section~4]{PS25}. Put $M_0=0$, so that
$G(z)=\sum_{n\ge0}M_nz^n=z/(1-z-2z^3)$.
Induction gives $M_r\le2^{r-2}$ for $r\ge2$, with strict inequality for $r\ge3$:
check $r=2,3,4$ directly and, for $r\ge5$, use
$M_r\le2^{r-3}+2\cdot2^{r-5}<2^{r-2}$.
In particular $G(3^{-n})$ converges for $n\ge1$. For an integer $A_n\ge0$,
\[
3^{n^2}G(3^{-n})=A_n3^n+M_n+\eta_n,\qquad
\eta_n=\sum_{i\ge1}\frac{M_{n+i}}{3^{ni}}.
\]
The tail satisfies
\[
0\le \eta_n<2^{n-2}\sum_{i\ge1}\left(\frac2{3^n}\right)^i
=\frac{2^{n-1}}{3^n-2}\le1,
\]
where the last inequality is $3^n\ge2^{n-1}+2$. Since $M_n<3^n$, taking the floor
and the remainder gives the asserted formula. Because its denominator is positive for
$n\ge1$, its subtractions can be read as truncated subtraction. The remainder can be
written as $a\bmod m=a-m\lfloor a/m\rfloor$. The expression is therefore an arithmetic
term in the sense of \cite{PS25}. The stronger coefficient bound above permits base $3$.
A direct application of \cite[Theorem~2]{PS25} gives the same formula with base $8$.
\end{proof}

\section{The construction with one initial symbol}\label{sec:four}

\subsection{The question}
We now ask for a recurrent double sequence over a small alphabet that realises
the construction with one initial symbol from Section~\ref{sec:constructions}:
\[
s_0=\emptyset,\qquad
s_{n+1}=(s_n\,s_{n-1}\cdots s_0\mid s_0\cdots s_{n-1}\,s_n).
\]
We seek white-bordered rectangles $S_n$ along the diagonal. The rectangles directly
inside $S_n$ must be translates of $S_{n-1},\dots,S_0,S_0,\dots,S_{n-1}$, in that order.
Here a rectangle is inside another when its closed region lies in the interior
of the enclosing rectangle.
As in Section~\ref{sec:coords}, we call these rectangles ordinals. For this construction
we impose one further condition: each row strictly between the opening and closing
edges must contain a non-white cell strictly between the walls. This condition excludes
rectangles drawn inside a white region and the union of two rectangles that share an edge.

\subsection{Three letters}
\begin{thm}\label{thm:three-none}
Let $g:\F^3\to\F$ be symmetric in its first and third argument and let $b\in\F$. The
recurrent double sequence with rule $g$ and boundary values $t(x,0)=t(0,y)=b$
cannot satisfy all of the following conditions.
\begin{romenum}
\item Along the diagonal, three consecutive ordinals contained in no other ordinal
are $S_0$, $S_1$ and $S_2$. There is no ordinal strictly inside $S_0$. The ordinals directly
inside $S_1$ are two copies of $S_0$, and those directly inside $S_2$ are copies of
$S_1,S_0,S_0,S_1$, in that order.
\item All three ordinals lie in the square $x,y<192$ and have half-width at most $40$.
\item The ordinal $S_1$ lies in the square $x,y<40$ and has half-width at most $9$.
\item No colour occupies more than $97$ percent of the cells in the square $x,y<40$.
\end{romenum}
\end{thm}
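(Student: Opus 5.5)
The statement is a finite, computer-checkable assertion, so the plan is an exhaustive search made feasible by pruning. There are $3^{18}\cdot 3$ candidate pairs $(g,b)$: a symmetric $g$ is determined by its values on the $18$ orbits of $\F^3$ under $(x,y,z)\mapsto(z,y,x)$. That is about $10^9$ pairs, too many to generate full $192\times192$ pictures blindly. Instead I would do a depth-first search over partial rules, in the spirit of the proof of Theorem~\ref{thm:triples}(b). Compute the sequence anti-diagonal by anti-diagonal. Whenever a triple whose value is not yet assigned occurs, branch on its three possible values. A branch is abandoned as soon as the partially computed window violates a condition that is already decidable on the anti-diagonals computed so far.

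The pruning tests, in the order in which they become decidable, are as follows. First, the density condition (iv) can be checked incrementally on the square $x,y<40$. Any colour whose count already exceeds $0.97\cdot1600$ kills the branch, and this disposes early of the degenerate rules that produce nearly constant pictures. Second, condition (iii) forces $S_1$ to be completed within $x,y<40$ with half-width at most $9$. Once the anti-diagonal $s=80$ is computed, I would enumerate all white-bordered rectangles centred on the diagonal, with the same colour in the role of white as in the definition of ordinals. Here ``white'' ranges over the three colours, by relabelling. Among these rectangles I would look for an $S_1$ containing exactly two copies of a childless $S_0$. The extra nonwhite-row condition of the question section is imposed here. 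Since $S_0$ must be a translate inside $S_1$, its half-width and length are then fixed, which drastically restricts the remaining branches. Third, the search continues up to $s<384$ and checks condition (i): $S_0,S_1,S_2$ must be consecutive maximal ordinals along the diagonal, and the ordered children of $S_2$ must be $S_1,S_0,S_0,S_1$. Each check uses the half-width bound $40$ from (ii).

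Symmetry of $g$ makes every generated picture symmetric, so only $d\ge0$ needs computing, which halves the work. Symmetry under permutations of the colours cannot be used directly, because white is singled out. Still, enumerating the choice of the border colour and of $b$ up front keeps the cases explicit. The main obstacle is making the search terminate in reasonable time while remaining a rigorous certificate. Partial rules that leave some triples unassigned yet survive past $s=80$ could make the tree explode. To handle this, I would order branching by first occurrence and record, for every closed leaf, the violated condition and the anti-diagonal at which it was detected. The proof then consists of the statement that every leaf of the finite search tree fails one of (i)--(iv), as verified by computer in Section~\ref{sec:computer}.
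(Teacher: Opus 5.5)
Your outline is the paper's: an exhaustive search over symmetric rules, with a test on the square $x,y<40$ followed by one on $x,y<192$. Running it as a depth-first search over partial rules is a legitimate variant. The gap is in your last stage. A search certificate proves the theorem only if every test applied at a leaf is a \emph{consequence} of (i)--(iv), and part of (i) is not a property of any finite window. The requirement that $S_0,S_1,S_2$ are contained in no other ordinal, and are consecutive among such ordinals, refers to ordinals that may leave the square $x,y<192$. You say you will ``check condition (i)'' on the computed picture, but you do not say which finite condition you test or why (i) implies it. Window-maximality is harmless, because a fully visible rectangle with white border and a non-white cell in each interior row is a genuine ordinal. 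A condition such as ``no other window-maximal rectangle lies between $S_1$ and $S_2$'' is different: it follows from global consecutiveness only if you control how invisible ordinals can sit relative to visible ones. As written, a leaf rejected for this reason has not been shown to violate (i).

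There are two ways to close this. The paper weakens the test so that it depends only on the contents of $S_1$ and $S_2$. It asks for a rectangle of half-width at most $9$ in $x,y<40$ with exactly two descendants, both leaves. It also asks for a rectangle of half-width at most $40$ in $x,y<192$ whose containment-maximal inner rectangles are four, of levels $1,0,0,1$ in order, with the outer two such nested pairs. It requires neither maximality, nor consecutiveness, nor matching dimensions of the copies. Alternatively, you can prove the missing implication from the interior-row condition. Two ordinals are then nested or their time spans share at most an edge row. Otherwise an edge row of one would lie strictly inside the span of another of no larger half-width, and it would be an interior row of the latter that is white between its walls. It follows that a window-maximal rectangle opening between $S_1$ and $S_2$ would lie in a globally maximal ordinal opening strictly between them, which contradicts consecutiveness. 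With either fix, your test accepts only configurations that the paper's test accepts, so the paper's negative computational outcome carries over. Two smaller points. Colour permutations can be used after all: if $g(b,b,b)=b$ the picture is constant and fails (iv), and otherwise you may conjugate to $b=0$, $g(0,0,0)=1$ and test each border colour separately, which cuts your search by a factor of $9$. Also, the density test removes nothing beyond the constant rules, so all real pruning happens at $s=80$.
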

\begin{proof}
We use a finite condition that depends only on the contents of $S_2$. If (i)--(iii)
hold, the small square contains a nested pair, and the large square contains a box
whose direct children have levels $1,0,0,1$, each level-$1$ child having two leaf
children. The test allows this box to have an enclosing ordinal. It also allows
other boxes between the stipulated main boxes. Thus it makes no inference about
global maximality or global consecutiveness from the finite picture.

For the enumeration, write a rectangle as $(e,w,f)$, where $e$ and $f$ are its
opening and closing times. Its border and its interior rows are entirely visible
in the square $x,y<N$ if
\[
1\le w\le W,\qquad w\le e<f,\qquad e\equiv f\equiv w\pmod2,
\qquad f+w\le2N-2.
\]
An ordinal inside such a rectangle has smaller half-width and lies in the same
square, so every descendant needed to test its children is visible as well.
In coordinates, $(e,w,f)$ is inside $(e',w',f')$ if
$e'<e$, $f<f'$ and $w<w'$.

For each possible pair $(e,w)$, the detector checks the opening edge and then
extends the walls in steps of two rows. Each extension checks the newly enclosed
rows for a non-border-coloured cell. It stops at a broken wall, an entirely
border-coloured interior row, the boundary of the square, or the first closing
edge. The last stopping rule is valid because that closing edge would be an
entirely border-coloured interior row of any longer candidate with the same
$(e,w)$. Thus there is at most one ordinal for each opening and half-width.
For fixed $w$ there are at most $N-1-w$ possible openings, so the storage bounds are
\[
\sum_{w=1}^{9}(39-w)=306,\qquad
\sum_{w=1}^{40}(191-w)=6820.
\]
The program allocates these bounds and aborts if either is exceeded.

The test of descendants does not assign each rectangle an arbitrary parent.
For each candidate $B$, it lists all rectangles strictly inside $B$. A leaf has
none. A level-$1$ candidate has two descendants, both leaves. For a level-$2$
candidate, take the containment-maximal rectangles inside $B$ and require four
of them in the order $1,0,0,1$. The outer two must be level-$1$ candidates.
There can be at most eight proper descendants in this configuration. Checking
all candidates with four to eight descendants therefore includes the required
$S_2$, even if some containment relations elsewhere do not form a tree.
The test does not require copies to have matching dimensions or colours in their
interiors, so it accepts every configuration stipulated by (i).

There are $3^{18}$ symmetric rules. If $g(b,b,b)=b$, the whole sequence is
constant and fails (iv). Otherwise there is a unique permutation of the three
colours taking $b$ to $0$ and $g(b,b,b)$ to $1$. Conjugating $g$ by this
permutation preserves symmetry, containment and the density condition.
It is therefore enough to enumerate the $3^{17}=129\,140\,163$ symmetric rules
with boundary $0$ and $g(0,0,0)=1$, testing all three possible border colours.

The exhaustive run tested all these rules. All passed the density condition,
and $649\,981$ pairs of a rule and a border colour passed the small-window nesting test.
For each surviving pair, all ordinals of half-width at most $40$ in the large
square were enumerated. None contained a level-$2$ candidate. The largest
rectangle counts were $170$ in the small square and $3377$ in the large square,
below the proved storage bounds.
As a positive control, the same detector accepts the level-$2$ box of the
four-letter sequence at times $55$ to $105$, with its eight descendants.
Section~\ref{sec:computer} records the independent checks of the detector.
\end{proof}

Theorem~\ref{thm:three-none} rules out one geometric form within a finite window and for
symmetric rules with constant boundary values. It does not rule out sequences over three
letters with a non-symmetric rule, with periodic boundary values, with ordinals that first
appear beyond the window, or with examples excluded by the density or early-nesting conditions.

\subsection{Four letters: the rule}
Let $A=\{0,1,2,3\}$, let $g:A^3\to A$ be the function of Table~\ref{tab:g}, which is symmetric
in its first and third argument, and let $\tau:\N^2\to A$ be the recurrent double sequence with
$\tau(x,0)=\tau(0,y)=1$ and $\tau(x,y)=g(\tau(x-1,y),\tau(x-1,y-1),\tau(x,y-1))$. The colours are
$0$ white, $1$ red, $2$ blue and $3$ green. Figure~\ref{fig:four} shows the first
$192\times192$ cells in colour and in monochrome.

\begin{table}[ht]
\centering
\begin{tabular}{c|cccc|cccc|cccc|cccc}
\toprule
 & \multicolumn{4}{c|}{$x=0$} & \multicolumn{4}{c|}{$x=1$} & \multicolumn{4}{c|}{$x=2$} & \multicolumn{4}{c}{$x=3$}\\
$z=$ & 0&1&2&3 & 0&1&2&3 & 0&1&2&3 & 0&1&2&3\\
\midrule
$y=0$ & 1&0&2&3 & 0&1&0&1 & 2&0&0&0 & 3&1&0&2\\
$y=1$ & 0&2&2&3 & 2&0&1&0 & 2&1&2&0 & 3&0&0&0\\
$y=2$ & 1&2&3&1 & 2&3&0&0 & 3&0&0&3 & 1&0&3&3\\
$y=3$ & 1&3&0&2 & 3&0&1&0 & 0&1&1&2 & 2&0&2&0\\
\bottomrule
\end{tabular}
\caption{The values $g(x,y,z)$ of the rule over four letters. The eight triples
$(1,1,3)$, $(1,2,2)$, $(1,2,3)$, $(1,3,1)$, $(2,2,1)$, $(3,1,1)$, $(3,1,3)$, $(3,2,1)$ do not occur as
arguments of $g$ in the computation of $\tau$ (Corollary~\ref{cor:four-triples}), so $g$ may be
changed there without changing $\tau$.}
\label{tab:g}
\end{table}

\begin{figure}[ht]
\centering
\includegraphics[width=0.48\textwidth]{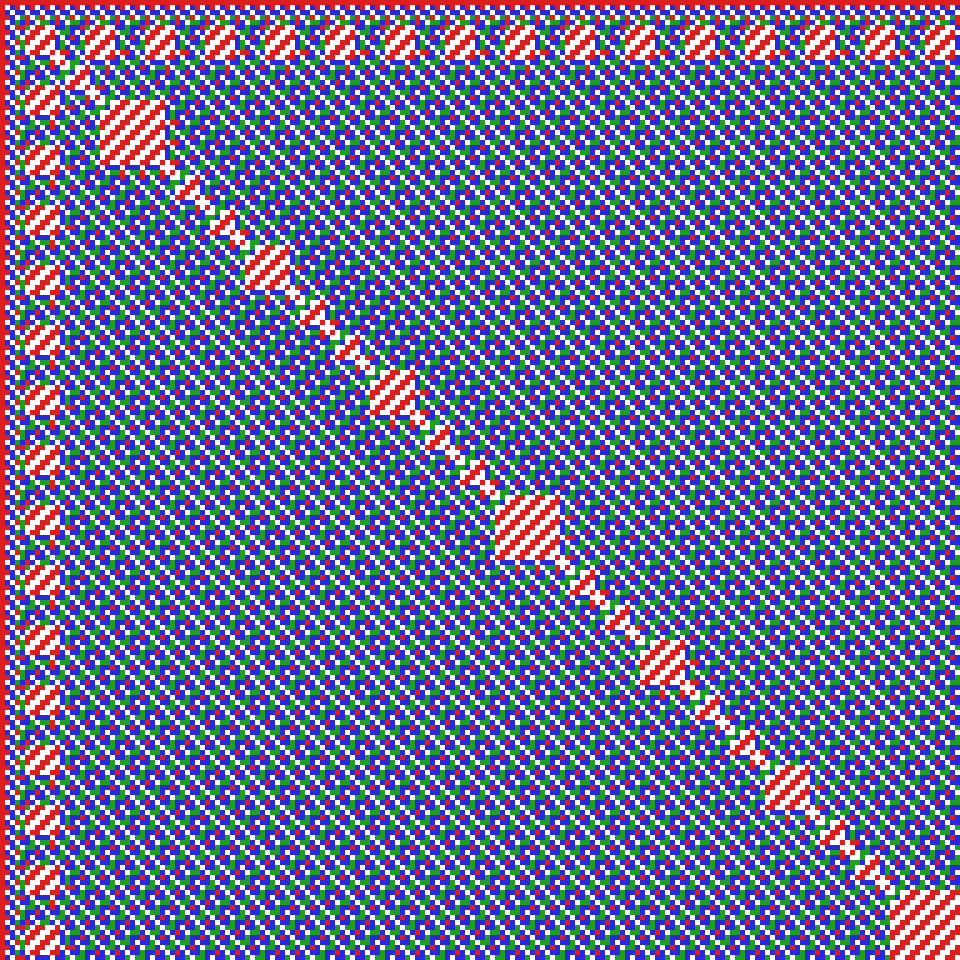}\hfill
\includegraphics[width=0.48\textwidth]{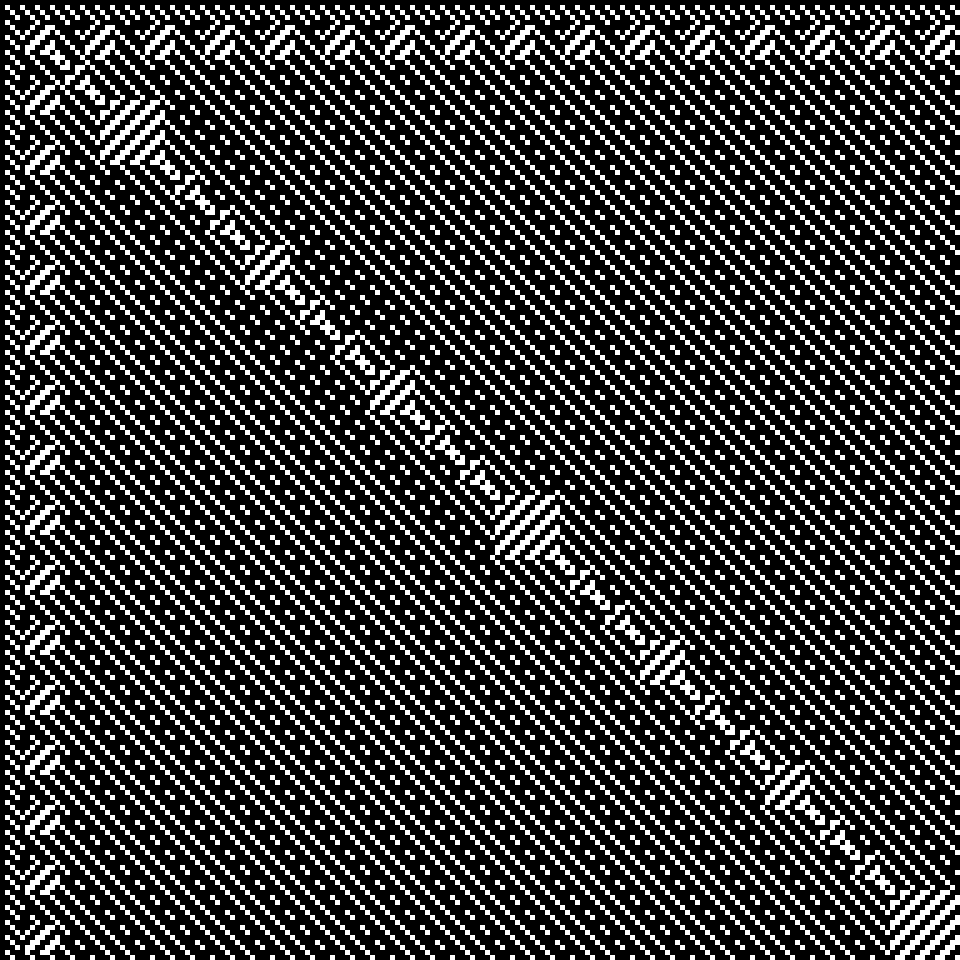}
\caption{The cells $(x,y)$ with $0\le x,y<192$ of the four-letter sequence $\tau$.
Left: white $0$, red $1$, blue $2$ and green $3$. Right: the same cells, with every non-white
colour drawn in black. Along the diagonal the complete main boxes have levels
$1,1,2,1,0,0,1,2,3$. The opening of the next box, of level $4$, enters at the bottom right.
The ordinary boxes have closed white borders. The dashed walls of the virtual regions
remain interrupted in the monochrome view, as described in Propositions~\ref{prop:triple}
and~\ref{prop:diagonal}.}
\label{fig:four}
\end{figure}

\subsection{Box regions and their children}\label{sec:four-boxes}
We use the rotated coordinates of Section~\ref{sec:rot}. For $n\ge0$ put
\[
v_n=4n+1,\qquad \ell_n=6\cdot3^n-4,
\]
so that $\ell_0=2$, $\ell_1=14$, $\ell_2=50$, $\ell_{n+1}=3\ell_n+8$ and
$\ell_n\equiv2\pmod6$. We first specify the regions and their positions. Their ordinary and
virtual colourings are defined in Definition~\ref{def:four-virtual}.

\begin{defn}[box region]\label{def:four-region}
For $n\ge0$ and odd $e\ge v_n$, the \emph{box region of level $n$ opening at $e$} is
\[
R_n(e)=\{\cl sd:e\le s\le e+\ell_n,\ |d|\le v_n\}.
\]
Its opening and closing edges are the rows $s=e$ and $s=e+\ell_n$ within the region.
Its walls are the columns $d=\pm v_n$, and its interior consists of the cells with
$e<s<e+\ell_n$ and $|d|<v_n$. Its half-width is $v_n$ and its vertex is
$((e-v_n)/2,(e+v_n)/2)$. This definition specifies a set of cells, before any colouring is assigned.
\end{defn}

\begin{defn}[child regions]\label{def:four-children}
A region of level $0$ has no children. For $n\ge1$, the children of $R_n(e)$ are
\[
R_k(e+a_{n,k}),\quad R_k(e+b_{n,k})\qquad(0\le k<n),
\]
where
\[
a_{n,k}=3^{n+1}-3^{k+2}+4,\qquad
b_{n,k}=\ell_n-\ell_k-a_{n,k}=3^{n+1}+3^{k+1}-4.
\]
In time order their levels are $n-1,n-2,\dots,0,0,\dots,n-2,n-1$.
\end{defn}

The first child opens $4$ rows after its parent. Consecutive children are separated by $4$
rows, except for the two children of level $0$, which are separated by $2$ rows.
The last child closes $4$ rows before its parent. These facts follow by subtracting the
offsets in Definition~\ref{def:four-children}. The offsets in the first half are congruent
to $4$ modulo $6$, and those in the second half to $2$ modulo $6$.

The identity $\ell_n=4+3\ell_{n-1}+4$ also places three regions of level $n-1$ at the
offsets $4$, $4+\ell_{n-1}$ and $4+2\ell_{n-1}$. Adjacent regions share an edge.
Proposition~\ref{prop:triple} proves that their colourings are ordinary, virtual and ordinary,
respectively. The children of the middle region, together with the first and third regions,
are exactly the children of Definition~\ref{def:four-children}. The middle region itself
is not counted as a child. Figure~\ref{fig:four-S3} shows this assembly at level $3$.
Section~\ref{sec:four-cantor} explains how the assembly gives the stages of Cantor's
middle-thirds construction after the end margins are omitted.

\begin{figure}[ht]
\centering
\setlength{\unitlength}{\linewidth}
\begin{picture}(1,0.44)
\put(0,0){\includegraphics[width=\linewidth]{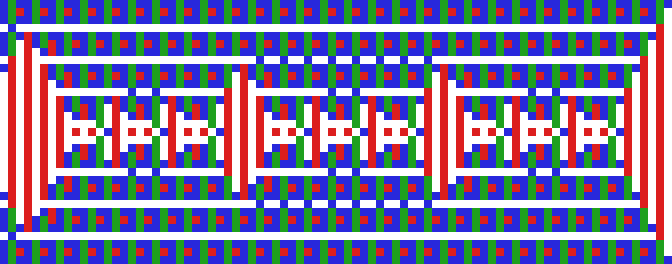}}
\put(0.202,0.415){\makebox(0,0){\small $A_1$, a box of level $2$}}
\put(0.5,0.415){\makebox(0,0){\small $A_2$, a virtual box of level $2$}}
\put(0.798,0.415){\makebox(0,0){\small $A_3$, a box of level $2$}}
\end{picture}
\caption{The main box $S_3$ of $\tau$, opening at $s=209$, in rotated coordinates. The column
index $p$ increases from left to right and the position $d$ from top to bottom. Each column
interleaves the two successive anti-diagonals $s=2p$ and $s=2p+1$: even positions $d$ take
their cells from the first, odd positions from the second. Reading a column downward thus
visits the two anti-diagonals alternately, following a staircase in the original $(x,y)$ grid.
Starting at an even position, the steps alternately increase $y$ by $1$ and decrease $x$ by $1$.
White is $0$, red $1$, blue $2$ and green $3$. Inside the
white border of $S_3$ lie two boxes of level $2$ and, between them, a virtual box of level $2$
whose walls are dashed. Each box of level $2$ contains in the same way two boxes of level $1$ and
the virtual box of level $1$, which is the middle block $\mathcal{M}$. These identifications follow
from Proposition~\ref{prop:triple}.}
\label{fig:four-S3}
\end{figure}

\begin{defn}[the family $\mathcal C$]\label{def:four-family}
For $j\ge0$ put $E_j=55+\tfrac{405}{13}(27^j-1)$, so that $E_0=55$, $E_1=865$, $E_2=22735$ and
$E_3=613225$. Write $V_j=R_{3j+3}(E_j-4)$.
The \emph{main box regions} begin with two regions of level $1$ opening at $15$ and $29$.
For each $j\ge0$, append the following regions in time order:
\begin{romenum}
\item the children of $V_j$, whose levels are $3j+2,\dots,1,0,0,1,\dots,3j+2$,
\item a region of level $3j+3$ opening at the closing time of $V_j$,
\item a region of level $3j+4$ opening $4$ rows after the region of level $3j+3$ closes.
\end{romenum}
The first region in this group has level $3j+2$ and opens at $E_j$. We call this region
the \emph{big box} of \emph{block} $j$. The main boxes of block $j$ are the regions
in the group, ending with the box of level $3j+4$.
For the purpose of the description, block $j+1$ begins one row before its big box opens.
The family $\mathcal C$ consists of the main box regions and of their descendants under
Definition~\ref{def:four-children}. Proposition~\ref{prop:diagonal} will identify the colouring
on $V_j$ as virtual, and Theorem~\ref{thm:four} will identify the regions of $\mathcal C$
as the ordinary boxes of the sequence.
\end{defn}

The box of level $3j+4$ closes $12$ rows before $E_{j+1}$, because
$E_{j+1}-E_j=2\ell_{3j+3}+\ell_{3j+4}+12=810\cdot27^j$, and the box of level $1$ opening at $29$
closes $12$ rows before $E_0$. All $E_j$ are congruent to $1$ modulo $6$. Figure~\ref{fig:four-block}
shows the beginning of block $0$.

\begin{figure}[ht]
\centering
\setlength{\unitlength}{\linewidth}
\begin{picture}(1,0.335)
\put(0,0){\includegraphics[width=\linewidth]{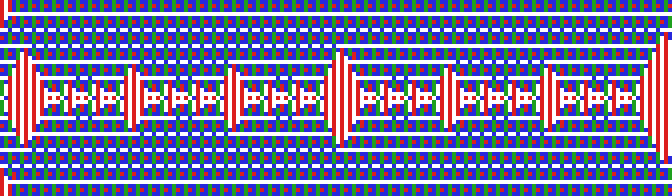}}
\put(0.0,0.31){\makebox(0,0)[l]{\small bar}}
\put(0.268,0.31){\makebox(0,0){\small $V_0$, with the children $S_2\,S_1\,S_0\,S_0\,S_1\,S_2$}}
\put(0.738,0.31){\makebox(0,0){\small $S_3$}}
\put(1.0,0.31){\makebox(0,0)[r]{\small $S_4$}}
\end{picture}
\caption{The beginning of block $0$ of $\tau$ in rotated coordinates, for $40\le s\le375$ and
$|d|\le24$. As in Figure~\ref{fig:four-S3}, each column interleaves two successive anti-diagonals,
visited alternately along a staircase in the original grid as the column is read downward.
Reading from the left, a white bar of half-width $9$ lies on row $47$.
The virtual box $V_0$ of level $3$ opens at $51$ and contains the main boxes of levels
$2,1,0,0,1,2$. The main box $S_3$ opens on the closing edge of $V_0$ at $209$,
and the main box $S_4$ opens at $371$. Proposition~\ref{prop:diagonal} identifies $V_0$ as a
virtual box. Outside $V_0$ and the main boxes lies the lattice $\Lam$.}
\label{fig:four-block}
\end{figure}

\subsection{The patterns}\label{sec:four-patterns}

\subsubsection*{The lattice $\Lam$}
$\Lam\cl sd$ depends only on $s\bmod6$ and $|d|\bmod12$ and is given by Table~\ref{tab:lam}.
The columns $|d|\equiv1,9\pmod{12}$ are entirely white, and the column $|d|\equiv5\pmod{12}$
takes the values $0,2,0$ on the rows $s\equiv1,3,5\pmod6$.

\begin{table}[ht]
\centering
\begin{tabular}{c|cccccccccccc}
$s\bmod 6$ $\backslash$ $|d|\bmod12$ & 0&1&2&3&4&5&6&7&8&9&10&11\\ \hline
0 & 3&-&2&-&2&-&2&-&2&-&3&-\\
1 & -&0&-&2&-&0&-&2&-&0&-&3\\
2 & 2&-&3&-&3&-&3&-&3&-&2&-\\
3 & -&0&-&3&-&2&-&3&-&0&-&1\\
4 & 2&-&2&-&2&-&2&-&2&-&2&-\\
5 & -&0&-&1&-&0&-&1&-&0&-&2
\end{tabular}
\caption{The value $\Lam\cl sd$ as a function of $s\bmod6$ (rows) and $|d|\bmod12$
(columns). A dash marks a combination of residues of different parity, which no cell has.}
\label{tab:lam}
\end{table}

\subsubsection*{The corridor pattern $I$}
For an integer $r$ and a column index $a\ge1$ let $I(r,a)$ be defined by the residue of $r$
modulo $6$ and the residue of $a$ modulo $4$ as follows. If $a\equiv1\pmod4$, which we call a
\emph{wall column}, $I(r,a)$ is $2,0,0$ for $r\equiv0,2,4$. If $a\equiv3\pmod4$, the
\emph{middle column} of a corridor, $I(r,a)$ is $3,1,2$ for $r\equiv0,2,4$. If $a\equiv0,2\pmod4$,
the \emph{outer columns} of a corridor, $I(r,a)$ is $2,2,3$ for $r\equiv1,3,5$. The three columns
$a=4k+2,4k+3,4k+4$ form the \emph{corridor} $k$. The wall columns are the half-widths $v_n$ of the
boxes. Inside a box, the pattern $I$, with $r$ counted from the opening edge of the box, fills
the corridors between the walls, and on a wall column it draws the dashed wall of a virtual box.

Comparing Table~\ref{tab:lam} with $I$, $\Lam\cl sd=I(s+3,|d|)$ except on the white columns
$|d|\equiv1,9\pmod{12}$ and except in the corridors $k\equiv2\pmod3$, where
$\Lam\cl sd=I(s+5,|d|)$. The region $V_j$ opens at $E_j-4\equiv3\pmod6$, so its corridor
pattern has the phase $I(s+3,\cdot)$, and outside its wall column $|d|=12j+13$ the lattice
$\Lam$ continues this pattern on the seven columns $12j+14\le|d|\le12j+20$.

\subsubsection*{Finite tables}
The description uses finite tables read from the computed sequence and printed in
Appendix~\ref{app:tables}. They specify five parts of the description:
\begin{itemize}
\item The pattern $P_0$ gives the $5$ cells of a box of level $0$, with a white border
and a red centre.
\item The pattern $P_1$ gives the $83$ cells of a box of level $1$.
\item In a box of level at least $2$, let $c_{\mathrm{mid}}$ be the closing offset of the first child
of level $1$, measured from the opening of the box. The \emph{middle block} $\mathcal{M}$ gives the $83$ cells on the rows
$c_{\mathrm{mid}},\dots,c_{\mathrm{mid}}+14$ and the columns $|d|\le5$. This block contains the two
children of level $0$. Lemma~\ref{lem:four-virtual} identifies $\mathcal{M}$ with the virtual
pattern $Q_1$.
\item The strip table gives $\tau(x,y)$ for $x\le13$. For $y\ge y_0(x)$, these
values are periodic in $y$ with period $12$, where $y_0(x)\le18$.
\item The corner table gives the values for $x,y<20$. The initial-segment table
supplies the remaining values needed for the anti-diagonals $s<56$.
\end{itemize}

\subsection{The description}\label{sec:four-desc}

\begin{defn}[the ordinary pattern $P_n$]\label{def:four-P}
The patterns $P_0$ and $P_1$ are the finite tables in Appendix~\ref{app:tables}.
For $n\ge2$, $0\le r\le\ell_n$, $|d|\le v_n$ and $v_n+r-d$ even, $P_n(r,d)$ is given by the
first applicable rule, with child offsets from Definition~\ref{def:four-children}.
\begin{romenum}
\item Border: $0$.
\item Inside a child of level $k$ opening at offset $o$: $P_k(r-o,d)$.
\item Middle block: $\mathcal{M}(r-c_{\mathrm{mid}},d)$ if $c_{\mathrm{mid}}\le r\le c_{\mathrm{mid}}+14$ and $|d|\le5$.
\item After the opening offset $o$ of a child of half-width $a$ in the first half:
$P_n(o+1,\pm(a+1))=2$ and $P_n(o+3,\pm(a+1))=1$.
\item Transition rows: for every edge of the box or of a child, at time $e'$ and
with half-width $a$, use the following values. Each value applies at cells within
the stated half-width. When two edges prescribe a cell, the rule of the wider region applies.
\[
\begin{array}{c|cccccc}
\text{row} & e'-3 & e'-2 & e'-1 & e'+1 & e'+2 & e'+3\\
\hline
\text{value} & 1 & 1 & 0 & 1 & 1 & 0\\
\text{half-width} & a-5 & a-4 & a-3 & a-1 & a-2 & a-3
\end{array}
\]
\item Otherwise $I(r,|d|)$.
\end{romenum}
\end{defn}

\begin{defn}[ordinary and virtual boxes]\label{def:four-virtual}
On the same relative cells as $P_n$, define the \emph{virtual pattern} $Q_n$ as follows.
The pattern $Q_0$ is $0$ on all five cells of $R_0(e)$. For $n\ge1$ put
\[
Q_n(r,d)=
\begin{cases}
2,& |d|=v_n,\quad 0<r<\ell_n,\quad r\equiv0\pmod6,\\
0,& |d|=v_n,\quad 0<r<\ell_n,\quad r\equiv2,4\pmod6,\\
P_n(r,d),&\text{otherwise.}
\end{cases}
\]
For a colouring $U$ of the cells, a region $R_n(e)$ is an \emph{ordinary box} if
$U\cl sd=P_n(s-e,d)$ throughout the region, and a \emph{virtual box} if
$U\cl sd=Q_n(s-e,d)$ throughout the region. Its child regions are always those of
Definition~\ref{def:four-children}. When referring to an ordinary box without specifying
its opening time, we also write $S_n$ for its pattern.
\end{defn}

\begin{lem}[properties of virtual boxes]\label{lem:four-virtual}
For $n\ge1$, $Q_n$ and $P_n$ have the same interior and the same white opening and closing
edges. Their only differences are on the walls, where $Q_n$ is blue at the offsets
$6,12,\dots,\ell_n-2$ and white at the other wall cells. Each child region of either
pattern carries the ordinary pattern of its level. Moreover, $\mathcal{M}=Q_1$.

At level $0$ the patterns differ at their centre: it is red in $P_0$ and white in $Q_0$.
No virtual box is an ordinal under the definition in Section~\ref{sec:four}.
\end{lem}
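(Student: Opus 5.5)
Most of the statement can be read off Definition~\ref{def:four-virtual}, so I will prove the parts in that order and leave the identity $\mathcal M=Q_1$ for last. For $n\ge1$, $Q_n$ agrees with $P_n$ except on the cells with $|d|=v_n$ and $0<r<\ell_n$. These are exactly the wall cells strictly between the opening and closing rows. So the interiors agree, and so do the opening and closing edges, which are white because of rule (i) of Definition~\ref{def:four-P} for $n\ge2$ and because of the table $P_1$ for $n=1$. On the walls, $P_n$ is white by rule (i). The cells of the wall column have $v_n+r-d$ even, and $v_n$ is odd, so $r$ is even there. Hence the only offsets at which $Q_n$ can differ from $P_n$ are $r\equiv0\pmod6$ with $0<r<\ell_n$. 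Since $\ell_n\equiv2\pmod6$, these are $6,12,\dots,\ell_n-2$. The claim that each child carries the ordinary pattern holds because child regions lie strictly inside, at half-width at most $v_{n-1}=v_n-4$. On them both patterns use rule (ii), so they carry $P_k$. For level $1$ the same conclusion will follow from the table.

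For level $0$, $Q_0$ is white on all five cells by definition, while the table $P_0$ has a red centre. Next I show that no virtual box is an ordinal. For $n\ge1$, the wall cell at offset $6$ is blue. Offset $6$ is defined because $\ell_n\ge14$. So the walls are not entirely white and the region is not white-bordered. For $n=0$, the single interior row is entirely white, which violates the condition imposed in Section~\ref{sec:four} that every row strictly between the edges contains a non-white cell strictly between the walls. I will also check that no smaller rectangle with a white border could be formed from a virtual wall. This follows because the dashed column alternates white, white, blue with period $6$, so any white wall segment has length at most $2$ cells in that column, which is too short for the closed-rectangle requirement $k\ge1$ with matching edges. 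If needed, I will phrase this as: the column contains a non-white cell between any two white edges of the same half-width.

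The main obstacle is $\mathcal M=Q_1$. Both are finite tables on the same $83$ cells, namely rows $c_{\mathrm{mid}},\dots,c_{\mathrm{mid}}+14$ and $|d|\le5=v_1$, with the relative row $r$ running from $0$ to $\ell_1=14$. Proving the equality is therefore a cell-by-cell comparison of the table $\mathcal M$ from Appendix~\ref{app:tables} with $P_1$ modified on the walls at offsets $6$ and $12$. The comparison itself is mechanical. What needs care is the convention: I must confirm that the rows of $\mathcal M$ are indexed from $c_{\mathrm{mid}}$ with the same parity convention as $P_1$. This holds because $c_{\mathrm{mid}}$ is the closing time of a level-$1$ child, which is odd relative to the absolute row and has the same parity as the opening of a level-$1$ box. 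Next, I will confirm that the two level-$0$ children inside $\mathcal M$ sit at the same offsets as the children of $R_1$ in Definition~\ref{def:four-children}. These offsets are $a_{1,0}=4$ and $b_{1,0}=8$, and both children are ordinary, with red centres. This last check is also what supplies the claim that children carry ordinary patterns at level $1$.

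As a consistency check, I will compute $c_{\mathrm{mid}}=a_{n,1}+\ell_1=3^{n+1}-23$. It must agree with the opening offset of the middle level-$1$ region in the tripling decomposition $\ell_n=4+3\ell_{n-1}+4$ at $n=2$, namely $4+\ell_1=18$. This is consistent with the later use of $\mathcal M$ as the virtual box of level $1$ in Proposition~\ref{prop:triple}.
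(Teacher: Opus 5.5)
Your proposal is correct and follows the paper's proof essentially step for step: the interior, edge and wall claims come from Definition~\ref{def:four-virtual} together with the parity of the wall offsets and $\ell_n\equiv2\pmod6$, the children come from rule (ii) and the table of $P_1$, $\mathcal{M}=Q_1$ is a direct comparison of the two tables (they differ exactly at $(6,\pm5)$ and $(12,\pm5)$), and the non-ordinal claim uses the blue wall cell at offset $6$ and the entirely white interior row of $Q_0$. Two side remarks are wrong and should be fixed or dropped, although neither is needed for the lemma. First, your consistency check has an arithmetic slip: $a_{n,1}+\ell_1=3^{n+1}-9$, which gives $18$ at $n=2$, whereas $3^{n+1}-23$ equals $a_{n,1}$ alone and gives $4$. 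Second, the extra claim about smaller rectangles on a dashed wall is not justified by your argument. The white wall run at offsets $0,2,4$ has three cells, and two white wall cells already suffice for a rectangle with $k=1$, so excluding such rectangles would require checking the rows themselves, as the paper does in the proof of Theorem~\ref{thm:four}(b).
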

\begin{proof}
The interior and edge statements follow from Definition~\ref{def:four-virtual}. The
walls have cells only at even offsets, and the nonzero wall entries occur at the stated
multiples of $6$. Child regions lie strictly inside the parent region, so $Q_n$ and $P_n$
agree on them. Their ordinary patterns follow from rule (ii) for $n\ge2$ and from the
table of $P_1$ for $n=1$. The tables of $\mathcal{M}$ and $P_1$ differ exactly at $(6,\pm5)$ and
$(12,\pm5)$, as prescribed by $Q_1$.

For $n\ge1$ the wall cell at offset $6$ is blue, so the border is not entirely white.
For $n=0$ the only interior row is entirely white, which the definition of an ordinal
excludes.
\end{proof}

Thus the description of a virtual box as an ordinary box with dashed walls applies at
positive levels. The white centre of $Q_0$ is the separate base case at the end of the
recursion.

\begin{defn}[the description $\Delta$]\label{def:four-D}
For a cell $c=\cl sd$ with $x=(s-d)/2$ and $y=(s+d)/2$, the value $\Delta(c)$ is given by the
first applicable rule.
\begin{Fenum}{0}
\item Corner, strips and initial segment: if $x,y<20$, or $x\le13$ and $y\ge y_0(x)$, or
$y\le13$ and $x\ge y_0(y)$, or $s<56$, the value of the corresponding table.
\item Inside a main box of level $n$ opening at $e$: $P_n(s-e,d)$.
\end{Fenum}
Otherwise let $j$ be determined by $E_j-1\le s<E_{j+1}-1$.
\begin{Fenum}{2}
\item Middle block of a main pair of boxes of level $0$: $\mathcal{M}$ as in Definition~\ref{def:four-P},
relative to the closing time of the main box of level $1$ that precedes the pair.
\item Junction: for any $k\ge1$, put $e=E_k$ and $a=v_{3k+2}$. The rows
$e-10,\dots,e-1$ have the following values at cells within the stated half-widths.
\[
\begin{array}{c|cccccccccc}
s-e & -10 & -9 & -8 & -7 & -6 & -5 & -4 & -3 & -2 & -1\\
\hline
\text{value} & 1 & 0 & 0 & 1 & 1 & 0 & 0 & 1 & 1 & 0\\
\text{half-width} & a-4 & a-3 & a & a-1 & a & a+1 & a+4 & a+3 & a+2 & a+1
\end{array}
\]
\item Transition rows around the edges of the main boxes, as in Definition~\ref{def:four-P}(v).
\item After the opening $e$ of a main box with $e\equiv1\pmod6$ and half-width $a$:
$\Delta\cl{e+1}{\pm(a+1)}=2$ and $\Delta\cl{e+3}{\pm(a+1)}=1$.
\item Corridors and dashed walls of the block: if $1\le|d|\le12j+13$, then $\Delta(c)=I(s+3,|d|)$.
\item Otherwise $\Delta(c)=\Lam(c)$.
\end{Fenum}
\end{defn}

In words, in block $j$ the rules (F2) to (F7) draw the virtual box $V_j$ with its children, the
main boxes of levels $3j+3$ and $3j+4$, and the junction before the next block, and rule (F8)
draws the lattice $\Lam$ elsewhere. Proposition~\ref{prop:diagonal} states this precisely.

\subsection{The theorem}\label{sec:four-thm}

\begin{thm}\label{thm:four}
Let $\Delta$ be the function on cells of Definition~\ref{def:four-D} and $\mathcal C$ the family
of Definition~\ref{def:four-family}.
\begin{alphenum}
\item $\tau(x,y)=\Delta\cl{x+y}{y-x}$ for all $x,y\ge0$.
\item The ordinals of $\tau$ are the boxes of $\mathcal C$. All boxes of level $n$ are
translates of each other, and the ordinals directly inside a box of level $n+1$ are its
children, which are translates of $S_n,\dots,S_0,S_0,\dots,S_n$ in this order.
\item The ordinals contained in no other ordinal are the main boxes. Along the diagonal,
in increasing time $s=x+y$, the first two main boxes have level $1$ and open at times
$15$ and $29$. The remaining main boxes occur in successive groups indexed by
$j=0,1,2,\dots$. In group $j$, the levels occur in the order
\[
\underbrace{3j+2,3j+1,\dots,1,0,0,1,\dots,3j+2}_{6j+6\text{ children of }V_j},
\quad 3j+3,\quad 3j+4.
\]
The virtual box $V_j$ has level $3j+3$ and opens at $E_j-4$, where
$E_j=55+\tfrac{405}{13}(27^j-1)$.
The main box of level $3j+3$ opens on the closing edge of $V_j$.
The main box of level $3j+4$ opens $4$ rows after the main box of level $3j+3$ closes.
The next virtual box $V_{j+1}$ opens $8$ rows after the main box of level $3j+4$ closes.
Each level occurs infinitely often among the main boxes.
\end{alphenum}
\end{thm}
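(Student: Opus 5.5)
The proof of (a) will follow the pattern of Theorem~\ref{thm:main} and Lemma~\ref{lem:three-initial-local}. Since $\tau$ has a symmetric rule and constant boundary value $1$, the analogue of Lemma~\ref{lem:unique} holds: it suffices to show that $\Delta$ has the boundary values $1$ on $|d|=s$ and is correct at every cell $\cl sd$ with $|d|\le s-2$. Rule (F1) covers the corner, the two strips and the initial segment $s<56$, and these are checked directly against the computed tables. The strip cells are periodic with period $12$ in $y$ beyond $y_0(x)\le18$, so one period plus the transient suffices there. For the remaining cells I would reduce the correctness check to a finite catalogue of windows $(\Delta(c),\Delta(\cx),\Delta(\cy),\Delta(\cz))$, in the same way as for $\rho$.

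The first ingredient is a structural statement, Proposition~\ref{prop:triple}. An ordinary region $R_n(e)$ with $n\ge2$ is described equally well as three regions of level $n-1$ at offsets $4$, $4+\ell_{n-1}$ and $4+2\ell_{n-1}$, coloured $P_{n-1},Q_{n-1},P_{n-1}$, with the outer margins given by transition rows and $I$. I would prove this by induction on $n$ from Definitions~\ref{def:four-P} and~\ref{def:four-virtual}, using $\ell_n=3\ell_{n-1}+8$ and the child offsets, together with $\mathcal M=Q_1$ from Lemma~\ref{lem:four-virtual}. The second ingredient is the analogous statement for the diagonal, Proposition~\ref{prop:diagonal}. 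Rules (F2)--(F7) on block $j$ coincide with $Q_{3j+3}$ on $V_j$, with $P_{3j+3}$ and $P_{3j+4}$ on the two main boxes, and with the junction table before $E_{j+1}$. This is a bookkeeping check using $E_{j+1}-E_j=810\cdot27^j$ and the phase relation between $\Lam$ and $I(s+3,\cdot)$ noted after the definition of $I$.

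With these in hand, each window falls into one of the following contexts:
\begin{itemize}
\item inside a single box, near an edge, a wall, a child junction or the middle block;
\item on a dashed wall of a virtual box;
\item at the junction rows of block $j$;
\item in the lattice $\Lam$ or a corridor.
\end{itemize}
Pure lattice and corridor windows are periodic: $\Lam$ has period $(6,12)$ and $I$ has period $(6,4)$, so finitely many evaluations of $g$ suffice.

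For the other contexts I would define level-lowering maps as in the proof of Lemma~\ref{lem:three-initial-local}. Shifting all involved levels by a fixed amount changes widths by multiples of $4$. Positions are taken modulo $12$ away from walls and diagonal, times modulo $6$, and offsets relative to the nearest edges are preserved. Edge separations are always $4$, or $2$ between the two level-$0$ children, so the local configuration is determined by the context, the level residue and bounded offsets. This reduces everything to levels up to a fixed bound, say $n\le 8$, and blocks $j\le2$, which are then checked by computer.

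The main obstacle is making this reduction provably complete. I must show that every window meets at most a bounded number of edges, and that the lattice phase outside a box depends only on its context: first-half versus second-half child, extreme child, or main box. For this I would use the residues of the offsets, which are $4\bmod6$ in the first half and $2\bmod6$ in the second. I must also check that the priority clause ``the wider region wins'' in rule (v) is stable under the shift.

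Parts (b) and (c) then follow as in Corollary~\ref{cor:boxes}. White cells are borders of ordinary boxes, dashed walls, white lattice columns and transition zeros. A white-bordered rectangle must have solid walls, and virtual walls are broken by the blue cells at offsets $\equiv0\pmod6$. The condition that each interior row contains a non-white cell excludes rectangles inside the white columns of $\Lam$ and unions of adjacent boxes. Hence the ordinals are exactly the ordinary boxes of $\mathcal C$, nested as in Definition~\ref{def:four-children}. The list of main boxes in (c) is read off from Definition~\ref{def:four-family}, and every level recurs because $V_j$ contains children of all levels up to $3j+2$.
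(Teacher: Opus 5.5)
Your route for (a) is the paper's: uniqueness reduces the claim to correctness of $\Delta$, and Propositions~\ref{prop:triple} and~\ref{prop:diagonal} give the recursive structure. Level-lowering maps then push every window into the bounded region of Lemma~\ref{lem:windows}, which is checked by computer. These maps go one level at a time inside boxes, and one block $j\mapsto j-1$ along the diagonal, so that the period $12$ of $\Lam$ is respected.

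The gap is in (b). The argument of Corollary~\ref{cor:boxes} works for $t$ because every white cell off a box border lies on a white line and has no white cell two rows away in its column. That forces the walls of an ordinal onto box walls. For $\tau$ this property fails badly. The columns $|d|\equiv1,9\pmod{12}$ of $\Lam$ are white on every row. The corridor wall columns of $I$ inside every box are white at $r\equiv2,4\pmod6$. The transition rows $e-1$ and $e+3$ put white cells on even columns. There are further white cells in $P_1$ and $\mathcal M$, on the bars, and in the strip tables near the axes. So your sentence ``a white-bordered rectangle must have solid walls'' is exactly what has to be proved, and neither of your two tools proves it. The interior-row condition does nothing against a candidate whose walls run along white columns of $\Lam$ or of $I$ around a stretch of the diagonal structure, since its interior rows contain non-white cells. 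The blue cells of a dashed wall first appear only at offset $6$. Consider a candidate of duration $4$ opening on the opening edge of $V_j$, or on the edge shared by a first sub-box and the virtual middle region. It has white wall cells at offsets $0,2,4$, so the dashed wall does not break it. It is excluded only because the row at offset $4$, where the first child of the virtual region opens with half-width $W-4$, is non-white at $|d|=W-2$.

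The missing idea is to control the opening row instead of the walls. The paper proves three facts:
\begin{itemize}
\item An even column never has two white cells two rows apart, so $W$ is odd.
\item On odd rows the cells at $|d|=1,3$ are white only on edge rows, meaning edges of boxes of $\mathcal C$, of the $V_j$, or bars. Beyond the widest edge on such a row, of half-width $a$, the cell at $|d|=a+2$ is non-white, so $W\le a$.
\item The row two after an edge is red on $|d|\le a-2$. So the white wall cell $\cl{s_0+2}{W}$ forces $W\ge a-1$, hence $W=a$.
\end{itemize}
A case analysis on the type of that edge then finishes. An ordinary opening yields that box. A closing followed by a wider edge, or a bar, makes $\cl{s_0+2}{W}$ red. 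A dashed wall forces $s_1=s_0+4$, which fails as above.

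Near the origin you also need Lemma~\ref{lem:four-margins}, which keeps ordinals opening at $s\ge58$ in $x,y\ge16$, away from the white cells of the strips. Earlier openings need a finite enumeration. Part (c), including the fact that the children of $V_j$ are main boxes because $V_j$ is not an ordinal, follows once (b) is established.
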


The next two propositions follow from the definitions and describe the structure behind the
theorem. They are also the main tools of its proof.

\begin{prop}[assembly of the boxes]\label{prop:triple}
Let $n\ge2$, $\ell=\ell_{n-1}$, $w=v_{n-1}$ and $v=v_n$, and let $0\le r\le\ell_n$ and $|d|\le v$.
\begin{alphenum}
\item If $4\le r\le4+\ell$ and $|d|\le w$, then $P_n(r,d)=P_{n-1}(r-4,d)$. If
$4+2\ell\le r\le4+3\ell$ and $|d|\le w$, then $P_n(r,d)=P_{n-1}(r-4-2\ell,d)$.
\item If $4+\ell\le r\le4+2\ell$ and $|d|\le w$, then
$P_n(r,d)=Q_{n-1}(r-4-\ell,d)$. In particular, the middle region is a virtual box in the
sense of Definition~\ref{def:four-virtual}. Off its walls the value is
$P_{n-1}(r-4-\ell,d)$. On the walls of the middle region, strictly between its edges,
the value is $I(r,w)$.
\item At the remaining cells, $P_n(r,d)=0$ if $r\in\{0,\ell_n\}$ or $|d|=v$. The rows $r=1,2,3$
are red on $|d|\le v-1$, red on $|d|\le v-2$ and white on $|d|\le v-3$, respectively.
Before the closing edge, the rows
$r=\ell_n-1,\ell_n-2,\ell_n-3$ are white on $|d|\le v-3$, red on $|d|\le v-4$ and red on
$|d|\le v-5$, respectively. The two exceptional values are
\[
P_n(5,\pm(w+1))=2,\qquad P_n(7,\pm(w+1))=1.
\]
At all other remaining cells, $P_n(r,d)=I(r,|d|)$.
\end{alphenum}
The decomposition into three patterns also holds for $n=1$: the regions at offsets $4$, $6$ and $8$
carry $P_0$, $Q_0$ and $P_0$, respectively. The same three constituent patterns occur
inside $Q_n$ for every $n\ge1$.
\end{prop}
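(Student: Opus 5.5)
The plan is to unfold Definition~\ref{def:four-P} for $P_n$ and compare it rule by rule with the definitions of $P_{n-1}$ and $Q_{n-1}$ placed at the offsets $4$, $4+\ell$ and $4+2\ell$. We begin with the offset arithmetic. From Definition~\ref{def:four-children} we get $a_{n,n-1}=3^{n+1}-3^{n+1}+4=4$, so the first child of level $n-1$ opens at $4$. Likewise $b_{n,n-1}=2\cdot3^{n+1}-4=4+2\ell$, since $\ell=6\cdot3^{n-1}-4$, so the last child of level $n-1$ opens at $4+2\ell$.

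For $k<n-1$ we compare offsets. We have $a_{n,k}-(4+\ell)=3^{n+1}-3^{k+2}-2\cdot3^n+4=3^n-3^{k+2}+4=a_{n-1,k}$. We also have $b_{n,k}-(4+\ell)=3^n+3^{k+1}-4=b_{n-1,k}$. Hence the remaining children of $R_n$ are exactly the children of the middle region $R_{n-1}(4+\ell)$, with the same relative offsets. Similarly $c_{\mathrm{mid}}$ for $P_n$ equals $4+\ell$ plus $c_{\mathrm{mid}}$ for $P_{n-1}$ when $n\ge3$. For $n=2$ the middle region is the level-$1$ region at offset $18$, and its middle block begins at $c_{\mathrm{mid}}=4+14=18$. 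In that case (b) is exactly the statement $\mathcal M=Q_1$ of Lemma~\ref{lem:four-virtual}.

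Part (a) then follows from rule (ii) of $P_n$. Part (b) follows by going through the rules of $P_n$ at cells of the middle region one at a time.
\begin{itemize}
\item Rule (i) of $P_n$ does not fire inside the middle region, because the region's cells have $|d|\le w<v$ and lie strictly between the edges of $P_n$. The middle region's own edges at $r=4+\ell$ and $r=4+2\ell$ are the closing and opening edges of the first and last children, so they are white. This matches the edges of $Q_{n-1}$.
\item Rules (ii) and (iii) translate to the same rules of $P_{n-1}$, by the offset identities above.
\item For rule (iv) we use the first-half offsets: an offset of $P_n$ is $o=4+\ell+o'$ with $o'$ a first-half offset of $P_{n-1}$, and the rule transfers with it.
\item For rule (v), the edges of children of $P_n$ inside the middle region are the edges of children of $P_{n-1}$. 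The edges of the middle region itself carry half-width $w$, but in $P_n$ they are edges of the first and last children. Their transition rows therefore reach only inward, and they coincide with the transition rows of the box's own edges in $P_{n-1}$.
\item Rule (vi) uses $I(r,|d|)$ in $P_n$ and $I(r-4-\ell,|d|)$ in $P_{n-1}$. These agree because $4+\ell=6\cdot3^{n-1}\equiv0\pmod6$.
\item On the walls $|d|=w$ strictly between the edges, $P_n$ gives $I(r,w)$ since $w\equiv1\pmod4$. This equals $2$ for relative offset $\equiv0$ and $0$ for relative offset $\equiv2,4\pmod6$, which is precisely $Q_{n-1}$.
\end{itemize}

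Part (c) is rules (i), (v) and (iv) applied to the box's own edges and to the first child, whose opening is at $4$ with half-width $w$. Rule (v) gives rows $1,2,3$ red, red, white with half-widths $v-1,v-2,v-3$. Symmetric values occur before the closing edge, and rule (iv) gives the two exceptional cells $(5,\pm(w+1))$ and $(7,\pm(w+1))$. All other remaining cells fall to rule (vi).

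The main obstacle is the priority clause in rule (v), that the wider region wins. Transition rows of the outer edges of $P_n$ have half-widths up to $v-1=w+3$, so they extend past the walls of the first and last children. These must not override child or middle cells. This follows because rule (ii) precedes rule (v), and for $n\ge3$ rule (iii) does as well. We must also check that transition rows of the middle-region edges of half-width $w$ do not spill outside, into the corridor between $|d|=w$ and $|d|=v$. Here we check that these rows are counted as edges of children, at half-width $w$ minus $1$ to $5$, so they stay within $|d|<w$.

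The case $n=1$ and the statement about $Q_n$ are direct. The $n=1$ decomposition is read off the table of $P_1$ at offsets $4,6,8$. The statement about $Q_n$ holds because $Q_n$ differs from $P_n$ only on the walls $|d|=v_n$, which lie outside all three subregions.
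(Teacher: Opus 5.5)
Your proposal is correct and follows essentially the same route as the paper. Like the paper, you identify the children of $P_n$ as the first child at offset $4$, the children of $P_{n-1}$ shifted by $4+\ell$, and the last child at $4+2\ell$; compare rules (i)--(vi) of $P_n$ and $P_{n-1}$ on the middle region using $4+\ell\equiv0\pmod 6$ (with $\mathcal M=Q_1$ for $n=2$); read off (c) from rules (i), (iv), (v), (vi); and settle $n=1$ and $Q_n$ directly.

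Two small repairs are needed. First, $b_{n,n-1}=3^{n+1}+3^n-4=4\cdot3^n-4$, not $2\cdot3^{n+1}-4$, although your conclusion $b_{n,n-1}=4+2\ell$ is right. Second, on the walls $|d|=w$ of the middle region you should also state, as the paper does, two facts: rule (iv) only touches even columns, and for $n\ge3$ the middle block lies in $|d|\le5<w$.
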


\begin{proof}
By the definition of the children and $\ell_n=3\ell+8$, the children of $P_n$ are, in time
order, a box of level $n-1$ at the offset $4$, the children of $P_{n-1}$ shifted by $4+\ell$, and
a box of level $n-1$ at the offset $4+2\ell$. Part (a) is rule (ii).

For (b) we compare the rules
of Definition~\ref{def:four-P} for $P_n$ at $(r,d)$ and for $P_{n-1}$ at $(r-4-\ell,d)$. Rule
(ii) concerns the same children. For $n\ge3$, the middle block of $P_n$ is the
middle block of $P_{n-1}$ shifted by $4+\ell$. For $n=2$, the middle block of $P_2$
occupies the rows $18,\dots,32$ and the columns $|d|\le5$, which form the region of (b).
Rule (iv) concerns the same first-half children. The edges seen by
rule (v) are those of the same children and two edges of half-width $w$ at the offsets $4+\ell$
and $4+2\ell$, which for $P_{n-1}$ are its own edges. Rule (vi) gives the same value because
$4+\ell\equiv0\pmod6$.

For $n\ge3$, consider the walls $|d|=w$ strictly between these two edges.
No child reaches these columns, and the middle block lies in $|d|\le5<w$.
Rule (iv) concerns even columns, while the transition rows of (v) reach only
$|d|\le w-1$. Thus rule (vi) gives $I(r,w)$ on the walls of the middle region,
where $P_{n-1}$ has its ordinary wall. Since $4+\ell\equiv0\pmod6$, these wall values are exactly those of
$Q_{n-1}(r-4-\ell,d)$, and the shared edges are white. For $n=2$ the middle region carries
$\mathcal{M}=Q_1$ by Lemma~\ref{lem:four-virtual}.

Part (c) follows in the same way from the rules
(i), (iv), (v) and (vi), since the
transition rows of the edges of the children, of half-width at most $w$, do not reach the
columns $|d|>w$.

For $n=1$ the restrictions of $P_1$ to the rows $4,\dots,6$ and $8,\dots,10$, with
$|d|\le1$, are $P_0$, while all five cells of the region on rows $6,\dots,8$ are white,
giving $Q_0$. Finally, all three constituent regions lie strictly inside $R_n(e)$,
where $P_n=Q_n$ for $n\ge1$, so the assertion for $Q_n$ follows as well.
\end{proof}

\begin{prop}[the diagonal]\label{prop:diagonal}
Let $j\ge0$ and let $V_j=R_{3j+3}(E_j-4)$ be the region of
Definition~\ref{def:four-family}.
\begin{alphenum}
\item On the closed region of $V_j$, $\Delta\cl sd=Q_{3j+3}(s-E_j+4,d)$, so $V_j$ is a
virtual box for $\Delta$. Equivalently, $\Delta\cl sd=P_{3j+3}(s-E_j+4,d)$ except on the walls
$|d|=12j+13$ strictly between the two edges of $V_j$, where $\Delta\cl sd=I(s+3,12j+13)$. This is
$2$ for $s\equiv3$ and $0$ for $s\equiv1,5\pmod6$, so the walls of $V_j$ are dashed.
\item The main box of level $3j+4$ closes at $E_{j+1}-12$, and the big box of
block $j+1$ opens at $E_{j+1}$. The eleven intervening rows contain a white bar of
half-width $v_{3j+5}$ at time $E_{j+1}-8$ and the opening edge of $V_{j+1}$,
of half-width $v_{3j+6}$, at time $E_{j+1}-4$.
The other rows in this interval are transition rows of the edges at times
$E_{j+1}-12$, $E_{j+1}-8$, $E_{j+1}-4$ and $E_{j+1}$. Where transition rules overlap,
the rule for the edge with greatest half-width applies.
\item The cells of block $j$ that lie neither in $V_j$, nor in a main box, nor on the rows of (b)
are given by the transition rows around the edges of the main boxes of levels $3j+3$ and
$3j+4$, and elsewhere by $\Lam$.
\end{alphenum}
\end{prop}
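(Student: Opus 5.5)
The plan is to unwind Definition~\ref{def:four-D} block by block. All three parts are statements about which rule of $\Delta$ applies at which cell, so no evaluation of $g$ is needed. The preliminary step is to fix the time layout of block $j$. Using $\ell_{n+1}=3\ell_n+8$, the definition of $E_j$, and the identity $E_{j+1}-E_j=2\ell_{3j+3}+\ell_{3j+4}+12$, I place the events in time order:
\begin{itemize}
\item $V_j$ opens at $E_j-4$ and closes at $E_j-4+\ell_{3j+3}$;
\item the main box of level $3j+3$ occupies $[E_j-4+\ell_{3j+3},\,E_j-4+2\ell_{3j+3}]$;
\item the main box of level $3j+4$ opens $4$ rows later and closes at $E_{j+1}-12$.
\end{itemize}
Since $E_j\equiv1\pmod 6$, all these times have known residues modulo $6$. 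I also check that none of these cells falls under rule (F0) for $j\ge0$ once $s\ge56$. The exceptions, notably block $0$ near $s<56$, have to be confirmed against the initial-segment table.

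For (a), I compare $\Delta$ with the rules of Definition~\ref{def:four-P} applied to $P_{3j+3}(s-E_j+4,\cdot)$, going through the cells of $V_j$ in the following order.
\begin{itemize}
\item \emph{Children.} The children of $V_j$ are, by Definition~\ref{def:four-family}(i), main boxes. Rule (F1) then gives $P_k$ on them, which is rule (ii) of $P_{3j+3}$.
\item \emph{Middle block.} The two main boxes of level $0$ are handled by (F2), which reproduces rule (iii).
\item \emph{Transition rows.} Rule (F4) matches rule (v), since the edges are the same: the edges of the children of $V_j$ and the two edges of $V_j$ itself. The first of these is the junction row $E_j-4$ described in (F3), and the second is the shared edge with the main box of level $3j+3$.
\item \emph{Exceptional cells.} Rule (F5) matches rule (iv). The congruence $E_j\equiv1$ together with the first-half offsets $\equiv4\pmod 6$ shows that exactly the first-half openings are $\equiv1\pmod6$.
\item \emph{Remaining cells.} Rule (F6) gives $I(s+3,|d|)$. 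Since $V_j$ opens at $E_j-4\equiv3\pmod 6$, this equals $I(r,|d|)$ with $r=s-E_j+4$, which is rule (vi).
\end{itemize}
On the wall $|d|=12j+13=v_{3j+3}$, rule (F6) applies rather than a border rule. A wall column has residue $1$ modulo $4$, so $I$ gives the values $2,0,0$, and these are exactly the values of $Q_{3j+3}$ there. This also yields the stated residues $s\equiv3$ versus $s\equiv1,5\pmod6$. The edges of $V_j$ are white: the opening comes from (F3) at $s-E_j=-4$ (half-width $a+4=v_{3j+3}$, but with $k=j$, which requires care at $j=0$), and the closing is the opening edge of the next main box, given by (F1).

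For (b), I simply read off the table in (F3) with $k=j+1$ and $a=v_{3j+5}$. The white entries at $s-e=-8$ (half-width $a$) and $s-e=-4$ (half-width $a+4=v_{3j+6}$) are the bar and the opening of $V_{j+1}$. The closing at $E_{j+1}-12$ comes from the layout above. The remaining rows of the table should coincide with the transition pattern of Definition~\ref{def:four-P}(v) for edges at $E_{j+1}-12,-8,-4,0$, with the widest edge taking priority. I verify this entry by entry against the offsets $\pm1,\pm2,\pm3$ and the half-widths $a'-1,\dots,a'-5$.

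For (c), all other cells of block $j$ are excluded from (F0)–(F3) and from $V_j$, so only (F4), (F5), (F6) or (F8) can apply. The cells with $|d|\le12j+13$ outside the main boxes lie in $V_j$ or in the rows of (b), so (F6) gives nothing new. Rule (F5) applies only at openings $\equiv1\pmod6$, and the openings of the boxes of levels $3j+3$ and $3j+4$ are not of that residue. Hence only (F4) and (F8) remain, as claimed. The main obstacle is the bookkeeping in (a): showing that rule priorities agree, in particular that (F1)–(F5) never fire on a cell of $V_j$ where $P$ uses a different rule. The junction at $j=0$ needs a separate check, because there (F3) is stated only for $k\ge1$ and block $0$ overlaps the tables of (F0).
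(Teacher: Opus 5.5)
Your plan follows the paper's proof: compare Definition~\ref{def:four-D} rule by rule with Definition~\ref{def:four-P} for $P_{3j+3}$ at $(s-E_j+4,d)$, read (b) off the junction rule with $a=v_{3j+5}$, and obtain (c) from the main-box transition rows and $\Lam$. In two places you are more careful than the paper, and correctly so:
\begin{itemize}
\item The junction rule is stated only for $k\ge1$. So for $j=0$ the opening edge of $V_0$ and the rows $52,53,54$ come from the initial-segment table, and they agree with $Q_3$.
\item You compare the junction table entry by entry with the widest-edge transition rule for the edges at $E_{j+1}-12$, $E_{j+1}-8$, $E_{j+1}-4$ and $E_{j+1}$, and this comparison does go through. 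Row $E_{j+1}-11$ is not in that table. It comes from the main-box transition rule and agrees with the bar's row $e'-3$.
\end{itemize}

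Two steps need repair.

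\emph{The diagonal $d=0$.} The corridor rule only covers $1\le|d|\le12j+13$. Your ``remaining cells'' step therefore says nothing at $d=0$, where an uncovered cell would receive $\Lam$ instead of the value of $P_{3j+3}$. The paper makes this check explicitly. Every diagonal cell of $V_j$ lies in a child, in the middle block, on a junction row, or on a transition row: the two even rows of each four-row gap between consecutive edges are transition rows that reach $d=0$. Hence (F8) never occurs in $V_j$.

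\emph{The three-row gap in (c).} You assert that the cells with $|d|\le12j+13$ outside the main boxes lie in $V_j$ or on the rows of (b). This is false for the three rows between the closing of the box of level $3j+3$ and the opening of the box of level $3j+4$. The conclusion still holds. Put $v=12j+13$ and apply the wider-edge priority: these rows are red on $|d|\le v-1$, red on $|d|\le v$ and white on $|d|\le v+1$. This covers every cell with $|d|\le v$, so the corridor rule never fires there.

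A similar boundary effect, which the paper also passes over, occurs at the wall cell $\cl{E_j-2}{\pm(12j+13)}$. For $j\ge1$ this cell lies in block $j-1$, so it is given by $\Lam$ rather than by the corridor rule. $\Lam$ gives $0$ there, as $Q_{3j+3}$ requires.

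Finally, your labels (F0)--(F6) correspond to the paper's (F1)--(F7), while your (F8) is the paper's (F8). Make them consistent.
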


\begin{proof}
(a) The children of $V_j$ are the main boxes of levels $3j+2,\dots,0,0,\dots,3j+2$ by
Definition~\ref{def:four-family}, and on their closed regions rule (F2) and rule (ii) give the same
patterns. For the other cells of $V_j$ we compare the rules of Definition~\ref{def:four-D} with
those of Definition~\ref{def:four-P} for $P_{3j+3}$ at $(s-E_j+4,d)$. Rule (F3) is rule (iii),
since the main box of level $1$ before the main pair of level $0$ is the first-half child of level
$1$ of $V_j$. Rule (F6) is rule (iv), since the main boxes opening at times $\equiv1\pmod6$ inside
$V_j$ are its first-half children. The rows $E_j-4,\dots,E_j-1$ of rule (F4) are the opening edge
of $V_j$, white on $|d|\le12j+13$, and the rows red on $|d|\le12j+12$, red on $|d|\le12j+11$ and
white on $|d|\le12j+10$, which are the transition rows of that edge in rule (v). Rule (F5) gives the
transition rows of the children and of the closing edge of $V_j$, which is the opening edge of the
main box of level $3j+3$. Rule (F7) is rule (vi), since $s-E_j+4\equiv s+3\pmod6$, and at $d=0$
the cells of $V_j$ are covered by the earlier rules, so that (F8) does not occur in $V_j$. On the
walls strictly between the edges, rule (F7) gives $I(s+3,12j+13)$, where $P_{3j+3}$ has its wall.
Part (b) is rule (F4) with the half-width $a=v_{3j+5}$ of the big box of block $j+1$, and (c) is
rules (F5) and (F8).
\end{proof}

Theorem~\ref{thm:four}(a) transfers these identifications from $\Delta$ to the recurrent
sequence $\tau$. Thus the virtual boxes inside ordinary boxes are identified by
Proposition~\ref{prop:triple}, and the virtual boxes $V_j$ along the diagonal by
Proposition~\ref{prop:diagonal}.

\begin{cor}[the mirrored ordinals with one initial symbol]\label{cor:four-ordinals}
For each main box $B$ of level $n$, the ordered rooted tree of ordinary boxes contained
in $B$, including $B$ itself, is isomorphic to the occurrence tree of $s_n$. Each child
corresponds to a particular occurrence in the defining list, with time determining the
order of children. Ignoring the order and the
repetition, the children of a box of level $n$ have exactly the levels $0,\dots,n-1$, so that
\emph{$K$ is a child of $B$} becomes $k\in n$ for the von Neumann ordinals $n=\{0,\dots,n-1\}$.
\end{cor}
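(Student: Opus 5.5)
The corollary follows from Theorem~\ref{thm:four}(b) by induction on the level $n$. First fix the ordered tree attached to a box $B$ of level $n$: the root is $B$, the children of a vertex are the ordinals directly inside it, and they are ordered by opening time $s$. Theorem~\ref{thm:four}(b) says that the ordinals of $\tau$ are exactly the boxes of $\mathcal C$. It also says that the ordinals directly inside a box of level $n$ are its children from Definition~\ref{def:four-children}, and that these are translates of $S_{n-1},\dots,S_0,S_0,\dots,S_{n-1}$ in time order. Two consequences follow. The ordinary boxes contained in $B$ are exactly $B$ and its iterated children. The child relation on them is the relation ``directly inside''. The virtual middle regions of Proposition~\ref{prop:triple} are not ordinals by Lemma~\ref{lem:four-virtual}, so they add no vertices. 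Their own children are already listed among the children of $B$ in Definition~\ref{def:four-children}.

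The induction runs as follows. For $n=0$ a box of level $0$ has no children, and $s_0=\emptyset$ is an atom, so both trees are a single vertex. For $n\ge1$ the root $B$ of level $n$ has children of levels $n-1,\dots,0,0,\dots,n-1$ in time order. The word $s_n=(s_{n-1}\cdots s_0\mid s_0\cdots s_{n-1})$ has its occurrences listed in the same order of levels. Match the $i$-th child in time with the $i$-th occurrence in the list. By the induction hypothesis, applied to each child of level $k<n$, the subtree below that child is isomorphic to the occurrence tree of $s_k$. One point needs care here: the hypothesis was stated for main boxes. But Theorem~\ref{thm:four}(b) gives that all boxes of a level are translates, and in any case the argument uses only the child structure of Definition~\ref{def:four-children}. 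So I would phrase the induction for every box of $\mathcal C$ of level $k$, not only for main boxes. Gluing the subtrees below the root gives the required isomorphism of ordered rooted trees.

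For the last sentence, the multiset of child levels is $\{n-1,\dots,0\}$ with each level appearing twice. Forgetting order and repetition leaves the set $\{0,\dots,n-1\}$, which is the von Neumann ordinal $n$. So ``$K$ is a child of $B$'' for levels $k$ and $n$ becomes exactly $k\in n$.

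The only step that needs attention is the first one: that the tree of ordinals directly inside each other coincides with the child relation of $\mathcal C$. I expect no real obstacle, because this is the content of Theorem~\ref{thm:four}(b), taken as given here. The nesting of regions comes from Definition~\ref{def:four-children}: children lie strictly inside their parent and siblings are disjoint, being separated by $4$ rows, or by $2$ rows between the two level-$0$ children. This makes ``directly inside'' well defined.
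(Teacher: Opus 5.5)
Your proposal is correct and follows the paper's route. The paper's proof simply cites Theorem~\ref{thm:four}(b) together with the child list of Section~\ref{sec:four-boxes}, and your induction over all boxes of $\mathcal C$ (not only main boxes), your use of Lemma~\ref{lem:four-virtual} to exclude the virtual middle regions, and the collapse of the child levels to $\{0,\dots,n-1\}$ spell out that one-line argument.
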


\begin{proof}
By Theorem~\ref{thm:four}(b) and the description of the children in
Section~\ref{sec:four-boxes}.
\end{proof}

\begin{cor}[number of ordinary boxes]\label{cor:four-count}
An ordinary box of level $n\ge0$ contains exactly $3^n$ ordinary boxes, including itself
and all its descendants. A virtual box of level $n$ contains exactly $3^n-1$ ordinary boxes.
\end{cor}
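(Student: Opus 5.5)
The plan is a recurrence on the level. Write $N_n$ for the number of ordinary boxes in an ordinary box of level $n$, counting the box itself. Write $N'_n$ for the number of ordinary boxes in a virtual box of level $n$.

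By Definition~\ref{def:four-children}, the children of a box of level $n\ge1$ are regions of levels $n-1,\dots,0,0,\dots,n-1$. Lemma~\ref{lem:four-virtual} says every child region of $P_n$ or $Q_n$ carries the ordinary pattern of its level. By Theorem~\ref{thm:four}(b), these children are exactly the ordinary boxes directly inside. Hence
\[
N_n=1+2\sum_{k=0}^{n-1}N_k,\qquad N'_n=2\sum_{k=0}^{n-1}N_k\quad(n\ge1).
\]
The base values are $N_0=1$, since $P_0$ has no child regions, and $N'_0=0$, since the all-white $Q_0$ contains no ordinary box.

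Next we solve the recurrence. Subtracting the equation for $n-1$ from that for $n$ gives $N_n-N_{n-1}=2N_{n-1}$, so $N_n=3N_{n-1}$ for $n\ge2$. The case $n=1$ also holds directly, since $N_1=1+2N_0=3$. Therefore $N_n=3^n$, and then $N'_n=N_n-1=3^n-1$, which also holds at $n=0$.

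As a cross-check, and as an alternative route avoiding the unfolded sum, we can use Proposition~\ref{prop:triple}. An ordinary box of level $n$ decomposes as two ordinary boxes of level $n-1$ and one virtual box of level $n-1$ whose walls are not counted. This gives
\[
N_n=1+2N_{n-1}+N'_{n-1},\qquad N'_n=2N_{n-1}+N'_{n-1},
\]
and induction yields the same closed forms.

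The only delicate point is to justify that each ordinary box is counted exactly once. Distinct child regions are disjoint or nested, because the offsets leave gaps of $4$, or $2$ rows between the two level-$0$ children. The virtual middle region is not itself an ordinary box, which Lemma~\ref{lem:four-virtual} guarantees by showing it is not an ordinal.
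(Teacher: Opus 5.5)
Your proof is correct and follows the paper's route: Theorem~\ref{thm:four}(b) gives $N_0=1$ and $N_n=1+2\sum_{k<n}N_k$, subtraction gives $N_n=3^n$, and the virtual count is $N_n-1$ because a virtual box has the same ordinary children and descendants but is not itself ordinary. Your cross-check via Proposition~\ref{prop:triple}, $N'_n=2N_{n-1}+N'_{n-1}$, is an extra the paper does not have. One step needs tightening: Theorem~\ref{thm:four}(b) speaks about boxes of $\mathcal C$, not virtual boxes, so citing it does not by itself exclude further ordinary boxes inside a virtual box. To cover $N'_n$, add the paper's observation that every ordinary box of level $k<n$ has half-width $v_k\le v_n-4$, so it lies where $Q_n=P_n$.
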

\begin{proof}
Let $N_n$ count the ordinary boxes in an ordinary box of level $n$, including the outer box.
Theorem~\ref{thm:four}(b) gives $N_0=1$ and
$N_n=1+2\sum_{k=0}^{n-1}N_k$ for $n\ge1$. Subtraction gives $N_n=3N_{n-1}$, hence $N_n=3^n$.
By Lemma~\ref{lem:four-virtual}, a virtual box has the same ordinary children and their
descendants. Any ordinary box of smaller level has half-width at most $v_n-4$, where
$Q_n=P_n$, so no further ordinary box appears when the outer walls are changed.
The virtual box itself is not an ordinal, so its count is $N_n-1=3^n-1$.
\end{proof}

\subsection{Relation with Cantor's middle-thirds construction}\label{sec:four-cantor}

The assembly of an ordinary box of level $n$ gives the first $n$ steps of Cantor's
middle-thirds construction when we select its constituent regions as follows. Project
the regions onto the time axis. At each level above $0$, omit the four-row margins at
both ends and retain the two outer ordinary boxes. Remove the open middle span and
discard all boxes inside its virtual region. Repeating this operation inside each retained box
gives the same ordered arrangement of retained intervals and removed middle intervals
as the Cantor construction. The interval lengths agree after the recursive normalization
described below.

For comparison, let $K_0=[0,1]$ and obtain $K_{k+1}$ by deleting the open middle third
of each interval in $K_k$. Equivalently,
\[
K_{k+1}=\tfrac13 K_k\,\cup\,\bigl(\tfrac23+\tfrac13 K_k\bigr).
\]
Thus $K_k$ consists of $2^k$ closed intervals of length $3^{-k}$.
The intersection $\bigcap_{k\ge0}K_k$ is the Cantor set, whose description by ternary expansions with
digits $0$ and $2$ appears in Cantor's 1883 paper \cite[p.~590]{Cantor83}.
Hutchinson gives the formulation by two contractions and its finite approximations
in \cite[Section~3.3(1)]{Hut81}.

To check the correspondence, consider a box of level $m\ge1$ opening at time $e$, and
put $L=\ell_{m-1}$. Proposition~\ref{prop:triple} gives three consecutive constituent
regions with the following patterns and time spans:
\[
\begin{array}{c|c}
\text{pattern}&\text{time span}\\ \hline
P_{m-1}&[e+4,e+4+L]\\
Q_{m-1}&[e+4+L,e+4+2L]\\
P_{m-1}&[e+4+2L,e+4+3L].
\end{array}
\]
Each span has length $L$, and adjacent spans share an endpoint. The identity
$\ell_m=4+3L+4$ accounts for the margins before and after these regions.
Assign the whole box to $[0,1]$ and its three constituent regions to the left, middle
and right thirds. Repeat this assignment inside each retained ordinary box, using its
assigned interval in place of $[0,1]$. This recursively omits the end margins and
normalizes each triple. A single affine change of the original time coordinate would
leave the ratio $L/\ell_m=L/(3L+8)$ unchanged.

After $k$ selections, where $0\le k\le n$, there are $2^k$ retained ordinary boxes of
level $n-k$. Their assigned intervals form $K_k$. Indeed, each selection replaces a
retained box by its two outer constituents and removes the open middle interval
assigned to its virtual constituent. At step $k\ge1$ this excludes $2^{k-1}$ virtual
regions of level $n-k$. After $n$ steps, the $2^n$ retained level-$0$ boxes therefore
correspond to the component intervals of $K_n$, and the $2^n-1$ excluded virtual
regions correspond to the intervals removed during its construction.

\begin{samepage}
For example, a box of level $2$ first splits into $P_1,Q_1,P_1$. Discarding the middle
$Q_1$ and splitting each retained $P_1$ into $P_0,Q_0,P_0$ leaves four ordinary boxes
of level $0$. Their assigned intervals are
\[
K_2=[0,\tfrac19]\,\cup\,[\tfrac29,\tfrac13]\,\cup\,
[\tfrac23,\tfrac79]\,\cup\,[\tfrac89,1].
\]
The original coloured diagram also contains ordinary boxes inside virtual regions,
as Lemma~\ref{lem:four-virtual} shows. In this example the discarded $Q_1$ contains two
further ordinary boxes of level $0$, both excluded at the first selection.
\end{samepage}

\subsection{Proof of Theorem~\ref{thm:four}(a)}\label{sec:four-proof}

The axis entries in the corner and strip tables give $\Delta\cl s{\pm s}=1$.
By Lemma~\ref{lem:unique}, applied to the alphabet $A$ and the rule $g$, it therefore suffices to show that
$\Delta$ is correct at each cell $c=\cl sd$ with $|d|\le s-2$, that is
$\Delta(c)=g(\Delta(\cx),\Delta(\cy),\Delta(\cz))$. We call the quadruple
$\omega(c)=(\Delta(c),\Delta(\cx),\Delta(\cy),\Delta(\cz))$ the \emph{window} of $c$. Whether
$\Delta$ is correct at $c$ depends only on $\omega(c)$. It therefore suffices to show that each
window that occurs somewhere occurs in a bounded region, and to verify $\Delta$ there by
computer.

\begin{lem}[windows]\label{lem:windows}
Let $\mathcal{R}$ be the set of cells $\cl sd$ with $s<613\,224$ that lie in the diagonal band
$|d|\le60$ or in the axis margins $x\le15$, $y\le15$. Each window centred at a cell with $|d|\le s-2$ occurs at such a cell of $\mathcal{R}$.
\end{lem}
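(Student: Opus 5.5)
The plan is a reduction map: for every cell $c=\cl sd$ with $|d|\le s-2$ outside $\mathcal R$, we exhibit a cell $c'\in\mathcal R$ with $\omega(c')=\omega(c)$. The map preserves residues modulo $6$ in $s$ and modulo $12$ in $|d|$, and shifts levels by multiples of $3$. Both $I$ and $\Lam$ depend only on these residues. The offsets $a_{n,k}$, $b_{n,k}$ and the lengths $\ell_n$ are fixed modulo $6$, and $v_n$ changes by $12$ when $n$ changes by $3$. So these shifts preserve every rule of Definitions~\ref{def:four-P} and~\ref{def:four-D} that the four cells of a window can see. By the symmetry of $\Delta$ under $d\mapsto-d$ and of $g$, it suffices to treat $d\ge0$. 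The window consists of $c$ and three parents, all within time distance $2$ and position distance $1$. Therefore only the rule priorities at those four cells matter, and we split into cases by the rule of Definition~\ref{def:four-D} that applies to them.

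\emph{Cells near the axes.} If $x\le15$ or $y\le15$ and $s$ is large, the window lies in the strip tables. These are periodic in $y$ with period $12$ from $y_0(x)\le18$, and beyond the strips the values are $\Lam$. So we shift $y$ down by a multiple of $12$ into $s<613\,224$. Windows meeting the strip boundary $x=13,14,15$ are handled the same way, since $\Lam$ has period $12$ along $y$ for fixed $x$; this is $6$ in $s$ combined with $12$ in $d$.

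\emph{Cells far from the diagonal, $|d|>60$, with $x,y>15$.} There the cell lies either in $\Lam$ (rule (F8)), or in the corridor region of a block (rule (F7)), or near the outer walls of a main box, a virtual box $V_j$, or a junction bar. For pure lattice or corridor windows, we translate by $\cl{6}{0}$ or $\cl0{12}$ steps into the band. For windows touching a wall of half-width $v_n$ with $n$ large, we use the level reduction. We replace the box by the box of the same context (main box, first or other child in the first half, second half, $V_j$, junction) at a level $n'\equiv n\pmod3$ with $v_{n'}\le 60$. We keep the offset from the nearest edge, or the residue modulo $6$ of the offset, and keep $d-v_n$. The context list is finite: block structure of Definition~\ref{def:four-family}, Proposition~\ref{prop:triple}, Proposition~\ref{prop:diagonal}. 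Its first instance with a given residue class of levels occurs in blocks $j\le2$, so before $E_3-1=613\,224$; this is the origin of the bound.

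\emph{Cells in the band $|d|\le60$ with $s\ge613\,224$.} These cells lie in deep boxes. Each window sees at most the border or transition rows of the edges within $4$ rows, corridors, middle blocks, and the exceptional cells of rule (iv). Using Proposition~\ref{prop:triple}, we descend into the unique constituent region containing the window. This replaces the cell by the corresponding cell in a box of lower level with the same local data. The exception is windows straddling a constituent boundary, where the parent's transition and margin rows are seen; these we match with the same configuration in a box of level $\equiv n\pmod3$ below level $12$, present in block $0$ or $1$. The diagonal junction rows of (F4), for $k\ge3$, are matched with $k=1,2$ via $v_{3k+2}\equiv v_{3k-1}\pmod{12}$.

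The main obstacle is the bookkeeping of rule priorities at overlaps. These occur where transition rows of several nested edges meet the corners of inner walls, at the end margins where a closing edge coincides with an opening edge, and at the dashed walls of virtual boxes adjacent to ordinary walls. For each overlap we must check that the reduced configuration has the same winning rule. I would handle this by showing that all relevant half-widths differ from the outermost by one of finitely many constants, namely $0,\pm4$, or $\pm1,\pm3,\pm5$ for transitions. The reduction subtracts the same multiple of $12$ from every half-width exceeding a threshold and fixes the rest, exactly as in the proof of Lemma~\ref{lem:three-initial-local}.
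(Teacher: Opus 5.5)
Your plan follows the paper's proof in outline: both push every window down by maps that keep the offsets from nearby edges, the time residue modulo $6$, the distances to walls and the residue of $|d|$, descending through Proposition~\ref{prop:triple} inside boxes and lowering the diagonal block structure by three levels (all widths by $12$) until the window lies in blocks $0$--$2$. The paper differs from you in lowering box frames one level at a time, since only $|d|\bmod4$ matters inside boxes, and in keeping the three-level shift for places where $\Lam$ is visible.

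The one thing to repair is your split by $|d|\le60$ versus $|d|>60$, which is not exhaustive as stated. The window centred at $\cl{e+2}{65}$, just after the opening $e$ of a main box of level $19$ (half-width $77$), is neither lattice, corridor nor wall. In the band, the pure $\Lam$ windows beside $V_3$ (half-width $49$) and the windows at the outer walls of $V_3$ and of the main boxes of levels $12$ and $13$ do not lie in deep boxes. So split by structure, as the paper does: axis margins, pure $\Lam$, windows in a box, and windows near the block structure. Your descent and wall-context arguments then apply unchanged.
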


Block $3$ begins at $E_3-1=613\,224$, so $\mathcal{R}$ consists of the cells of the initial segment and of
blocks $0$ to $2$ that lie in the band or in the margins.

\begin{proof}
Let $c=\cl sd$, with $x=(s-d)/2$ and $y=(s+d)/2$.

\emph{Cells near the axes.} Let $x\le15$. If $y\le55$ then $s\le70$ and $|d|\le55$, so $c$ lies
in $\mathcal{R}$. Let $y\ge56$. The cells with $x\le13$ lie in the strip, where $\Delta$ depends only on
$(x,\,y\bmod12)$. A cell with $x=14$ or $x=15$ lies at distance $|d|=y-x\ge41$ from the diagonal,
while the rules (F2) to (F7) concern the cells of block $j$ with $|d|\le12j+25$. For $j=0$ this is
less than $41$, and for $j\ge1$ a cell of block $j$ has $s\ge E_j-1\ge30\cdot27^j$, hence
$y-x\ge30\cdot27^j-30>12j+25$. These cells are therefore given by (F8), and their values depend
only on $s\bmod6$ and $|d|\bmod12$, which are functions of $x$ and $y\bmod12$. The windows of $c$
and of $(x,\,y-12k)$ therefore coincide for the $k$ with $56\le y-12k<68$, which is a cell of $\mathcal{R}$.
Cells with $y\le15$ are symmetric. From now on let $x,y\ge16$ and $s\ge58$, since the cells with
$s<58$ lie in $\mathcal{R}$. Then $c$ and its parents lie outside the corner, the strips and the initial
segment.

\emph{Far exterior.} If $c$ and its parents are given by rule (F8), then $\omega(c)$ depends only
on $s\bmod6$ and $|d|\bmod12$, together with the sign of $d$. Here $d\ne0$,
since diagonal cells lie in the boxes or their junction transitions.
Choose the representative with
$s'=400+((s-400)\bmod6)$ and $d'=30+((|d|-30)\bmod12)$. It lies in block $0$,
and all four cells are given by (F8). For negative $d$, reflect the representative
to preserve the order of the two side parents.

\emph{A map for the time coordinate.} We first specify the shortening used below.
For lengths $L,L'\ge21$ with $L\equiv L'\pmod6$, define, for $0\le u<L$,
\[
\phi_{L,L'}(u)=
\begin{cases}
u,&0\le u\le9,\\
L'-(L-u),&1\le L-u\le5,\\
10+((u-10)\bmod6),&10\le u\le L-6.
\end{cases}
\]
The endpoint cases preserve the distance to the corresponding endpoint.
The middle part of the interval is represented by one of the six rows $10,\dots,15$.
In all cases $\phi_{L,L'}(u)\equiv u\pmod6$. We map the centre of a window and then
use its three parents. We do not apply $\phi$ separately to the four cells.

\emph{Cells in a box.} Let $W_m$ be the set of ordered windows obtained from $P_m$
when all four cells lie in its closed region. Thus their centres satisfy
$2\le r\le\ell_m$, $|d|+1\le v_m$ and $v_m+r\equiv d\pmod2$.
The tables give $W_0\subseteq W_1$, and the assembly in
Proposition~\ref{prop:triple} gives $W_{m-1}\subseteq W_m$ for $m\ge2$.
At the cells currently under consideration all four times are at least $56$, so
rule (F2) identifies a window contained in a main box with a window of $P_m$.
The boxes in the initial segment have already been covered by $\mathcal{R}$.

Put $L=\ell_{m-1}$ and $w=v_{m-1}$. The three constituent regions of $P_m$ have
intervals $[4+iL,4+(i+1)L]$, for $i=0,1,2$, and half-width $w$.
If a window is wholly in the first or third region, or wholly in the middle region
off its walls, it belongs to $W_{m-1}$. Call the other windows \emph{frame windows}.
A frame window either crosses an endpoint of these intervals, lies in one of the
outer margins, or has $|d|\ge w-1$.
In the first two cases its cells inside a constituent region are on its edge or
within one row of that edge. In the last case all four positions have absolute value
at least $w-2$, whereas the children of that constituent have half-width at most $w-4$.
Consequently Proposition~\ref{prop:triple}(c), applied to the box and its three
constituents, determines every cell of a frame window without further recursion.

Here is the complete information required by those frame rules. For a region of
half-width $v$, an ordinary wall has value $0$, a virtual wall has value $I$,
and either kind of region has value $0$ on its opening and closing edges.
The rows beside an edge at time $a$ use the following cutoffs:
\[
\begin{array}{c|rrrrrr}
s-a&-3&-2&-1&1&2&3\\ \hline
\text{value}&1&1&0&1&1&0\\
\text{largest }|d|&v-5&v-4&v-3&v-1&v-2&v-3.
\end{array}
\]
For an internal frame, only the rows on the appropriate side of its own edge are
used. The two exceptional cells after an opening have relative coordinates
$(5,\pm(v-3))$ and $(7,\pm(v-3))$, with values $2$ and $1$.
All remaining frame values are $I(r,|d|)$, with $r$ counted from the opening of
the relevant region. This value depends on $r\bmod6$ and
$|d|\bmod4$. The kind of region, distances to its edges and walls, and these two
residues therefore determine the ordered window.

For $m\ge6$, set $L'=\ell_{m-2}$. Map a frame centre $(r,d)$ to $(r',d')$ in level
$m-1$ by
\[
r'=\begin{cases}
r,&r<4,\\
4+iL'+\phi_{L,L'}(r-4-iL),&4+iL\le r<4+(i+1)L,\quad i=0,1,2,\\
\ell_{m-1}-(\ell_m-r),&r\ge4+3L,
\end{cases}
\]
and
\[
d'=\begin{cases}
d,&|d|\le7,\\
d-4\operatorname{sgn}(d),&|d|\ge8.
\end{cases}
\]
Every corresponding half-width decreases by $4$. For $|d|\ge8$, the absolute
positions of all four cells decrease by $4$, so every wall distance and cutoff in
the table is preserved. For $|d|\le7$, all four positions have absolute value at
most $8$. Such a frame window is near horizontal edges, where all relevant rows
reach these positions in both boxes: the smallest half-width in the target is
$v_{m-2}\ge17$, and the largest subtraction in the table is $5$.

Near an endpoint, the time map preserves the offsets of all four cells, including
those on the other side of a shared edge. Away from endpoints, the four relative
times are at least $8$ after an opening and at least $6$ before a closing. No
exceptional or transition row applies there, so the residues modulo $6$ suffice.
The congruence $L\equiv L'\pmod6$ also preserves the phase when an endpoint belongs
to two constituents. Thus the displayed map preserves each of the four values
and takes a frame window to a frame window of level $m-1$.
It follows inductively that $W_m\subseteq W_6$ for every $m$.

\emph{Cells in a virtual box of the diagonal.} A window wholly in $V_j$ off its
walls belongs to $W_{3j+3}$ by Proposition~\ref{prop:diagonal}(a), hence to $W_6$.

\emph{Cells near the block structure.} We give a separate map for the remaining
windows. Set
\[
\theta_j=E_j-4,\qquad L=\ell_{3j+3},\qquad v=v_{3j+3}=12j+13.
\]
The intervals between successive virtual openings satisfy $\theta_{j+1}-\theta_j=5L+20$.
Relative to $\theta_j$, Proposition~\ref{prop:diagonal} gives the following regions and
junction edges:
\[
\begin{array}{c|c|c}
\text{region or edge}&\text{time interval or time}&\text{half-width}\\ \hline
V_j&[0,L]&v\\
\text{ordinary box of level }3j+3&[L,2L]&v\\
\text{ordinary box of level }3j+4&[2L+4,5L+12]&v+4\\
\text{next bar}&5L+16&v+8\\
\text{opening of }V_{j+1}&5L+20&v+12.
\end{array}
\]
The preceding ordinary box closes at $\theta_j-8$, and the preceding bar lies at
$\theta_j-4$, with half-widths $v-8$ and $v-4$. These also determine the parents of
windows centred at $\theta_j$ or $\theta_j+1$.

Each remaining window meets a time endpoint or one of the listed walls, or lies
in a short gap. In a region whose wall is met, its children are four columns further
in, so the frame rules just listed apply. At a crossed endpoint, only the first
or last two rows of that region are needed. In the gaps, the edge-transition table
applies, with the largest half-width taking priority where rows overlap.
Outside these regions and transitions the value is $\Lam$.
The listed regions and transitions have half-width at most $v+12$, so a window
with at least one such cell has centre $|d|\le v+13$.
Thus the additional information, beyond that used for internal frames, is the
absolute time modulo $6$, the absolute position modulo $12$, and the widths of
the neighbouring edges in the table. This covers the four-row gap and both
four-row parts of the junction containing the bar.

For $j\ge3$, put $L'=\ell_{3j}$ and write $s=\theta_j+r$, with $0\le r<5L+20$.
Choose $s'=\theta_{j-1}+r'$ by the following table:
\[
\begin{array}{c|l}
\text{range of }r&r'\\ \hline
{[0,L)}&\phi_{L,L'}(r)\\
{[L,2L)}&L'+\phi_{L,L'}(r-L)\\
{[2L,2L+4)}&2L'+(r-2L)\\
{[2L+4,5L+12)}&2L'+4+\phi_{3L+8,3L'+8}(r-2L-4)\\
{[5L+12,5L+20)}&5L'+12+(r-5L-12).
\end{array}
\]
For the position use $d'=d$ when $|d|\le13$ and
$d'=d-12\operatorname{sgn}(d)$ when $|d|\ge14$.
All corresponding widths, including the preceding junction widths, decrease by
$12$. In the latter case the absolute positions of all four cells decrease by
$12$, preserving the cutoffs and the periods of both $I$ and $\Lam$.
In the former case the window meets horizontal edges or their transitions, which
are constant across $|d|\le14$. Even the smallest preceding edge in the target
has half-width $29$, so its narrowest transition reaches $|d|=24$.

The time map preserves each short gap and the offsets from both of its endpoints.
Within a long region the same endpoint and residue argument as for internal frames
applies. Since $\theta_j\equiv \theta_{j-1}\pmod6$ and $L\equiv L'\pmod6$, absolute lattice
phases are preserved as well. These facts give equality of the four ordered values.
Repeating the map places the remaining window between $\theta_0$ and $\theta_3$, at a position
with $|d|\le12\cdot2+26=50$. The windows before $s=58$ were already in $\mathcal{R}$.

Finally, $W_6$ is realised inside the main box of level $6$ in block $1$, which lies
in $\mathcal{R}$. Together with the axis-margin and exterior cases, this proves the lemma.
\end{proof}

\begin{prop}[computer verification]\label{prop:four-machine}
\begin{alphenum}
\item $\Delta$ is correct at each cell $\cl sd$ of the region $\mathcal{R}$ with $|d|\le s-2$. This is a check of $55\,492\,541$ recurrence windows, with
$2\le s<613\,224$ and either $|d|\le60$ or $x\le15$ or $y\le15$.
\item The argument triples of $g$ that occur at the cells of part (a) are the $56$ triples of $A^3$
other than the eight listed in Table~\ref{tab:g}.
\item $\Delta$ agrees with $\tau$, computed from the recurrence, at all cells with $x,y<40000$.
\end{alphenum}
\end{prop}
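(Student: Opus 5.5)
This proposition is a finite computational claim, so the plan is to describe an exact, reproducible computation for each part and the safeguards that make its output trustworthy.

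\emph{Part (a).} First implement $\Delta$ literally from Definition~\ref{def:four-D}, independently of any simulation of $\tau$. This needs:
\begin{itemize}
\item the tables of Appendix~\ref{app:tables} (corner, strips, initial segment, $P_0$, $P_1$, $\mathcal M$);
\item the recursive evaluation of $P_n$ through Definition~\ref{def:four-P}, descending into the child containing a cell via the offsets $a_{n,k},b_{n,k}$;
\item the block structure from $E_j$ and Definition~\ref{def:four-family};
\item rules (F2)--(F8), with the priority order and the ``wider region wins'' convention for transition rows.
\end{itemize}
Then enumerate every cell $\cl sd$ with $2\le s<613\,224$, $|d|\le s-2$, $s\equiv d\pmod 2$, and either $|d|\le60$ or $x\le15$ or $y\le15$. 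At each such cell, evaluate $\Delta$ at the cell and at its three parents $\cx,\cy,\cz$, and check that $\Delta(c)=g(\Delta(\cx),\Delta(\cy),\Delta(\cz))$ using Table~\ref{tab:g}. Count the windows to confirm the total $55\,492\,541$. Since only blocks $0$--$2$ occur, the recursion depth is bounded by level $10$, and the evaluation can be memoised per box pattern, so the run is cheap.

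\emph{Part (b).} During the same pass, record the set of argument triples actually encountered. Then check that this set has $56$ elements and that its complement in $A^3$ is exactly the eight triples listed under Table~\ref{tab:g}. To confirm that these eight are genuinely never needed, rerun the check of (a) with $g$ altered at those eight triples; it must still pass.

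\emph{Part (c).} Compute $\tau$ directly from the recurrence on the square $x,y<40000$. Proceed row by row, keeping only the previous row, which uses $O(40000)$ memory. Compare each value with $\Delta\cl{x+y}{y-x}$. This part is not logically needed for Theorem~\ref{thm:four}(a), given (a), Lemma~\ref{lem:windows} and Lemma~\ref{lem:unique}. Its role is to cross-validate the implementation of $\Delta$: a bug in the formulas, such as a misplaced offset or a wrong phase of $I$ versus $\Lam$, would most likely show up there as a disagreement.

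The main obstacle is that part (a) certifies only the coded $\Delta$, not the $\Delta$ on paper; faithfulness of the implementation to Definition~\ref{def:four-D} is the real risk. I would address it in three ways:
\begin{itemize}
\item code the rules in a second, independent program (for instance one that builds explicit arrays for $P_n$ by the assembly of Proposition~\ref{prop:triple} instead of by rule evaluation) and check that the two agree on $\mathcal R$;
\item spot-check the implementation against the figures and against the explicit values stated in Propositions~\ref{prop:triple} and~\ref{prop:diagonal};
\item rely on the agreement with $\tau$ in (c), which covers blocks $0$ and $1$ completely near the diagonal.
\end{itemize}
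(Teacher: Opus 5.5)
Your plan is essentially the paper's proof, which simply cites items 6 and 7 of Section~\ref{sec:computer}: a direct check of the recurrence at every window of $\mathcal{R}$ with the argument triples recorded, and a comparison with $\tau$ on $x,y<40000$ that, as you say, is an independent check not used in the proof. Your enumeration ($2\le s<613\,224$, $|d|\le s-2$, matching parity, band or margins) reproduces the count $55\,492\,541$ exactly. Your extra safeguards go beyond what the paper reports but do not change the approach; the rerun with $g$ altered on the eight unused triples is redundant once the triples have been recorded.
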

\begin{proof}
By computation, items 6 and 7 of Section~\ref{sec:computer}. Part (c) is an independent check and
is not used in the proofs.
\end{proof}

\begin{proof}[Proof of Theorem~\ref{thm:four}(a)]
Let $c$ be a cell with $|d|\le s-2$. By Lemma~\ref{lem:windows} its window occurs at a cell $c'$ of
$\mathcal{R}$, and $\Delta$ is correct at $c'$ by Proposition~\ref{prop:four-machine}(a). Hence $\Delta$ is
correct at $c$, and $\Delta=\tau$ by Lemma~\ref{lem:unique}.
\end{proof}

\begin{cor}\label{cor:four-triples}
The triples $(\tau(x-1,y),\tau(x-1,y-1),\tau(x,y-1))$ with $x,y\ge1$ are exactly the $56$ triples
of $A^3$ other than the eight listed in Table~\ref{tab:g}.
\end{cor}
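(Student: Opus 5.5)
The plan is to combine Theorem~\ref{thm:four}(a) with the window reduction of Lemma~\ref{lem:windows} and the machine count of Proposition~\ref{prop:four-machine}(b). First, by Theorem~\ref{thm:four}(a), $\tau(x,y)=\Delta\cl{x+y}{y-x}$. So the set of argument triples of $g$ in the computation of $\tau$ equals the set of triples $(\Delta(\cx),\Delta(\cy),\Delta(\cz))$ over all cells $c=\cl sd$ with $|d|\le s-2$. In rotated coordinates these are exactly the cells $(x,y)$ with $x,y\ge1$: the parents of $(x,y)$ are $(x-1,y)$, $(x-1,y-1)$, $(x,y-1)$, which are $\cx,\cy,\cz$.

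For the inclusion ``$\subseteq$'', I would take an arbitrary such cell $c$. Lemma~\ref{lem:windows} gives a cell $c'\in\mathcal R$ with $|d'|\le s'-2$ and $\omega(c')=\omega(c)$. In particular, the last three entries, which form the argument triple, coincide. Proposition~\ref{prop:four-machine}(b) lists the triples occurring at cells of $\mathcal R$ as exactly the $56$ triples outside the eight of Table~\ref{tab:g}. Hence every triple of $\tau$ is among these $56$.

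For the inclusion ``$\supseteq$'', each of the $56$ triples occurs at some cell of $\mathcal R$ by Proposition~\ref{prop:four-machine}(b). Since $\mathcal R$ consists of actual cells of the quadrant with $|d|\le s-2$, and $\Delta=\tau$ there, each such triple is a genuine argument triple of $\tau$.

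The only point needing care is that Lemma~\ref{lem:windows} was stated for ordered windows, meaning quadruples including the centre value. The triple is a sub-tuple of the window, so equality of windows is stronger than needed. The reflection used in the lemma for negative $d$ was chosen precisely to preserve the order of the side parents, so the ordered triple, not merely its reversal, is preserved. I expect no real obstacle beyond confirming that part (b) of the computer check ranges over the same cells as part (a), namely all of $\mathcal R$ with $|d|\le s-2$.
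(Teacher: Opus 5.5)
Your proposal is correct and follows the paper's own argument: the triple is the last three entries of the window, Lemma~\ref{lem:windows} moves every window into $\mathcal{R}$, and Proposition~\ref{prop:four-machine}(b) lists the triples that occur there. The paper leaves two points implicit that you state outright: the appeal to Theorem~\ref{thm:four}(a) to pass between $\tau$ and $\Delta$, and the separation into the two inclusions.
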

\begin{proof}
The triple at a cell consists of the last three entries of its window. By Lemma~\ref{lem:windows}
each window occurs in $\mathcal{R}$, and the triples that occur there are given by
Proposition~\ref{prop:four-machine}(b).
\end{proof}

\subsection{Proof of Theorem~\ref{thm:four}(b) and (c)}\label{sec:four-bc}

\begin{lem}[separation from the axes]\label{lem:four-margins}
For $s\ge58$, a centred white segment on row $s$ has half-width at most $s-32$.
Consequently every ordinal opening at $s\ge58$ lies in $x,y\ge16$.
\end{lem}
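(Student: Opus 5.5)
Since Theorem~\ref{thm:four}(a) identifies $\tau$ with $\Delta$, I would argue entirely with $\Delta$. The plan is to determine, for $s\ge58$, where row $s$ can contain white cells near the two axes, and to show that a centred white run on row $s$ (cells $\cl sd$ with $|d|\le h$, all white) cannot reach within $32$ of the axis ends $|d|=s$.

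The key step is to show that some cell with $|d|\ge s-31$ is non-white on every row $s\ge58$. Suppose the centred segment has half-width $h\ge s-31$. Then it contains the cells with $|d|=s-2k$ for $2k\ge31$, i.e.\ the cells with $x=k\ge16$ in the original coordinates (and symmetrically $y=k$). In particular it contains the cells with $x=16$ and $x=17$ (one of the two has the right parity for row $s$, together with the neighbouring columns $x=15$ or so). I would instead look at small $x$ directly: $h\ge s-31$ means all cells of row $s$ with $x\ge 16$ and $y\ge16$ are white, so in particular the cell with $x=16$ or $x=17$ and the cell with $x=18$ or $19$. I would use the structure of the strip/exterior: for $y\ge56$, cells with $x\le15$ lie in the strip or are given by (F8), with values depending only on $(x,y\bmod12)$, and the lemma-on-windows argument already showed that cells with $|d|\ge41$ and small $x$ are governed by $\Lam$. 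So I would show that for $16\le x\le 19$ and $s\ge58$ (where $|d|=s-2x\ge 20$, and for large $s$ well outside every block band $|d|\le12j+25$) the cell is given by $\Lam$, and by Table~\ref{tab:lam} $\Lam$ on two consecutive positions of the same parity cannot both be white unless both $|d|\bmod12\in\{1,5,9\}$; checking the residues shows a run of positions $|d|=s-32,\dots,s-38$ (spacing $2$) always contains a non-white $\Lam$-value, since the white columns $1,5,9$ mod $12$ with restrictions on $s\bmod6$ leave gaps at columns $3,7,11$, which are never white. Thus any step of $4$ in $|d|$ among odd positions hits a non-white column; for even rows no $\Lam$ column is white at all.

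For the small rows where $|d|\approx s-32$ is still inside a block band (only finitely many $s$, roughly $58\le s$ with $s-38\le 12j+25$, which forces $j=0$ and $s\lesssim 90$), I would read the values from the corner, strip and initial-segment tables, or from Proposition~\ref{prop:four-machine}(c), a finite check. The consequence follows: an ordinal opening at $s\ge58$ has its opening edge as a centred white segment of half-width $w\le s-32$, so its vertex has $x=(s-w)/2\ge16$, and since later rows $s'$ of the ordinal have $x$-coordinates increasing along the walls, all its cells satisfy $x,y\ge16$. The main obstacle is the finite initial range where the band rules (F2)–(F7) and the tables interfere; I would dispatch it by direct computation rather than by hand.
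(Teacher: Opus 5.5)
Your plan is essentially the paper's argument. For $s\ge58$, the cells a bounded distance from the axes lie outside every band $|d|\le12j+25$ of the block rules (F2)--(F7); for $j\ge1$ this uses $s\ge E_j-1\ge30\cdot27^j$. They therefore follow $\Lambda$, which is never white on even rows nor at $|d|\equiv3\pmod4$ on odd rows, so of two consecutive positions on the row at least one is non-white.

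The paper uses the pair $x=15,16$, that is $|d|=s-30$ and $|d|=s-32$. Both already lie in the segment once $h\ge s-30$, which by parity is the negation of the claim, and $s-32>25$ holds from $s=58$ on. With that choice your exceptional rows $58\le s\le63$ and their computer check disappear. Those cells have $y\ge39$, so they are not in the corner, strip or initial-segment tables you cite in any case.

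Two slips in the write-up:
\begin{itemize}
\item Your key step should ask for a non-white cell with $|d|\le s-30$, not $|d|\ge s-31$. As written it is trivially satisfied by the red axis cell $|d|=s$.
\item Every $x\in[0,s]$ gives a cell of row $s$, so no parity choice between $x=16$ and $x=17$ is needed.
\end{itemize}
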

\begin{proof}
In block $j$, the rules (F2) to (F7) affect only $|d|\le12j+25$, including the
junction before block $j+1$. For $s\ge58$ in block $0$, we have $s-32>25$. For $j\ge1$,
$s\ge E_j-1\ge30\cdot27^j$ gives $s-32>12j+25$.
Thus the cells $[s,s-30]$ and $[s,s-32]$, with $x=15$ and $x=16$, are given by
$\Lam$. On even rows both are non-white. On odd rows, their pairs of values for
$s\equiv1,3,5,7,9,11\pmod{12}$ are, respectively,
\[
(2,0),\ (0,3),\ (2,0),\ (0,3),\ (3,0),\ (0,1).
\]
They are never both white. A centred white segment therefore has half-width less than
$s-30$, hence at most $s-32$ by parity. If an ordinal opens at $s_0$ with half-width
$W\le s_0-32$, each of its cells has $x,y\ge(s_0-W)/2\ge16$.
\end{proof}

By part (a) we work with $\Delta$. We use three facts on the white cells with $x,y\ge16$ and
$s\ge56$, which follow from Definition~\ref{def:four-D} and Propositions~\ref{prop:triple}
and~\ref{prop:diagonal}.
\begin{romenum}
\item On an even column $|d|=W$ the cells $\cl sW$ and $\cl{s+2}W$ are never both white.
The pattern $I$, the lattice $\Lam$ and the rules (iv) and (F6) never give $0$ on even
columns. A white cell in an even column must therefore lie on a white transition row
$e-1$ or $e+3$ of an edge $e$, on a white junction row, or at one of the positions
$(3,0)$, $(3,\pm2)$, $(7,0)$, $(11,0)$ and $(13,\pm2)$ in $P_1$ or $\mathcal{M}$.
No two of these white cells are two rows apart in the same column.
\item On an odd row, the cells with $|d|=1$ or $|d|=3$ are white only on the row of an edge of
half-width at least $|d|$, where an edge means an edge of a box of $\mathcal C$, an edge of a virtual
box $V_j$, or a bar. On odd rows the transition rows are red, the pattern $I$ is not white on the
middle columns $a\equiv3\pmod4$, the lattice $\Lam$ is not white on $|d|\equiv3\pmod{12}$, and the
column $|d|=1$ is covered by the boxes and their transition rows. Moreover, if $a$ is the
greatest half-width of an edge on that row, its adjacent cells $[s,\pm(a+2)]$ are
non-white. For edges inside ordinary or virtual regions these cells lie on middle
corridor columns. At level $0$, their non-white values also follow from the tables of
$P_1$ and $\mathcal{M}$.
At the remaining main edges, of levels $3j+3$ and $3j+4$, one has
$s\equiv1,5\pmod6$ and $a+2\equiv3,7\pmod{12}$, giving the values $2$ and $1$ in
the table of $\Lam$. At a virtual opening, $s\equiv3\pmod6$ and
$a+2\equiv3\pmod{12}$ give $3$. At a bar, $s\equiv5\pmod6$ and
$a+2\equiv11\pmod{12}$ give $2$.
\item The row of such an edge of half-width $a$ is white on $|d|\le a$, and the row two rows after
it is red on $|d|\le a-2$.
\end{romenum}

\begin{proof}[Proof of Theorem~\ref{thm:four}(b) and (c)]
Let $B^\ast$ be an ordinal with half-width $W$, top row $s_0$ and bottom row $s_1>s_0$. Then
$s_0\equiv s_1\equiv W\pmod2$, the rows $s_0$ and $s_1$ are white on $|d|\le W$, and the cells
$\cl s{\pm W}$ with $s_0\le s\le s_1$ and $s\equiv W\pmod2$ are white. The ordinals with $s_0<58$
are the boxes of $\mathcal C$ by item 8 of Section~\ref{sec:computer}. To justify its
finite cutoff, we check the $53$ possible centred white opening segments with
$s_0<58$. For each segment, a prospective wall meets a non-white cell by time $111$
(item~\ref{check:arithmetic-and-boundaries}), so no ordinal with such an opening can extend beyond
the enumeration range.
For $s_0\ge58$, Lemma~\ref{lem:four-margins} places the whole ordinal in $x,y\ge16$,
so (i) applies at its wall and makes $W$ odd. By (ii), applied at $|d|=3$ if $W\ge3$
and at $|d|=1$ otherwise, $s_0$ is an edge time. Let $E$ be a widest edge there,
of half-width $a$. Its adjacent cell at $|d|=a+2$ is non-white, so $W\le a$.

Suppose first that $s_1=s_0+2$. The closing row is white at $|d|=1$, so $a-2<1$
by (iii), and $E$ is an edge of a box of level $0$. If $E$ is an opening edge,
$B^\ast$ is that box. If $E$ closes the first of the two level-$0$ boxes in a middle
block, then $B^\ast$ would be the virtual box between the pair. Its centre is white,
so the condition on interior rows excludes this case. If $E$ closes the second
box in the pair, the cells at $|d|=1$ are green two rows later, which prevents a
white closing edge at $s_0+2$.

Now let $s_1\ge s_0+4$. The wall cell $\cl{s_0+2}W$ is white, so $W\ge a-1$
by (iii). Since $W\le a$ and both half-widths are odd, $W=a$.
If $a=1$, the interior row $s_0+2$ has no cell with $|d|<1$, contrary to the
condition on interior rows. Thus $a\ge5$.

If $E$ opens a box $B$ of $\mathcal C$, no row strictly between the edges of $B$
can close an ordinal of half-width $W$. On each such row of the appropriate parity,
the cells at $|d|=W-2$ are non-white. They are red at offset $2$ from the opening
and otherwise follow a middle column of $I$, or the non-white entries of $P_1$
when $B$ has level $1$. Hence $s_1$ is at least the closing time of $B$.
The closing row of $B$ is white on $|d|<W$, so that row cannot be an interior row
of $B^\ast$. Therefore $s_1$ is the closing time of $B$, and $B^\ast=B$.

If $E$ is not the opening edge of an ordinary box, there are four possibilities:
\begin{itemize}
\item $E$ closes a first sub-box $A_1$. The column $|d|=W$ then follows the dashed
wall of the middle virtual box $A_2$, with a blue cell at time $s_0+6$.
\item $E$ closes a box and is followed four rows later by an edge of half-width
$W+4$. The transition row before that edge makes $\cl{s_0+2}W$ red.
\item $E$ is a bar. Rule (F4) makes $\cl{s_0+2}W$ red.
\item $E$ opens $V_j$. Its dashed wall has a blue cell at time $s_0+6$.
\end{itemize}
A red wall cell at $s_0+2$ prevents $B^\ast$ from extending to $s_1\ge s_0+4$.
In either case with a dashed wall, the blue cell at $s_0+6$ forces $s_1=s_0+4$.
At time $s_0+4$, the first child of the virtual box opens with half-width $W-4$.
The cells at $|d|=W-2$ lie on a middle corridor column and are non-white, so this
row cannot be the closing edge of $B^\ast$.

Hence the ordinals are the boxes of $\mathcal C$, and the ordinals directly inside a box are its
children. All boxes of level $n$ are translates of each other by the rules (F2) and (ii), and the
children are described in Section~\ref{sec:four-boxes}. This proves (b). The main boxes are the
boxes of $\mathcal C$ that are not children of other boxes, and their positions are those of
Definition~\ref{def:four-family}, which proves (c).
\end{proof}

\section{Non-automaticity}\label{sec:nonauto}

For an integer $k\ge2$, the \emph{$k$-kernel} of a sequence $u:\N\to A$ is the set
of sequences $n\mapsto u(k^en+b)$ with $e\ge0$ and $0\le b<k^e$.
The sequence is $k$-automatic if and only if this set is finite
\cite[Theorem~6.6.2]{AS03}.
For a double sequence $T:\N^2\to A$, the $k$-kernel consists of the double sequences
$(x,y)\mapsto T(k^ex+a,k^ey+b)$ with $e\ge0$ and $0\le a,b<k^e$.
Again, finiteness of the kernel characterises $k$-automaticity
\cite[Chapter~14]{AS03}. We use these characterisations as definitions.

\begin{thm}\label{thm:nonauto}
None of the sequences $t$, $\rho$ and $\tau$ of Theorems~\ref{thm:main},
\ref{thm:three-initial} and~\ref{thm:four} is $k$-automatic for any integer $k\ge2$.
\end{thm}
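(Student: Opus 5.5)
The plan follows the strategy announced in the introduction. We restrict each sequence to the line $y=x+1$ and show that some letter has unbounded gaps there, each of length $O(\log(\text{position}))$. Then we prove a general lemma that no $k$-automatic sequence has such gaps.

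\emph{Step 1: reduction to one dimension.} If a double sequence $T$ is $k$-automatic, then $u(n)=T(n,n+1)$ is $k$-automatic. Using the kernel characterisation, write $n\mapsto u(k^en+b)$. For $b<k^e-1$ this is $n\mapsto T_1(n,n)$, where $T_1(x,y)=T(k^ex+b,k^ey+b+1)$ is in the kernel of $T$. For $b=k^e-1$ it is $n\mapsto T_2(n,n+1)$, where $T_2(x,y)=T(k^ex+k^e-1,k^ey)$ is again a kernel element. So the kernel of $u$ is contained in the finite set of sequences $n\mapsto T'(n,n)$ and $n\mapsto T'(n,n+1)$ with $T'$ in the finite kernel of $T$. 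Hence it is finite.

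\emph{Step 2: the gap lemma.} Let $u$ be $k$-automatic and let $a$ occur infinitely often in $u$. Suppose the gaps between consecutive occurrences of $a$ are unbounded. Then there is $c>0$ such that for infinitely many $N$ the interval $[N,(1+c)N]$ contains no $a$. In particular, gaps cannot be $O(\log N)$.

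To prove this, take a DFAO with $Q$ states that reads base-$k$ digits with the most significant digit first. A gap of length at least $2k^{J}$ contains a full block $[mk^J,(m+1)k^J)$ with $m\ge1$. Let $q$ be the state reached after reading $m$, and let $S_i$ be the set of states reachable from $q$ by words of length $i$. Every state of $S_J$ outputs a letter other than $a$. The sequence $(S_i)$ is eventually periodic, with preperiod and period at most $2^Q$. Choose $J\ge 2^{Q}$ from an unbounded gap; this is possible since the gaps are unbounded. Then $S_{J+tp}=S_J$ for all $t\ge0$, where $p$ is the period. So $[mk^{J+tp},(m+1)k^{J+tp})$ avoids $a$ for every $t$, which gives gaps of ratio $1/m$ along an infinite sequence of positions. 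This contradicts the bound $O(\log N)$.

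\emph{Step 3: the three sequences.} For $t$, take the white letter on $d=1$, which is $y=x+1$. By Theorem~\ref{thm:main} and \ref{P1}, the white cells with $d=1$ and $s\ge5$ are exactly the border cells of boxes of $\cB$ on that column, since every edge has half-width at least $1$. Consecutive elements of each $\cE(H)$ are at most $10$ rows apart, except the switching gaps. By Lemma~\ref{lem:timeline}(b) the switching gap of $S_n$ has length $4n+2$ and occurs at time about $3\cdot 2^{n+1}$, and switching gaps of boxes of level $n$ lie beyond time $2^{n+2}$. Hence every gap starting at position $N$ has length $O(\log N)$, and the gaps are unbounded.

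For $\rho$, we use the red cells, which are the zeros of $U$, in the same way. The edges are at most $b_3=14$ rows apart except the central gap $2n+8$. Since $b_n$ grows exponentially, a box of level $n$ starts beyond $\lambda^n$ for some $\lambda>1$. The two exceptional diagonal cells have $d=0$ and do not matter here.

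For $\tau$ we use Theorem~\ref{thm:four} and the pattern $I$ on the column $a=1$, the wall column, which is $0$ at $r\equiv2,4\pmod 6$. The choice of letter is more delicate here. I would look for a letter that appears on $d=1$ only near edges or junction rows. A candidate is one that occurs on the column $|d|=1$ only in transition rows (value $1$) or in the $P_0$ centres, so that its long gaps come from the stretches of $I$ in corridor $0$. The longest such stretches must be shown to be $O(n)$ inside boxes of level $n$, for example around $\mathcal M$ or in the junction of block $j$. Those boxes start at times of order $3^n$ by Definition~\ref{def:four-family}. Alternatively, if the lattice segments are periodic, I would use the complement letter pattern.

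\emph{Main obstacle.} For $\tau$, one must extract from $\Delta$, Definition~\ref{def:four-P} and Table~\ref{tab:lam} a letter whose occurrences on $y=x+1$ are neither eventually periodic nor too sparse. The gaps must grow with the level but stay logarithmic. I would first compute $\tau(n,n+1)$ for each letter by machine, as in Section~\ref{sec:computer}, to find the right letter. Then I would justify the gap bounds from Proposition~\ref{prop:diagonal} and Proposition~\ref{prop:triple}.
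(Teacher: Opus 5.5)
Your Steps 1 and 2 match the paper's Lemmas~\ref{lem:line} and~\ref{lem:gaps}: your reachable state sets $S_i$ are the automaton form of the kernel sets $\mathcal{K}_i$, with the same pigeonhole. Your treatment of $t$ via the white cells on $d=1$ is the paper's argument.

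\textbf{The gap is $\tau$.} You never fix a letter or prove the two gap properties, so the theorem is not established for $\tau$. The candidate you sketch, red, also fails. The line $d=1$ contains only odd rows, so the only transition rows meeting it are the red rows $e'\pm2$. Red occurs at relative row $2$ of every $P_1$ and every $\mathcal{M}$, and two rows after every closing edge of a box of positive level. The column $|d|=1$ is covered by boxes, middle blocks and transition rows, and the $P_0$ centres lie on $d=0$, so there are no long stretches of $I$ to exploit. Consecutive red cells on this line are therefore at most $14$ rows apart. White is likewise dense, since every edge meets $d=1$.

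\textbf{The missing idea: use green, the letter $3$.}
\begin{enumerate}
\item In the tables, $P_1$ and $\mathcal{M}$ are green on $d=\pm1$ only at relative row $12$.
\item No other rule yields $3$ on that line. Borders, transition rows and junction rows give $0$ or $1$. Rules (iv) and (F6) act on even columns. $I$ on the wall column $a=1$ gives $2,0,0$, and $\Lambda$ is white on $|d|=1$.
\item Since $c_{\mathrm{mid}}=\ell_n/2-7$, every box of level $n\ge1$, and every $V_j$, has a green cell at offset $\ell_n/2+5$.
\item Induction over the three constituents of Proposition~\ref{prop:triple} puts every row of a box of level $n$ within $4n+12$ rows of a green cell.
\item Main boxes are at most $12$ rows apart, and those meeting rows up to $s$ have level at most $\log_3 s+1$. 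Together with item 4 this gives gaps of length $O(\log x_0)$.
\item Unboundedness: in a box of level $n\ge3$, the last green row of its first child and the first green row of its second child are $8n-2$ rows apart, with no green cell between them.
\end{enumerate}

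\textbf{A smaller slip for $\rho$.} Since $C(n,e)$ has $e\equiv n\pmod 2$ and $b_n$ is even, edges of even-level boxes lie on even rows and never meet $d=1$. The red cells of $\rho(x,x+1)$ are therefore only the edges of odd-level boxes. So ``edges are at most $14$ rows apart'' does not by itself bound the gaps on this line. Your conclusion still holds:
\begin{itemize}
\item level-$2$ boxes contribute no red cells and only create gaps of $12$;
\item even-level parents of level $\ge4$ have odd-level first and last children $3$ rows inside their edges;
\item the central gap $2n+8$ remains the largest.
\end{itemize}
This odd-level bookkeeping has to be argued explicitly, as the paper does with the sets $Z_n$.
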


For each of the three sequences, we choose a letter on the line $y=x+1$ whose
gaps are unbounded but have length $O(\log(x+2))$, where $x$ is the starting position
of a gap. Lemma~\ref{lem:gaps} shows that an automatic sequence with unbounded gaps
must contain arbitrarily late intervals without the chosen letter whose lengths
are proportional to their starting positions.
These two properties are incompatible.

\begin{lem}\label{lem:line}
If $T:\N^2\to A$ is $k$-automatic, then so is the sequence $u(x)=T(x,x+1)$.
\end{lem}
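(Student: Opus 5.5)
We prove the lemma directly from the kernel characterisation adopted above. We show that the $k$-kernel of $u$ is finite whenever the $k$-kernel of $T$ is finite. Fix $e\ge0$ and $0\le b<k^e$, and consider the kernel element
\[
u_{e,b}(n)=u(k^en+b)=T\big(k^en+b,\ k^en+b+1\big).
\]
The second coordinate is $k^en+(b+1)$. If $b+1<k^e$, this is $T_{e,b,b+1}(n,n)$, where $T_{e,a,c}(x,y)=T(k^ex+a,k^ey+c)$ is an element of the $k$-kernel of $T$. If $b=k^e-1$, the second coordinate equals $k^e(n+1)+0$, so $u_{e,b}(n)=T_{e,k^e-1,0}(n,n+1)$.

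Hence every kernel element of $u$ has one of two forms: $n\mapsto S(n,n)$ or $n\mapsto S(n,n+1)$, where $S$ ranges over the $k$-kernel of $T$. Since that kernel is finite, there are at most $2|\ker_k T|$ such sequences. Therefore the $k$-kernel of $u$ is finite, and $u$ is $k$-automatic.

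The only point requiring care is the carry case $b=k^e-1$. There the offset $b+1$ falls outside the admissible range $[0,k^e)$, and it is absorbed by shifting the $y$-argument by one, which is why the line $y=x+1$ rather than the diagonal appears. No further recursion is needed: both forms are evaluated at a fixed finite collection of double sequences along the two fixed lines $y=x$ and $y=x+1$, so the bound is uniform in $e$.

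As an alternative, one could argue with automata. One would read the base-$k$ digits of $x$ and $x+1$ in parallel, and a finite transducer would track the carry. This requires fixing a digit-reading convention for two-dimensional automata, so the kernel argument is preferable.
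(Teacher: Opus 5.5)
Your proposal is correct and matches the paper's proof: both split the kernel element $n\mapsto u(k^en+b)$ into the case $b+1<k^e$, where it is the diagonal of $(x,y)\mapsto T(k^ex+b,k^ey+b+1)$, and the carry case $b=k^e-1$, where it is the line $y=x+1$ of $(x,y)\mapsto T(k^ex+k^e-1,k^ey)$. Both conclude that the $k$-kernel of $u$ has at most twice as many elements as that of $T$.
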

\begin{proof}
Let $e\ge0$ and $0\le b<k^e$. If $b+1<k^e$, then $u(k^ex+b)=T(k^ex+b,k^ex+b+1)$ is the diagonal
$x\mapsto T'(x,x)$ of the element $T'(x,y)=T(k^ex+b,k^ey+b+1)$ of the $k$-kernel of $T$. If
$b=k^e-1$, then $u(k^ex+b)=T(k^ex+k^e-1,k^e(x+1))$ is $x\mapsto T''(x,x+1)$ for the element
$T''(x,y)=T(k^ex+k^e-1,k^ey)$ of the $k$-kernel of $T$. Hence the $k$-kernel of $u$ has at most
twice as many elements as that of $T$.
\end{proof}

\begin{lem}\label{lem:gaps}
Let $u:\N\to A$ be $k$-automatic, let $N$ be the number of elements of its $k$-kernel, and let the
letter $a$ occur in $u$. If $u(n)\ne a$ for all $n$ in an interval of length $2k^m$ with $m\ge2^N$,
then there are integers $q\ge1$ and $p\ge1$ such that $u(n)\ne a$ for all
$n\in[qk^{m+tp},(q+1)k^{m+tp})$ and all $t\ge0$.
\end{lem}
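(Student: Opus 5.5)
The plan is to encode the condition ``$u(n)\ne a$ on $[qk^j,(q+1)k^j)$'' as a property of a subset of the $k$-kernel, and then use pigeonhole on the at most $2^N$ subsets.

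\emph{Choosing $q$.} Let the given interval be $[A,A+2k^m)$. Take the least $q\ge0$ with $qk^m\ge A$. Then $(q+1)k^m\le A+2k^m$, so $[qk^m,(q+1)k^m)$ lies in the interval. If $q=0$, then $A=0$, and $[k^m,2k^m)$ also lies in $[0,2k^m)$, so we may replace $q$ by $1$. Thus we have $q\ge1$ and $u(n)\ne a$ for all $n\in[qk^m,(q+1)k^m)$. From now on $q$ stays fixed.

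\emph{The subset dynamics.} For $j\ge0$ put
\[
T_j=\{\,x\mapsto u(k^jx+c)\;:\;0\le c<k^j\,\}.
\]
This is a subset of the $k$-kernel, so it is one of at most $2^N$ sets. The condition $u(n)\ne a$ on $[qk^j,(q+1)k^j)$ is equivalent to $v(q)\ne a$ for every $v\in T_j$, because $n=qk^j+c$ with $0\le c<k^j$.

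Next, write $c<k^{j+1}$ as $c=c'+k^ji$ with $c'<k^j$ and $i<k$. Then
\[
u(k^{j+1}x+c)=u_{j,c'}(kx+i),\qquad u_{j,c'}(y)=u(k^jy+c').
\]
Hence $T_{j+1}=\Phi(T_j)$, where $\Phi(S)=\{x\mapsto v(kx+i): v\in S,\ 0\le i<k\}$. The map $\Phi$ is a fixed map on subsets of the kernel; it is well defined because the kernel is closed under $v\mapsto v(k\,\cdot+i)$.

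\emph{Pigeonhole.} The sets $T_0,\dots,T_{2^N}$ are $2^N+1$ values in a set of at most $2^N$ elements. So there are $0\le j<j'\le 2^N\le m$ with $T_j=T_{j'}$. Put $p=j'-j\ge1$. Since $T_{i+1}=\Phi(T_i)$, the sequence $(T_i)$ is periodic with period $p$ from index $j$ on. As $m\ge j$, this gives $T_{m+tp}=T_m$ for all $t\ge0$. The property ``every $v\in T_m$ satisfies $v(q)\ne a$'' holds by the choice of $q$, so it holds for $T_{m+tp}$. This is exactly the statement that $u(n)\ne a$ on $[qk^{m+tp},(q+1)k^{m+tp})$.

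The only delicate points are the recursion $T_{j+1}=\Phi(T_j)$, which needs the digit decomposition written in the right order (least significant part $c'$), and ensuring $q\ge1$. Neither is a real obstacle. The hypothesis that $a$ occurs in $u$ is not needed for this argument.
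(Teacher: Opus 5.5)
Your proof is correct and follows the paper's argument. Your $T_j$ are the paper's $\mathcal{K}_j$, and the recursion $T_{j+1}=\Phi(T_j)$ via the digit split $c=c'+k^ji$ and the pigeonhole over $T_0,\dots,T_{2^N}$ are the same steps. The one difference is how you get $q\ge1$: the paper rules out $q=0$ afterwards because $a$ would then never occur, while you choose an aligned block with $q\ge1$ directly, using $[k^m,2k^m)$ when the interval starts at $0$, which indeed makes the hypothesis that $a$ occurs unnecessary.
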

\begin{proof}
An interval of length $2k^m$ contains an interval $[qk^m,(q+1)k^m)$ with $q\ge0$. For a sequence
$v$ in the $k$-kernel $\mathcal{K}$ of $u$ and a digit $b<k$, the sequence $n\mapsto v(kn+b)$ lies in $\mathcal{K}$.
Let $\mathcal{K}_i\subseteq \mathcal{K}$ be the set of the sequences $n\mapsto u(k^in+b)$ with $0\le b<k^i$. Writing
$b=k^ib_1+b'$ with $b_1<k$ and $b'<k^i$ shows that $\mathcal{K}_{i+1}$ consists of the sequences
$n\mapsto v(kn+b_1)$ with $v\in \mathcal{K}_i$ and $b_1<k$, so $\mathcal{K}_{i+1}$ is determined by $\mathcal{K}_i$. Among the
$2^N+1$ sets $\mathcal{K}_0,\dots,\mathcal{K}_{2^N}$ two coincide, say $\mathcal{K}_i=\mathcal{K}_{i+p}$ with $i+p\le2^N\le m$, and then
$\mathcal{K}_{m+tp}=\mathcal{K}_m$ for all $t\ge0$. By assumption $v(q)\ne a$ for all $v\in \mathcal{K}_m$, hence for all
$v\in \mathcal{K}_{m+tp}$, which is the claim. If $q=0$, the claim says that $u(n)\ne a$ for all
$n<k^{m+tp}$ and all $t$, so $a$ would not occur in $u$. Hence $q\ge1$.
\end{proof}

\begin{proof}[Proof of Theorem~\ref{thm:nonauto}]
Suppose that $t$ is $k$-automatic. By Lemma~\ref{lem:line} the sequence
$u(x)=t(x,x+1)=T\cl{2x+1}1$ is $k$-automatic. By Theorem~\ref{thm:main}, $T\cl s1=0$ exactly when
$s=3$, by rule~\ref{D0}, or when $\cl s1$ lies on the border of a box of $\cB$, by rule~\ref{D1},
since the other rules give the values $1$ and $2$ on odd rows. The cells $\cl s1$ on borders are
the cells of the edges, because the walls $|d|=1$ belong to the boxes of level $1$, whose rows are
their two edges and the row between them. Inside the main box $S_n$ two consecutive edge times
differ by at most $4n+2$, which is the length of the switching gap of $S_n$, and consecutive main
boxes are $4$ rows apart. The initial runs at $x=0$ and $x=2,3$ are finite exceptions.
Every later maximal run of terms of $u$ different from $0$, starting at $x_0\ge5$,
lies between two consecutive edge times $s'=2x_0-1$ and $s''$, and its length is
$(s''-s')/2-1\le2n$, where $n$ is the level of the main box that contains $s'$ or ends just before
it. Since $e_n=2^{n+2}-2n+3>2^{n+1}$, we have $n<\log_2s'$, and the run has length less than
$2\log_2(2x_0)$. On the other hand the switching gap of $S_n$ gives a run of length $2n$, so the
runs are unbounded. Lemma~\ref{lem:gaps}, applied with $a=0$, gives runs of length $k^{m+tp}$
starting at $qk^{m+tp}$ for all $t\ge0$, which contradicts the bound $2\log_2(2qk^{m+tp})$ for
large $t$. Hence $t$ is not $k$-automatic.

For $\tau$ we use the letter $3$ on the same line. By Theorem~\ref{thm:four}(a),
the cells $\tau\cl s{\pm1}$ with $s\ge56$ are green at offset $12$ in each box
of level $1$ and $12$ rows after each middle block begins. These are the only green
entries on $|d|=1$ in the tables $P_1$ and $\mathcal{M}$. None of the other description rules
gives a green cell on this line: borders, transition rows and junction rows give
$0$ or $1$, rules (iv) and (F6) concern even columns, $I$ takes the values $2,0,0$
on the wall column $a=1$, and $\Lam$ is white on $|d|=1$.

The middle block of a box of level $n\ge2$ starts at offset $c_{\mathrm{mid}}=\ell_n/2-7$.
Thus every box of level $n\ge1$ has a green cell on $d=1$ at offset
$\ell_n/2+5$, five rows after the middle of the box. The same statement holds
for each virtual box $V_j$ at its level.
Induction on $n$, using the three sub-boxes, shows that each row of a box of level
$n\ge1$ lies within $4n+12$ rows of a green cell on $d=1$ inside the box.
Consecutive main boxes are at most $12$ rows apart. The main boxes that meet the
rows up to $s$ have levels at most $\log_3s+1$: in block $j$, the row index satisfies
$s\ge E_j-1\ge30\cdot27^j$ and the levels are at most $3j+4$.
Consequently a maximal run of terms of $u(x)=\tau(x,x+1)$ different from $3$,
starting at a sufficiently large $x_0$, has length at most $C\log x_0$ for a constant $C$.

These runs are unbounded. In a box of level $n\ge3$, consider the last row with
a green cell on $d=1$ in its first child and the first such row in its second child.
The two rows are $8n-2$ rows apart, with no green cell on $d=1$ between them.
Lemma~\ref{lem:gaps}, applied with $a=3$, contradicts the logarithmic bound as in the
case of $t$.

Finally consider $v(x)=\rho(x,x+1)=1-U\cl{2x+1}1$, and use the red letter $1$.
By Theorem~\ref{thm:three-initial}, its occurrences are exactly the intersections with
the borders of odd-level boxes: the other zero cells of $U$ have $d=0$, even-level
edges have even time, and the wall $d=1$ of a level-$1$ box consists only of its two
edge cells.

For a box of level $n$, let $Z_n$ be the relative times of these red cells on $d=1$.
Then $Z_1=\{0,2\}$, $Z_2=\varnothing$ and $Z_3=\{0,14\}$. For $n\ge3$, the first
red time and the distance of the last red time from $b_n$ are both
\[
a_n=\begin{cases}0,&n\text{ odd},\\3,&n\text{ even},\end{cases}
\]
and the largest difference between consecutive red times is $2n+8$.
To prove these assertions, use the child list of Section~\ref{sec:three-initial}.
For odd $n$, the red cells occur on both edges of the parent. For even $n\ge4$,
the first and last children have odd level and lie $3$ rows from the parent's edges.
Within either half, consecutive
children have successive levels. Their nearest red cells are $6$ rows apart, except
when the empty set $Z_2$ is skipped: the level-$2$ duration $b_2=6$ and its two
neighbouring gaps of $3$ give $12$ rows between the red cells of levels $3$ and $1$.
For an odd-level parent, the gap from its opening to its first red child cell is $12$
at level $5$ and $6$ at odd levels at least $7$, and closing is symmetric.
The two central level-$1$ children, for $n\ge4$, have red edges separated by exactly
$2n+8$, with no red cell between them. This gap exceeds the other gaps and, by induction,
all gaps inside a child. The case $n=3$ follows from $Z_3$ directly.

Consequently $C_n$, for $n\ge3$, contains a run of exactly $n+3$ non-red terms of $v$,
and no longer run between its red cells. Between $C_n$ and $C_{n+1}$, for $n\ge3$,
the nearest red times are $a_n+3+a_{n+1}=6$ rows apart. The initial level-$2$ main box
has no red cell on this line. The surrounding red times $7$ and $19$ give a single
finite exception of $12$ rows.

Since the durations increase, $b_n\ge3b_{n-3}$ for $n\ge4$, and hence
\[
b_m\ge2\cdot3^{\lfloor(m-1)/3\rfloor},\qquad
c_n\ge b_{n-1}\ge2\cdot3^{(n-4)/3}\quad(n\ge2).
\]
A maximal non-red run starting at $x_0\ge10$ has its preceding red cell at time
$s'=2x_0-1$ in a main box $C_n$, with $n\ge3$ and $c_n\le s'$. Its length is at most
\[
n+3\le3\log_3(s'/2)+7<3\log_3x_0+7.
\]
Thus the non-red runs are unbounded, but their lengths are $O(\log(x_0+2))$.
If $\rho$ were $k$-automatic, Lemma~\ref{lem:line} would make $v$ $k$-automatic.
Lemma~\ref{lem:gaps}, with $a=1$, would then give non-red intervals of length
$k^{m+tp}$ starting at $qk^{m+tp}$ for all $t\ge0$. Extend each to a maximal run:
its starting point is no larger, so its length still has a logarithmic upper bound
in $qk^{m+tp}$. This contradicts the displayed interval lengths for large $t$.
\end{proof}

\section{Computer verification}\label{sec:computer}

The computations below test the recurrence, geometry and counting formulas
against the explicit descriptions. Verification code, finite tables and
computation logs accompany the paper.\footnote{\href{https://github.com/bogdan27182/paper-vn}{GitHub repository}.}

The proofs in Sections~\ref{sec:proof}--\ref{sec:triples} are independent of these
computations. For three initial symbols, Lemma~\ref{lem:three-initial-local}
reduces the recurrence check to a finite catalogue, whose verification is part
of the proof. Claim 12 gives additional checks on larger regions. For four
letters, Theorem~\ref{thm:four}(a) and Corollary~\ref{cor:four-triples} use claim 6,
while Theorem~\ref{thm:four}(b) uses claims 8 and 14 near the origin.
The exhaustive search for Theorem~\ref{thm:three-none} is specified in its proof.
The non-automaticity proofs in Section~\ref{sec:nonauto} follow from the
descriptions without further computation.
\begin{enumerate}
\item The description $D$ of Definition~\ref{def:D}, implemented without the recurrence, agrees
with $t$, computed from the recurrence, at all $2^{34}$ cells $(x,y)$ with $0\le x,y<2^{17}$.
This range contains the main boxes $S_1,\dots,S_{15}$ completely.
\item $D$ is correct, in the sense of Section~\ref{sec:rot}, at all $8.5\cdot10^9$ cells of the
band $|d|\le w_{25}+8$ with $x+y<1.35\cdot10^8$, which contains $S_1,\dots,S_{24}$ completely.
\item The set of triples of Theorem~\ref{thm:triples}(a) and the occurrence statements of
(c) were confirmed on the same $2^{17}\times2^{17}$ square.
\item The coordinate formulas of Theorem~\ref{thm:structure}, the structure of the transition
squares $T_1,\dots,T_9$ including the runs of the staircases, and the structure of the $162$
transition squares and $170$ switching squares inside the ordinals up to $S_9$ were checked cell
by cell.
\item The classification of Lemma~\ref{lem:Eclasses} was checked on all cells with $x+y<2400$:
every environment cell with an irregular parent or on the diagonal, $12\,709$ cells in total,
belongs to one of the classes (E1) to (E6).
\item $\Delta$ is correct at all $55\,492\,541$ cells with $|d|\le s-2$ of the region $\mathcal{R}$ of
Lemma~\ref{lem:windows}, and the argument triples of $g$ at these cells were recorded.
The computation used checked integer initialization and undefined-behaviour sanitization
(Proposition~\ref{prop:four-machine}(a),(b)).
\item $\Delta$ agrees with $\tau$, computed from the recurrence, at all cells with $x,y<40000$
(Proposition~\ref{prop:four-machine}(c)).
\item The white-bordered rectangles of $\Delta$ centred on the diagonal, with a non-white cell
strictly inside each row between their edges, with $s<2\cdot10^5$ and half-width at most $100$,
are exactly the $33\,327$ boxes of $\mathcal C$ in this range. This range contains all ordinals
whose top row lies in $s<58$.
\item Proposition~\ref{prop:triple} was checked cell by cell for $2\le n\le11$, and
Proposition~\ref{prop:diagonal}(a),(b) for $0\le j\le3$. The facts (i) and (ii) of
Section~\ref{sec:four-bc} were checked at all cells with $x,y\ge16$, $56\le s<2\cdot10^6$ and
$|d|\le200$.
\item On the line $y=x+1$, for $x<4095$, the white cells of $t$ are the intersections
with edges of boxes of $\cB$ and the cell $(1,2)$.
On the same line, the green cells of $\tau$ with $s\ge56$ occur on the rows used in
the proof of Theorem~\ref{thm:nonauto}. This was checked for $x<11999$ using the
recurrence and for $s<5\cdot10^7$ using $\Delta$.
\item For $\tau$, the ordinals contained in no other ordinal were enumerated on the
anti-diagonals $s<3\cdot10^7$ in the band $|d|\le100$. Cells outside the band were
taken from $\Lam$. The levels and positions of the enumerated ordinals agree with
Theorem~\ref{thm:four}(c) up to the box of level $13$.
\item For the three-initial-symbol sequence, $U$ agrees with $u=1-\rho$, computed from
the recurrence, on all cells with $x,y<2048$. The local recurrence for $U$ was also checked
for $2\le s<60000$ and $|d|\le44$, using symmetry. In a $1536\times1536$ recurrence grid,
all $276$ zero-bordered rectangles with half-width at most $24$ are exactly the boxes
of $\mathcal H$ that fit in the grid. Their counts inside $C_1,\dots,C_{11}$ agree with
Corollary~\ref{cor:three-count}.
\item The ordinary--virtual--ordinary assembly of Proposition~\ref{prop:triple}, including
the level-$0$ exception, was checked in both $P_n$ and $Q_n$ for $1\le n\le8$ on
$1\,622\,784$ constituent cells. The identity
$\Delta\cl sd=Q_{3j+3}(s-E_j+4,d)$ was checked on all $4\,542\,171$ cells of
$V_0,V_1,V_2$.
\item \label{check:arithmetic-and-boundaries}The arithmetic formula of Corollary~\ref{cor:three-count} agrees with its recurrence
for $1\le n\le200$. For levels $1$ through $18$, the positions of the red cells on
$d=1$ agree with the odd-level edge description in the non-automaticity proof, including
the largest time gap $2n+8$ for $n\ge3$. The $233$ red cells of $\rho(x,x+1)$ in a
$1024\times1024$ recurrence grid also agree with this description. For the four-letter
sequence, all $53$ centred white opening segments with $1\le s<58$ have their first
non-white wall cell by time $111$, which justifies the finite-origin cutoff in part (b).
The two separator cells of Lemma~\ref{lem:four-margins} were checked for
$58\le s<613\,224$, and the $204\,387$ edge times in that range all have non-white
cells immediately beyond their widest edge.
\end{enumerate}

The finite catalogue in Lemma~\ref{lem:three-initial-local} contains $7930$
horizontal-edge windows, $1570$ wall windows and $858$ central windows,
including repeated value quadruples from different contexts. Every window
satisfies the local recurrence.

The maps in Lemma~\ref{lem:windows} were checked against the cell definitions
on all $123\,540$ frame windows at levels $6$ through $8$, on representatives
at levels $9$ through $30$, and on $8190$ representatives near the block
boundaries in blocks $3$ through $9$. These finite checks supplement the proof
that the maps preserve values at arbitrary levels.

For the search in Theorem~\ref{thm:three-none}, the rectangle detector was
checked against an independent direct enumeration. As a positive control, it
accepts the level-$2$ box of the four-letter sequence, together with its eight
descendants.

\appendix
\section{The finite tables of the description \texorpdfstring{$\Delta$}{Delta}}\label{app:tables}

The tables used in Section~\ref{sec:four-desc} follow. Their entries were read from
$\tau$, computed by the recurrence. In $P_0$ and $P_1$, the row index $r$ is the
offset from the opening edge. In $\mathcal{M}$, rows are indexed from the closing offset $c_{\mathrm{mid}}$
of the preceding child of level $1$. The columns give positions $d$ from $-v$ to $v$.
A dot marks a position of the wrong parity, which carries no cell.

The pattern $P_0$, rows $r=0,1,2$, columns $d=-1,0,1$:
\begin{verbatim}
0.0
.1.
0.0
\end{verbatim}
The pattern $P_1$, rows $r=0,\dots,14$, columns $d=-5,\dots,5$:
\begin{verbatim}
 0  0.0.0.0.0.0
 1  .1.1.1.1.1.
 2  0.1.1.1.1.0
 3  .2.0.0.0.2.
 4  0.2.0.0.2.0
 5  .3.2.1.2.3.
 6  0.3.0.0.3.0
 7  .2.1.0.1.2.
 8  0.1.0.0.1.0
 9  .2.2.1.2.2.
10  0.2.0.0.2.0
11  .3.3.0.3.3.
12  0.3.3.3.3.0
13  .2.0.2.0.2.
14  0.0.0.0.0.0
\end{verbatim}
The middle block $\mathcal{M}$, rows $r-c_{\mathrm{mid}}=0,\dots,14$, columns $d=-5,\dots,5$:
\begin{verbatim}
 0  0.0.0.0.0.0
 1  .1.1.1.1.1.
 2  0.1.1.1.1.0
 3  .2.0.0.0.2.
 4  0.2.0.0.2.0
 5  .3.2.1.2.3.
 6  2.3.0.0.3.2
 7  .2.1.0.1.2.
 8  0.1.0.0.1.0
 9  .2.2.1.2.2.
10  0.2.0.0.2.0
11  .3.3.0.3.3.
12  2.3.3.3.3.2
13  .2.0.2.0.2.
14  0.0.0.0.0.0
\end{verbatim}
The strip $x\le13$: each row gives $x$, the threshold $y_0(x)$ and the values
$\tau(x,24),\dots,\tau(x,35)$. For $y\ge y_0(x)$ the values repeat with period $12$, that is,
$\tau(x,y)=\tau\big(x,\,24+((y-24)\bmod12)\big)$. The strip $y\le13$ is symmetric.
\begin{verbatim}
 x  y0
 0   0  111111111111
 1   0  102102102102
 2   1  220220220220
 3   3  031031031031
 4   3  203321203321
 5   4  232213011002
 6   2  332020110010
 7   4  021201100112
 8   4  302231001102
 9   4  320330011002
10   4  112020110010
11  12  022301100112
12  13  303320222022
13  18  320212033103
\end{verbatim}
The corner: $\tau(x,y)$ for $0\le x,y<20$, with $x$ the row and $y$ the column.
\begin{verbatim}
 0  11111111111111111111
 1  10210210210210210210
 2  12022022022022022022
 3  11203103103103103103
 4  10232120332120332120
 5  12011301100223221301
 6  11202011001033202011
 7  10230110011202120110
 8  12013100110230223100
 9  11203001100232033001
10  10232011001011202011
11  12011202220002230110
12  11202230331010332022
13  10230332021200300103
14  12013221202233201120
15  11203202230330011023
16  10232120332020110023
17  12011301100101100101
18  11202011001120222000
19  10230110011023033101
\end{verbatim}
The initial segment: $\tau\cl sd$ for $28\le s\le55$, each row listing the positions $d=0$ or
$d=1$, according to the parity of $s$, up to $d=s-28$. The values at negative $d$ follow by
symmetry, and the cells with $s<56$ that are not listed lie in the corner or in the strips.
\begin{verbatim}
28  2
29  0
30  11
31  11
32  002
33  020
34  1232
35  0301
36  01222
37  01020
38  122332
39  020301
40  0332222
41  3301020
42  20222332
43  00020302
44  111332233
45  111301032
46  0000222222
47  0000020101
48  11111332222
49  11111302020
50  000000233332
51  000000032301
52  1111111222222
53  1111110101020
54  00000022222332
55  00000202020302
\end{verbatim} 

\section*{Acknowledgments}

This paper was produced in an AI-assisted workflow. AI assistants helped
explore candidate constructions, draft proofs, and write software for numerical experiments.

\end{document}